\documentclass[reqno,10pt]{amsart}
\usepackage{amsfonts}
\usepackage{a4wide}


\usepackage{amsmath,amssymb,amsthm,amsfonts}
\usepackage{mathrsfs}
\usepackage{bbm}
\usepackage{hyperref}

\usepackage{color}

\newtheorem{lemma}{Lemma}[section]
\newtheorem{theorem}{Theorem}[section]

\newtheorem{remark}{Remark}[section]

\numberwithin{equation}{section}

\arraycolsep=1.5pt

\newcommand{\dis}{\displaystyle}

\newcommand{\R}{\mathbb{R}}

\newcommand{\Z}{\mathbb{Z}}

\renewcommand{\S}{\mathbb{S}}
\newcommand{\T}{\mathbb{T}}


\newcommand{\FH}{\mathbf{H}}

\newcommand{\FP}{\mathbf{P}}

\newcommand{\FX}{\mathbf{X}}
\newcommand{\FY}{\mathbf{Y}}

\newcommand{\Fb}{\mathbf{b}}
\newcommand{\Fk}{\mathbf{k}}

\newcommand{\SL}{\mathscr{L}}

\newcommand{\CA}{\mathcal{A}}

\newcommand{\CC}{\mathcal{C}}
\newcommand{\CE}{\mathcal{E}}
\newcommand{\CF}{\mathcal{F}}
\newcommand{\CG}{\mathcal{G}}
\newcommand{\CH}{\mathcal{H}}
\newcommand{\CI}{\mathcal{I}}

\newcommand{\CK}{\mathcal{K}}
\newcommand{\CL}{\mathcal{L}}
\newcommand{\CN}{\mathcal{N}}
\newcommand{\CM}{\mathcal{M}}

\newcommand{\CS}{\mathcal{S}}

\newcommand{\RA}{\mathscr{A}}
\newcommand{\RB}{\mathscr{B}}

\newcommand{\RH}{\mathfrak{H}}
\newcommand{\RI}{\mathfrak{I}}

\newcommand{\RR}{\mathfrak{R}}

\newcommand{\RT}{\mathfrak{T}}

\newcommand{\na}{\nabla}

\newcommand{\al}{\alpha}
\newcommand{\bet}{\beta}
\newcommand{\ga}{\gamma}
\newcommand{\om}{\omega}
\newcommand{\Om}{\Omega}
\newcommand{\la}{\lambda}
\newcommand{\de}{\delta}
\newcommand{\si}{\sigma}
\newcommand{\pa}{\partial}
\newcommand{\ka}{\kappa}
\newcommand{\eps}{\epsilon}

\newcommand{\ta}{\theta}
\newcommand{\vth}{\vartheta}

\newcommand{\vps}{\varepsilon}

\newcommand{\ze}{\zeta}

\newcommand{\Ga}{\Gamma}

\newcommand{\lag}{\langle}
\newcommand{\rag}{\rangle}

\newcommand{\eqdef}{\overset{\mbox{\tiny{def}}}{=}}

\usepackage{cite}




\makeatletter
\@namedef{subjclassname@2020}{%
  \textup{2020} Mathematics Subject Classification}
\makeatother

\begin{document}

\title[Solutions to the thermostated Boltzmann equation with deformation]{On smooth solutions to the thermostated Boltzmann equation with deformation}

\author[R.-J. Duan]{Renjun Duan}
\address[RJD]{Department of Mathematics, The Chinese University of Hong Kong,
Shatin, Hong Kong, P.R.~China}
\email{rjduan@math.cuhk.edu.hk}

\author[S.-Q. Liu]{Shuangqian Liu}
\address[SQL]{School of Mathematics and Statistics, Central China Normal University, Wuhan 430079, P.R.~China}
\email{tsqliu@jnu.edu.cn}

\begin{abstract}
This paper concerns a kinetic model of the thermostated Boltzmann equation with a linear deformation force described by a constant matrix. The collision kernel under consideration includes both the Maxwell molecule and general hard potentials with angular cutoff.  We construct the smooth steady solutions  via a perturbation approach when the deformation strength is sufficiently small. The steady solution is a spatially homogeneous non Maxwellian state  and may have the polynomial tail at large velocities. Moreover, we also establish the long time asymptotics toward steady states for the Cauchy problem on the corresponding spatially inhomogeneous equation in torus, which in turn gives the non-negativity of steady solutions.
\end{abstract}


\subjclass[2020]{35Q20, 35B40}


\keywords{Boltzmann equation, deformation force, thermostated force, non-equilibrium steady state, asymptotic stability}

\maketitle

%



\section{Intoduction}

The homoenergetic solutions to the Boltzmann equation were first introduced by Galkin \cite{G1} and Truesdell \cite{T} independently at almost the same time. These prototypical solutions not only indicate the existence of invariant manifolds of molecular dynamics but also give a new insight into the relation between atomic forces and nonequilibrium behavior of the gas. Recently, James-Nota-Vel\'azquez \cite{JNV-ARMA,JNV-JNS,JNV19} and  Bobylev-Nota-Vel\'azquez \cite{BNV-2019} provided the systematic mathematical study of the subject. Motivated by those works, the authors of this paper \cite{DL-2020} also considered the smoothness and asymptotic stability of self-similar solutions to the Boltzmann equation for the uniform shear flow in case of the Maxwell molecule. In the non Maxwell molecule case, for instance, for the hard potentials, the problem is more subtle to treat and still remains largely open, because the temperature of system increases only in a polynomial rate depending on the collision kernel and the shear rate in the rescaled equation is no longer a constant but a time-dependent function, see the conjecture in \cite{JNV-ARMA} for details.

On the other hand, instead of studying the uniform shear flow as a time-dependent state due to the viscous heating, it is also usual to introduce non-conservative external forces to compensate exactly for the viscous increase of temperature and achieve a steady state. This kind of force is referred to as thermostats and a typical choice of the thermostat force is the friction $-\beta v$ with a constant $\beta\in \R$, see \cite[Chapter 3.4]{GaSa}. Inspired by this, we are concerned in this paper with the spatially homogeneous steady problem on the thermostated Boltzmann equation with a  deformation force:
\begin{align}\label{G-st}
-\beta\na_v\cdot(vG_{st})-\al\na_v\cdot(AvG_{st})=Q(G_{st},G_{st}).
\end{align}
Here, the unknown $G_{st}=G_{st}(v)$ denotes the non-negative velocity distribution function of particles with velocity $v\in \R^3$. The matrix $A=(a_{ij})\in M_{3\times 3}(\R)$ induce a deformation force $-\al A v$ with the strength given by the parameter $\al>0$ and the constant $\beta\in \R$ is a parameter standing for the strength of the thermostated force. The nonlinear term  $Q(\cdot,\cdot)$ is the collision operator defined as
\begin{equation}\label{Q-op}
Q(F_1,F_2)=\int_{\R^3}\int_{\S^2}B(\om,v-v_\ast)[F_1(v_\ast')F_2(v')-F_1(v_\ast)F_2(v)]\,d\omega dv_\ast,
\end{equation}
where we have denoted $v'=v+[(v_\ast-v)\cdot\omega]\om$ and $v_\ast'=v_\ast-[(v_\ast-v)\cdot\omega]\om$ with $\omega\in \S^2$ in terms of the conservation laws $v_\ast+v=v_\ast'+v'$ and $|v_\ast|^2+|v|^2=|v_\ast'|^2+|v'|^2$. Throughout this paper, we let 
\begin{eqnarray}
&\dis B(\om,v-v_\ast)=|v-v_\ast|^\ga B_0(\cos\ta),\ \cos\ta=\om\cdot\frac{v-v_\ast}{|v-v_\ast|},\ \om\in\S^2,\label{hard-sp}\\
&\dis 0\leq\ga\leq1,\ 0\leq B_0(\ta)\leq C|\cos\ta|.\label{hd-po}
\end{eqnarray}
This includes the cases of the Maxwell molecule $\gamma=0$ and general hard potentials $0<\gamma\leq 1$ under the Grad's angular cutoff assumption.

To consider \eqref{G-st}, we supplement it with the restriction condition that
\begin{equation}
\label{con.ge}
\int_{\R^3}[1,v,|v|^2]G_{st}(v)\,dv=[1,0,3].
\end{equation}
The steady problem \eqref{G-st} is solvable only if its left hand term is microscopic, namely,
\begin{equation*}
\int_{\R^3}[1,v,|v|^2]\{-\bet \na_v\cdot(v G_{st})-\al \na_v\cdot(AvG_{st})\}\,dv=0. 
\end{equation*}
This together with \eqref{con.ge} implies that
\begin{equation}
\label{def.b.in}
\beta=-\al \frac{\int_{\R^3} v\cdot (Av)G_{st}\,dv}{\int_{\R^3} |v|^2G_{st}\,dv}=-\frac{\al}{3}\int_{\R^3} v\cdot (Av)G_{st}\,dv.
\end{equation}
Plugging this back to \eqref{G-st} gives
\begin{align}\label{G-st.f}
\frac{1}{3}\int_{\R^3} v\cdot (Av)G_{st}\,dv\na_v\cdot(vG_{st})-\na_v\cdot(AvG_{st})=\frac{1}{\al}Q(G_{st},G_{st}).
\end{align}
From \eqref{G-st.f}, the deformation strength $\al>0$ plays the same role as the Knudsen number, and we then expect to adopt the perturbation approach as in \cite{DL-2020} to construct smooth solutions for any small $\al>0$.

To present the main results of this paper, we first introduce some notations. To the end, associated with the condition \eqref{con.ge}, we define the reference global Maxwellian $\mu$ by
\begin{equation}
\label{def.mu}
\mu=(2\pi)^{-\frac{3}{2}}e^{-|v|^2/2},
\end{equation}
and use
a velocity weight function
\begin{equation}
\label{def.vwwq}
w_l=w_l(v):=(1+|v|^2)^l
\end{equation}
with an integer $l>0$. Let $\ze=(\ze_1,\ze_2,\ze_3)$ and $\vth=(\vth_1,\vth_2,\vth_3)$ be multi-indices with length $|\ze|$ and $|\vth|$, respectively, and denote $\pa_\ze=\pa_v^\ze=\pa_{v_1}^{\ze_1}\pa_{v_2}^{\ze_2}\pa_{v_3}^{\ze_3}$, $\pa^\vth=\pa_x^\vth=\pa_{x_1}^{\vth_1}\pa_{x_2}^{\vth_2}\pa_{x_3}^{\vth_3}$ and  $\pa_\ze^\vth=\pa_v^\ze\pa_x^\vth$ for simplicity. We define $\ze'\leq\ze$ if each component of $\ze'$ is not greater than the one of $\ze$, and  write $\ze'<\ze$ in case of $\ze'\leq\ze$ and $|\ze'|<|\ze|.$
We also let $C_{\vth}^{\vth'}$ be the usual binomial coefficient for two  multi-indices $\vth$ and $\vth'$ with $\vth'\leq \vth$.

The first result in the paper is to establish the existence of smooth solutions to the steady problem \eqref{G-st} and \eqref{con.ge} for the steady state $G_{st}$ with $\beta$ given by \eqref{def.b.in} .

\begin{theorem}\label{st.sol}
Assume \eqref{hard-sp} and \eqref{hd-po} for the collision kernel. Let $A=(a_{ij})\in M_{3\times 3}(\R)$ be a non scalar matrix and
\begin{equation}
\label{def.G1}
G_1=-\sum_{i,j=1}^3a_{ij}L^{-1}\left\{(v_iv_j-\frac{1}{3}\delta_{ij}|v|^2)\mu^{\frac{1}{2}}\right\},
\end{equation}
where $L$ is the linearized collision operator as in \eqref{def.L}. Then, there is an integer $l_0>0$ such that for any integer $l\geq l_0$,
there is $\al_0=\al_0(l)>0$ depending on $l$ such that for any $\al\in (0,\al_0)$, the steady problem \eqref{G-st} and \eqref{def.b.in} under the condition \eqref{con.ge} admits a unique non-negative smooth solution $G_{st}=G_{st}(v)\in C^\infty (\R^3)$ of the form
\begin{equation}
\label{def.spert}
G_{st}=\mu+\al \mu^{\frac{1}{2}} G_1+\al^2 \widetilde{G}_R,
\end{equation}
satisfying that $\int_{\R^3}[1,v,|v|^2]\widetilde{G}_R\,dv=0$ and for any integer $m\geq 0$,
\begin{equation}
\label{st.sole2}
\sum\limits_{|\ze|\leq m}\|w_l\pa_{\ze}\widetilde{G}_R\|_{L^\infty}\leq \tilde{C}_{m,l},
\end{equation}
where $\tilde{C}_{m,l}>0$ is a constant depending only on $m$ and $l$ but not on $\al$ and $w_l$ is given in \eqref{def.vwwq}.
\end{theorem}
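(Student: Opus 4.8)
The plan is to set up a fixed-point / implicit-function argument for the remainder $\widetilde G_R$ after peeling off the first two terms of the expansion \eqref{def.spert}. First I would substitute $G_{st}=\mu+\al\mu^{1/2}G_1+\al^2\widetilde G_R$ into \eqref{G-st.f} and collect powers of $\al$. The $O(1)$ terms vanish because $Q(\mu,\mu)=0$. The $O(\al)$ terms force the identity $L(\mu^{-1/2}\cdot)$ applied to $\mu^{1/2}G_1$ to match $-\na_v\cdot(Av\mu)$; since $\na_v\cdot(Av\mu)=\sum_{ij}a_{ij}(v_iv_j-\frac13\de_{ij}|v|^2)\mu+(\mathrm{tr}\,A)(\cdots)$ and the trace part can be absorbed using $\beta$, the definition \eqref{def.G1} via $L^{-1}$ is exactly what makes this order balance — here one uses that $(v_iv_j-\frac13\de_{ij}|v|^2)\mu^{1/2}$ lies in $(\ker L)^\perp$ so the pseudo-inverse is well defined. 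What remains is an equation for $\widetilde G_R$ of schematic form
\begin{equation*}
L(\mu^{-1/2}\widetilde G_R)=\text{(linear drift/force terms)}\cdot\al+\al\, \Gamma(\widetilde G_R,\widetilde G_R)+(\text{source independent of }\widetilde G_R),
\end{equation*}
where $\Gamma$ is the symmetrized bilinear collision term and the drift terms involve $\na_v\cdot(v\,\cdot)$, $\na_v\cdot(Av\,\cdot)$ acting on $\widetilde G_R$, plus the nonlocal coupling through $\beta$ given by \eqref{def.b.in}.

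Next I would reformulate this as $\widetilde G_R=\mathcal N(\widetilde G_R)$ with $\mathcal N$ built from $L^{-1}$ composed with the right-hand side, and run a contraction mapping in the weighted space defined by the norm in \eqref{st.sole2} — first at the level $m=0$ (i.e. $\|w_l\widetilde G_R\|_{L^\infty}$ bounded), then propagating to all $m$ by differentiating the equation in $v$. The essential analytic inputs are: (i) the coercivity/invertibility of $L$ on the orthogonal complement of its kernel, together with $L^{-1}$ mapping weighted $L^\infty$ spaces to themselves with weight gain controlled by the hard-potential exponent $\gamma$; (ii) the bilinear estimate $\|w_l\Gamma(f,g)\|_{L^\infty}\lesssim \|w_l f\|_{L^\infty}\|w_l g\|_{L^\infty}$ in the same weighted spaces; and (iii) control of the deformation drift term $\na_v\cdot(Av\,\cdot)$, which costs one velocity derivative and one power of $|v|$ — this is why one needs the full scale of $m$-derivative estimates simultaneously, and why the weight index $l$ must be taken large enough ($l\ge l_0$) to absorb the extra $|v|$ growth produced by the force. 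The polynomial-tail feature of $G_{st}$ is exactly the statement that only polynomial (not Gaussian) weights are available here, because the force $-\al Av$ does not respect the Gaussian decay.

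The smallness of $\al$ is then used twice: to make the linear drift perturbation of $L$ small (so that $L+\al(\text{drift})$ remains invertible with uniform bounds) and to make the Lipschitz constant of the nonlinear map strictly less than one. Closing the estimates for all $m\ge 0$ with constants $\tilde C_{m,l}$ independent of $\al$ follows from an induction on $m$: differentiating the equation $\pa_\ze$ times produces lower-order terms (already bounded by induction) plus the top-order term $L^{-1}$ applied to commutators, and the smallness of $\al$ again closes the top order. Finally, uniqueness among solutions of the form \eqref{def.spert} is immediate from the contraction, and the conditions $\int[1,v,|v|^2]\widetilde G_R\,dv=0$ are preserved because the source and the drift terms are microscopic by construction — $\beta$ being chosen precisely through \eqref{def.b.in} to enforce this.

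\medskip
\noindent\emph{Main obstacle.} The hardest part is the coupling between the loss of velocity decay and the loss of a derivative caused by the deformation term $\al\na_v\cdot(Av\,\cdot)$: unlike the pure thermostat friction $-\beta v$, a genuinely non-scalar $A$ cannot be removed by a rescaling, so one cannot reduce to a self-similar constant-coefficient problem. One must run the weighted $L^\infty$ estimates for all orders $m$ at once, tracking how each $v$-derivative and each extra power of $|v|$ is compensated by the fixed weight $w_l$, and verifying that the threshold $l_0$ can be chosen uniformly. I expect this bookkeeping — rather than any single inequality — to be the technical heart of the argument; the collision-operator estimates (i)–(ii) are by now standard for hard potentials with cutoff and can be quoted.
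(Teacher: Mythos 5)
There is a genuine gap at the heart of your scheme. You propose to invert $L$ (plus a small drift) and contract in a single weighted space $\|w_l\cdot\|_{L^\infty}$, asserting that the extra power of $|v|$ produced by the deformation force can be ``absorbed'' by taking the weight index $l$ large. It cannot. Writing out $\mu^{-1/2}\na_v\cdot(Av\sqrt{\mu}G_R)=\na_v\cdot(AvG_R)-\tfrac12 v\cdot(Av)G_R$, the remainder equation contains the term $\tfrac{\al}{2}v\cdot(Av)G_R$, which grows like $|v|^2$ relative to $G_R$. A polynomial weight commutes with multiplication by $|v|^2$, so no choice of $l$ helps; and the only dissipation available from $L$ is $\nu(v)\sim(1+|v|)^{\ga}$ with $\ga\le 1<2$, so $\nu^{-1}\cdot\al|v|^2$ is \emph{not} small uniformly in $v$ for any fixed $\al>0$. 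Consequently the map $\mathcal N=L^{-1}\circ(\text{RHS})$ does not send a ball of the weighted space into itself, and the contraction does not close. This is precisely the obstruction the paper identifies; its remedy is the Caflisch decomposition $\sqrt{\mu}G_R=G_{R,1}+\sqrt{\mu}G_{R,2}$, in which the dangerous quadratic term only ever multiplies $\sqrt{\mu}G_{R,2}$ (so the Gaussian absorbs it), while the $G_{R,1}$-equation keeps only $\nu$ plus the cutoff operator $\chi_M\CK$, whose lack of $L^2$-smallness is then recovered through an $L^\infty$--$L^2$ interplay using the decay of $\CK$ at large velocities (Lemma \ref{CK} for $\ga=0$, Lemma \ref{g-ck-lem} for $0<\ga\le1$). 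Without some such splitting your argument fails already at $m=0$.

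Two further points are treated too lightly. First, the drift $-\beta\na_v\cdot(v\,\cdot)-\al\na_v\cdot(Av\,\cdot)$ is an unbounded (first-order) perturbation of $L$, so ``$L+\al(\text{drift})$ remains invertible for small $\al$'' is not a Neumann-series statement; the paper instead integrates along the characteristics $V(s)=e^{-(s-t)(\beta I+\al A)}v$ of the transport field and exploits $\beta'\neq0$ to get decay of the mild formulation. Second, $L^{-1}$ exists only on $(\ker L)^\perp$, so solvability of each linearized step requires the source to be microscopic; the paper enforces this by an $\eps$-penalized, $\si$-parametrized iteration \eqref{gr12-ls} whose terms are arranged so that the conservation laws \eqref{con.g1r} propagate exactly. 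Your remark that the source is ``microscopic by construction'' is the right idea but is where the real bookkeeping lives, especially for hard potentials where the intermediate $\si<1$ systems lose the conservation laws.
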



Similar to \cite{DL-2020}, we point out that the obtained steady solution is a non Maxwellian state and may have the polynomial tail at large velocities, which is the main feature of the problem.  In order to justify the non-negativity of the steady solution $G_{st}$ constructed in Theorem \ref{st.sol}, we introduce a spatially inhomogeneous model in torus $\T^3=[0,2\pi]^3$:
\begin{eqnarray}
&\dis \pa_t G+v\cdot\na_xG-\beta\na_v\cdot(vG)-\al\na_v\cdot(AvG)=Q(G,G),\ t>0,\ x\in\T^3,\ v\in\R^3,\label{G-ust}\\
&\dis G(0,x,v)= G_0(x,v),\ x\in\T^3,\ v\in\R^3.
\label{G-id}
\end{eqnarray}
It turns out that the steady solution $G_{st}$ can be used to describe the large time asymptotics of the unsteady problem \eqref{G-ust} and \eqref{G-id}. We state this result as follows.

\begin{theorem}\label{ge.th}
Let $G_{st}(v)$ be the steady profile obtained in Theorem \ref{st.sol}  and the constant $\beta$ be defined in \eqref{def.b.in}.
Assume further that $\beta I+\al A$ is invertible. Then, there are constants $\la>0$ and $C>0$ independent of $\al$ such that if it holds that $G_0(x,v)\geq0$,
\begin{align}\label{id.cons}
\int_{\T^3}\int_{\R^3}[G_0(x,v)-G_{st}]\,dvdx=0,\quad \int_{\T^3}\int_{\R^3}vG_0(x,v)\,dvdx=0,
\end{align}
and
\begin{equation}\label{th.ids.p}
\sum\limits_{|\ze|+|\vth|\leq m}\left\|w_l\pa_\ze^{\vth}\left[G_0(x,v)-G_{st}(v)\right]\right\|_{L^\infty}\leq\al^2,
\end{equation}
for an integer {$m\geq 1$}, then the Cauchy problem \eqref{G-ust} and \eqref{G-id} admits a unique  global solution $G(t,x,v)\geq0$ satisfying that
\begin{equation}\label{decay}
\sum\limits_{|\ze|+|\vth|\leq m}\left\|w_l\pa_\ze^{\vth}\left[G(t,x,v)-G_{st}(v)\right]\right\|_{L^\infty}\\
\leq C \al e^{-\la \beta_1\al^2 t},
\end{equation}
for any $t\geq 0$,
where $\beta_1>0$ is a positive constant given by
\begin{equation}
\label{def.b1}
\beta_1=-\frac{1}{3}\int_{\R^3} v\cdot (Av)\left\{\mu^{\frac{1}{2}} G_1+\al \widetilde{G}_R\right\}\,dv.
\end{equation}
\end{theorem}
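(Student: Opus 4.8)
\emph{Proof sketch.} The plan is a perturbative stability analysis around $G_{st}$, carried out in the polynomial-weighted $L^\infty$ framework already in place from Theorem~\ref{st.sol}. Set $g:=G-G_{st}$ and use \eqref{G-st} to cancel the $G_{st}$-only terms, so that
\begin{align*}
\pa_t g+v\cdot\na_x g-(\beta v+\al Av)\cdot\na_v g-(3\beta+\al\,\mathrm{tr}A)g+\CL_{st}g=Q(g,g),\qquad g|_{t=0}=G_0-G_{st},
\end{align*}
with $\CL_{st}g:=-Q(G_{st},g)-Q(g,G_{st})$. By \eqref{def.spert}, $\CL_{st}=L+\al\,\CK$, where $L$ is the classical linearized operator around $\mu$ and $\CK$ is, uniformly for small $\al$, bounded in the weighted spaces by the estimates of Theorem~\ref{st.sol}; in particular the loss (multiplicative) part of $\CL_{st}$ still dominates $c(1+|v|)^\ga$. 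I would first record the balance laws for $g$: the total mass $\int_{\T^3}\!\int_{\R^3}g\,dvdx$ is conserved and vanishes initially by \eqref{id.cons}; the bulk momentum $P(t):=\int_{\T^3}\!\int_{\R^3}vg\,dvdx$ solves $\dot P=-(\beta I+\al A)P$, hence $P(t)\equiv0$ by \eqref{id.cons} and \eqref{con.ge} (the invertibility of $\beta I+\al A$ will re-enter when the momentum component is absorbed in the macroscopic estimates); and, crucially, the bulk energy $E(t):=\int_{\T^3}\!\int_{\R^3}|v|^2g\,dvdx$ satisfies, after inserting \eqref{def.spert} and \eqref{def.b.in},
\begin{align*}
\frac{d}{dt}E(t)=-2\beta_1\al^2\,E(t)-2\al\sum_{i,j=1}^3a_{ij}\int_{\T^3}\sigma_{ij}(g)(t,x)\,dx,\qquad \sigma_{ij}(g):=\int_{\R^3}\Big(v_iv_j-\tfrac13\delta_{ij}|v|^2\Big)g\,dv,
\end{align*}
where $\beta_1$ is \emph{exactly} the constant in \eqref{def.b1} (equivalently $\beta_1=(\beta+\tfrac13\al\,\mathrm{tr}A)/\al^2$), and where $\sigma_{ij}(g)=\sigma_{ij}((I-\mathbf{P})g)$ is purely microscopic, $\mathbf P$ being the hydrodynamic projection relative to $\mu$. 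This identity is the conceptual heart: the bulk energy is the single slow mode, it relaxes at rate $\beta_1\al^2$, and it is forced only by the (fast-decaying) microscopic stress; positivity $\beta_1>0$ follows from $\beta_1=\tfrac13\langle\Phi_A,L^{-1}\Phi_A\rangle+O(\al)$ with $\Phi_A=\sum a_{ij}(v_iv_j-\tfrac13\delta_{ij}|v|^2)\mu^{1/2}\in(\ker L)^\perp$ and $L^{-1}>0$ there.

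Next I would prove decay for the linearized flow via the $L^2$--$L^\infty$ interplay with polynomial weights, as in \cite{DL-2020} and the proof of Theorem~\ref{st.sol}: (i) the macro--micro splitting $g=\mathbf Pg+(I-\mathbf P)g$; (ii) a weighted $L^2_{x,v}$ energy estimate in which $L$ supplies the dissipation $\|\nu^{1/2}(I-\mathbf P)g\|_{L^2}^2$, together with Guo-type macroscopic estimates that exploit $v\cdot\na_x$ to produce dissipation for $\na_x(\mathbf Pg)$ and, using $P(t)\equiv0$ and the invertibility of $\beta I+\al A$, for all of $\mathbf Pg$ except its bulk-energy component; (iii) closing the remaining zero mode $E(t)$ at rate $\beta_1\al^2$ from the identity above, its source being the already-controlled stress; (iv) a Duhamel representation along the \emph{explicit} linear characteristics $\dot X=V$, $\dot V=-(\beta I+\al A)V$ — whose Jacobian $e^{(3\beta+\al\,\mathrm{tr}A)(s-t)}=e^{3\beta_1\al^2(s-t)}$ and weight distortion are harmless because $\beta_1\al^2\ll1$ — to upgrade $L^2$ decay to weighted $L^\infty$ decay, $w_l$ being admissible for the gain term once $l\ge l_0$ and $0\le\ga\le1$; and (v) an induction on $|\ze|+|\vth|$ to include derivatives, the $v$-derivatives producing only lower-order terms with the same bounded coefficients. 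The upshot: every mode of $g$ other than the bulk-energy mode decays at an $\al$-independent rate, while the bulk-energy mode — forced, through the $O(\al)$ deformation terms, by those fast modes and feeding them back — decays at rate $\sim\beta_1\al^2$. Tracking this two-scale structure yields, for the linear problem with data of size $\al^2$, the bound $\sum_{|\ze|+|\vth|\le m}\|w_l\pa_\ze^\vth g(t)\|_{L^\infty}\le C\al\,e^{-\la\beta_1\al^2t}$; the apparent loss (data $\al^2$, bound $\al$) is precisely the cost of convolving the fast, $O(\al^3)$-amplitude source against the slow Green's function $e^{-\beta_1\al^2(t-s)}$.

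For the full problem I would run a continuity argument: on $[0,T]$ assume $\CM(t):=\sum_{|\ze|+|\vth|\le m}\|w_l\pa_\ze^\vth g(t)\|_{L^\infty}\le C\al\,e^{-\la\beta_1\al^2t}$ and close it for $\al$ small. The quadratic term obeys $\|w_l\pa_\ze^\vth Q(g,g)\|$-type estimates $\lesssim\CM(t)^2$ with a $\nu$-weight loss absorbed by the dissipation; since $\CM\le C\al$ this is subcritical, and feeding it into the linear decay returns $\CM(t)\le\tfrac12C\al\,e^{-\la\beta_1\al^2t}$, hence global existence and \eqref{decay}. Non-negativity of $G$ is preserved by the mild (Duhamel) formulation of the cutoff equation — non-negative gain term, dissipative loss term — whenever $G_0\ge0$, so $G(t,x,v)\ge0$ for all $t$; letting $t\to\infty$ in \eqref{decay} gives $G(t,\cdot,\cdot)\to G_{st}$ pointwise, whence $G_{st}\ge0$, the non-negativity claimed in Theorem~\ref{st.sol}. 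To invoke this one applies the result to $G_0:=\max\{G_{st},0\}$, corrected by an $O(\al^2)$ element in the span of the collision invariants to restore \eqref{id.cons}: this is a legitimate datum because the negative part of $G_{st}$ is supported where $|v|^2\gtrsim\ln(1/\al)$, a region on which $\mu\lesssim\al^2$ (enlarging $l_0$ if needed).

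The main obstacle is step~(ii)--(iii): proving, \emph{uniformly} as $\al\to0$ and within the polynomial-weight $L^\infty$ machinery, that the linearized operator relaxes everything except the bulk energy at an $\al$-independent rate while the bulk energy relaxes at rate $\gtrsim\beta_1\al^2$. This is a degenerate/singular perturbation: the deformation force, though formally $O(\al)$, is exactly what opens the (second-order, hence $O(\al^2)$) gap for the energy mode, and the two-way $O(\al)$ coupling between the slow mode and the fast modes must be shown not to overwhelm it. Verifying that $\al\CK$ and the force do not destroy the spectral gap of $L$, and that the characteristic flow and the lower bound on the collision frequency stay compatible as $\al\to0$, is where most of the work sits.
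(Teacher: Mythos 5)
Your overall architecture --- perturbation around $G_{st}$, the bulk energy $E(t)$ as the unique $O(\al^2)$-slow mode driven only by the microscopic stress, macro--micro splitting with an $L^2$--$L^\infty$ interplay along the explicit characteristics of $\dot V=-(\beta I+\al A)V$, a continuity argument for the nonlinearity, and non-negativity of $G_{st}$ by letting $t\to\infty$ --- matches the paper's strategy, and your moment identity $\dot E=-2\beta_1\al^2E-2\al\sum a_{ij}\int\sigma_{ij}(g)\,dx$ is correct and is exactly the paper's equation for $c$ at the zero spatial mode. But there are two genuine gaps, both located precisely where you admit ``most of the work sits.'' First, your step (ii) proposes ``a weighted $L^2_{x,v}$ energy estimate in which $L$ supplies the dissipation,'' but the coercivity $\lag Lf,f\rag\geq\de_0\|\FP_1f\|_\nu^2$ is only available after conjugating by $\sqrt{\mu}$, i.e.\ for $f=\mu^{-1/2}g$; that conjugation turns $-\al\na_v\cdot(Avg)$ into $-\al\na_v\cdot(Avf)+\tfrac{\al}{2}(v\cdot Av)f$, and the quadratic velocity growth of $\tfrac{\al}{2}(v\cdot Av)f$ (likewise $\tfrac{\beta}{2}|v|^2f$) is \emph{not} dominated by $\nu\sim(1+|v|)^\ga$ with $\ga\leq1$ for any $\al>0$. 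The paper's resolution is the Caflisch decomposition $\sqrt{\mu}f=f_1+\sqrt{\mu}f_2$ (equations \eqref{f1-eq}--\eqref{f2-eq}): the growth term is shipped to the $f_1$-equation multiplied by $\sqrt{\mu}$, where it is harmless in polynomial-weighted $L^\infty$, while $f_2$ retains the full coercivity of $L$ in the Gaussian setting. Your sketch contains no substitute for this device, so the $L^2$ step as written cannot be executed.

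Second, the two-scale coupling you flag cannot be closed by Cauchy--Schwarz. The cross term is $\al\sum a_{ij}(c_2,\lag\RA_{ij},\FP_1g_2\rag)$, of size $\al\|c\|\,\|\FP_1g_2\|$, while the available dissipations are $\beta_1\al^2\|c\|^2$ and $\de_0\|\FP_1g_2\|_\nu^2$; splitting as $\eta\al^2\|c\|^2+C_\eta\|\FP_1g_2\|^2$ forces $\eta$ small and hence $C_\eta$ large, which the \emph{fixed} constant $\de_0$ cannot absorb. The paper closes this by an exact algebraic cancellation: the fourth-moment corrector $\al\,Av\cdot(\na_v\sqrt{\mu})(|v|^2-3)c_{f_2}$ is inserted into the $f_2$-equation (and its mirror image into the $f_1$-equation), so that when the $c$-estimate \eqref{dis-c-p1} and the $\FP_1g_2$-estimate \eqref{g2-l2} are combined with coefficients $1$ and $\tfrac16$, the dangerous terms cancel identically (see \eqref{g2-l2-plus}). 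Without this (or an equivalent) cancellation your step (iii) does not close, and since you explicitly leave it open, the proof is incomplete at its crux. Your remaining ingredients --- the Jacobian control for the change of variables $X(s')\to y$ (the paper's Lemma \ref{expm-lem}), the bookkeeping that converts data of size $\al^2$ into a bound of size $\al$, and the construction of an admissible non-negative datum near $G_{st}$ --- are consistent with the paper.
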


%

In what follows we mention some existing works that are most related to the background and motivations of the current topic; readers may refer to \cite{DL-2020} for a more detailed review. Based on the Fourier transform method in \cite{Bo75, Bo88}, Bobylev-Cercignani \cite{BC02a,BC02b,BC03} discussed the self-similar asymptotics for the spatially homogeneous Boltzmann equation. As in the original work by Galkin \cite{G1} and Truesdell \cite{T},
by solving the ODE system consisting of velocity moments, particularly the second order moments, Cercignani \cite{Cer00} investigated the shear flow problem on a granular flow between parallel plates which is modeled by the Boltzmann equation, and
Bobylev-Cercignani \cite{BC-Mo} later obtained the well-posedness and large time behavior of the granular system described by Boltzmann-like
equations. We also mention that Cercignani \cite{Cer-1989}
proved the global existence of homoenergetic affine flows for the Boltzmann equation in the case of
simple shear for a large class of interaction potentials which include hard potentials, and these solutions in general may not be self-similar.
It seems that \cite{Cer-1989} is the first mathematical result on the homoenergetic solution of the Boltzmann equation for the non Maxwell molecule.

Recently, in a significant progress by James-Nota-Vel\'azquez \cite{JNV-ARMA}, the existence of homoenergetic mild solutions as non-negative Radon measures was studied in a systematic way for a large class of initial data, and the problem on the asymptotics of homoenergetic solutions in the case of non Maxwell molecules was also proposed. In the meantime, it is discussed in \cite{JNV-JNS,JNV19} that there is a balance between the hyperbolic term and collision term for the Boltzmann equation describing homogenergetic flow and the corresponding long time asymptotic behavior
depends on which term is dominated in large time. By combining the Fourier transform method and moments argument, a more recent progress has been achieved by Bobylev-Nota-Vel\'azquez \cite{BNV-2019}, where the authors proved the self-similar asymptotics of solutions in large time for the Boltzmann equation with a general deformation force
under a smallness condition on the matrix $A$, and they also showed that the self-similar profile can have the finite polynomial moments of higher order as long as the norm of $A$ is getting smaller.
To the best of our knowledge, \cite{BNV-2019} seems the only known result on the large time asymptotics to the self-similar profile in weak topology, see also \cite{Bo21} for a further study to provide explicit estimates of the smallness of the matrix $A$. Following \cite{JNV-ARMA} and \cite{BNV-2019}, in the case of Maxwell molecule, the authors of this paper \cite{DL-2020} constructed smooth self-similar profiles for the shear flow problem on the Boltzmann equation and proved the dynamical stability of the stationary solution via a perturbation approach.

As mentioned at the beginning, different from the uniform shear flow where the temperature increases in time and the self-similar asymptotic has to be involved, we expect the extra thermostated term to compensate the viscous heating energy and drive the system to converge to the steady state. We remark that a similar situation may occur to the bounded domain case with diffuse boundaries that also can absorb the shearing energy such that the system tends asymptotically to the steady motion instead of the self-similar solution. In particular, a boundary value problem on the Boltzmann equation for the plane Couette flow was studied in \cite{DLY-2021}, where they established the existence of spatially inhomogeneous non-equilibrium stationary solutions to the steady problem for small shear rate and proved dynamical stability of the stationary solution.


Compared to our previous work \cite{DL-2020} about the self-similar steady problem in case of the simple shear force and Maxwell molecules, we treat in this paper the more general deformation force described by the matrix $A$ and also include the case of hard potentials $0<\gamma\leq 1$ for the molecular interaction. In what follows we outline the key strategies in the proof of main results and point out the main differences with \cite{DL-2020}.   
First of all, for the steady problem \eqref{G-st} or \eqref{G-st.f}, we look for solutions by setting the perturbation
$
G_{st}=\mu+\al\sqrt{\mu}G_1+\al^2 \widetilde{G}_R$ with $\widetilde{G}_R=\sqrt{\mu}G_R$ as in \eqref{def.spert}. Here, $G_1$ as in \eqref{def.G1} is introduced  to remove the zero-order inhomogeneous term in terms of \eqref{G1} and $G_R$ is the remainder satisfying  \eqref{GR}. Note that $G_1$ involves the general deformation matrix $A$ and it is non-zero for any non scalar matrix $A$. The usual energy approach fails to be used to treat  \eqref{GR} due to the second order velocity growth of the term $\frac{\al}{2}v\cdot(Av) G_R$ since the linearized collision operator only provides the dissipation term $\int \nu(v) |G_R|^2dv$ with $\nu(v)\sim |v|^\gamma$ $(0\leq \gamma\leq 1)$ for large velocities. As in \cite{DL-2020}, we employ the Caflisch's decomposition (cf.\cite{Ca-1980})
\begin{align}
\widetilde{G}_R=\sqrt{\mu}G_R=G_{R,1}+\sqrt{\mu}G_{R,2},\notag
\end{align}
where $G_{R,1}$ and $G_{R,2}$ satisfy the coupled system
\begin{align}\label{GR1-in}
-\al^2&\beta_1\na_v\cdot(vG_{R,1})-\al\na_v\cdot(AvG_{R,1})
+\nu G_{R,1}\notag\\=&\chi_M\CK G_{R,1}-\frac{1}{2}\al^2\beta_1|v|^2\sqrt{\mu}G_{R,2}-\frac{\al}{2}v\cdot(Av)\sqrt{\mu}G_{R,2}
+\cdots,
\end{align}
and
\begin{align}\label{GR2-in}
-\al^2&\beta_1\na_v\cdot(vG_{R,2})-\al \na_v\cdot(Av G_{R,2})
+LG_{R,2}=\mu^{-\frac{1}{2}}(1-\chi_M)\CK G_{R,1},
\end{align}
respectively. The benefit of this splitting is that the term $\frac{\al}{2}v\cdot(Av) \sqrt{\mu}G_{R,2}$ is no longer a trouble since it contains
$\sqrt{\mu}$ which can absorb any order polynomial velocity growth. The price to pay is that one cannot make a direct energy estimate on $G_{R,1}$ because $\chi_M\CK G_{R,1}$ may not be small in the $L^2$ setting. However, this can be resolved in terms of the $L^2$-$L^\infty$ interplay since the smallness for $\chi_M\CK G_{R,1}$ can be recovered via the velocity weighted $L^\infty$ norm. Indeed, in the case of Maxwell molecule, the following decay mechanism of $\CK$ has been found in \cite{DL-2020}:
\begin{align}
\sup_{|v|\geq M(l)} w_{l}|\pa_{\ze}(\CK f)|\leq \frac{C}{l} \sum\limits_{0\leq \ze'\leq \ze}\|w_{l}\pa_{\ze'}f\|_{L^\infty},\notag
\end{align}
where $C$ is independent of $l$ and $M(l)\to \infty$ as $l\to \infty$. Thus, the smallness in $L^\infty$ holds whenever $l$ is suitably large.
Note that the above estimate seems hard to be true for the non Maxwell molecule case. To treat this difficulty, motivated by \cite{AEP-87}, in case of hard potentials $0<\gamma\leq 1$,  we instead make use of
the following  estimate
\begin{align}
\sup_{|v|\geq M(l)}(1+|v|)^{-\ga} w_{l}|\CK f|\leq C\{(1+M(l))^{-\ga/2}+\varsigma(l)\}\|w_{l}f\|_{L^\infty},\notag
\end{align}
for $C$ independent of $l$, where it holds that $M(l)\to \infty$ and $\varsigma(l)\to 0$ as $l\to \infty$. Then, the smallness in $L^\infty$ still holds when $l$ is chosen to be large enough. Therefore, in both cases $\gamma=0$ and $0<\gamma\leq 1$, the $L^\infty$ estimates combined with the $L^2$ estimates can be closed.

In addition, the coupled equations \eqref{GR1-in} and \eqref{GR2-in} will be solved by an iteration method in which the conservation laws
$\lag G_{R,1}+\mu^{\frac{1}{2}}G_{R,2},[1,v_i,|v|^2]\rag
=0$ $(i=1,2,3)$
play a crucial role.
To ensure that the macroscopic moments of the iteration system are conserved, we design the following delicate approximation equations
\begin{eqnarray}
\left\{\begin{array}{rll}
\begin{split}
\eps G^{n+1}_{R,1}&-\beta^{n}\na_v\cdot (vG^{n+1}_{R,1})-\al \na_v\cdot(AvG^{n+1}_{R,1})
+\nu G^{n+1}_{R,1}
-\chi_{M}\CK G^{n+1}_{R,1}\\&+\frac{\beta^n}{2}|v|^2\mu^{\frac{1}{2}}G^{n+1}_{R,2}+\frac{1}{2}\al v\cdot(Av)\mu^{\frac{1}{2}}G^{n+1}_{R,2}-\left(\beta_1^{n+1}-\frac{1}{3}\lag G_1,LG_1\rag \right)\na_v\cdot(v\mu)
\\=&\frac{1}{3}\lag G_1,LG_1\rag \na_v\cdot(v\mu)+\frac{\beta^n}{\al}\na_v\cdot(v\mu^{\frac{1}{2}}G_1)+\na_v\cdot(Av\sqrt{\mu}G_1)+
Q(\mu^{\frac{1}{2}}G_1,\mu^{\frac{1}{2}}G_1)\\&+\al\{Q(\mu^{\frac{1}{2}}G^{n}_R,\mu^{\frac{1}{2}}G_1)
+Q(\mu^{\frac{1}{2}}G_1,\mu^{\frac{1}{2}}G^{n}_R)\}+\al^2 Q(\mu^{\frac{1}{2}}G^n_R,\mu^{\frac{1}{2}}G^n_R),\\
\eps G^{n+1}_{R,2}&-\beta^n\na_v\cdot (v G^{n+1}_{R,2})-\al \na_v\cdot(AvG^{n+1}_{R,2})
+LG^{n+1}_{R,2}-(1-\chi_{M})\mu^{-\frac{1}{2}}\CK G^{n+1}_{R,1}=0,
\end{split}
\end{array}\right.\label{it-GR-in}
\end{eqnarray}
where two penalty terms with the  parameter $\eps>0$ have been added  and
\begin{align}
 \beta^n=-\frac{\al}{3}{\rm tr}A+\al^2\beta_1^n,\notag
 \end{align}
 with
 \begin{align}
\beta_1^n=\frac{1}{3}\int_{\R^3} G_1LG_1\,dv
-\frac{\al}{3}\int_{\R^3} \FP_1\{ v\cdot (Av)\sqrt{\mu}\}G_R^n\,dv,\
 \mu^{\frac{1}{2}}G^{n}_R=G^{n}_{R,1}+\mu^{\frac{1}{2}}G^{n}_{R,2}.\notag
\end{align}
System \eqref{it-GR-in} provides us the following cancellations
\begin{align}
&\left\lag\left(\beta_1^{n+1}-\frac{1}{3}\lag G_1,LG_1\rag \right)\na_v\cdot(v\mu),\frac{1}{2}|v|^2\right\rag
-\frac{1}{2}\al \left\lag v\cdot(Av)\mu^{\frac{1}{2}}G^{n+1}_{R,2},\frac{1}{2}|v|^2\right\rag
\notag\\&\qquad-\al \left\lag\na_v\cdot(AvG^{n+1}_{R,1}),\frac{1}{2}|v|^2\right\rag
-\al \left\lag\na_v\cdot(AvG^{n+1}_{R,2}),\frac{1}{2}|v|^2\sqrt{\mu}\right\rag=0,\notag
\end{align}
and
\begin{align}
&\frac{1}{3}\left\lag\lag G_1,LG_1\rag \na_v\cdot(v\mu),\frac{1}{2}|v|^2\right\rag
+ \left\lag \na_v\cdot(Av\sqrt{\mu}G_1),\frac{1}{2}|v|^2\right\rag=0,\notag
\end{align}
which indeed give the energy conservation $\lag G^{n+1}_{R,1}+\mu^{\frac{1}{2}} G^{n+1}_{R,2},|v|^2\rag=0$. Moreover, as in \cite{DHWZ-19} for treating the nonlocal collision term, we introduce a $\si$-parametrized procedure to ensure the construction of solutions to the linear inhomogeneous system with $0\leq \si\leq 1$; see Lemma \ref{lifpri} and Lemma \ref{ex.pals} for details. However, this induces the loss of conservation laws for the system with $0\leq \si<1$ in the hard potential case $0<\gamma\leq 1$, which is quite different from the situation treated in \cite{DL-2020}.


The second point is concerned with the non-negativity of the steady profiles. For the purpose, we introduce a spatially inhomogeneous model \eqref{G-ust} and prove the asymptotic stability of the stationary solution under small perturbation. We remark that although it is a spatially inhomogeneous problem, the proof with slight modifications can still be carried over to treat the spatially homogeneous case. {One difficulty part is to obtain the macroscopic dissipation in a more delicate way than that in the steady case. In particular, we re-design the Caflisch's decomposition $\sqrt{\mu} f=f_1+\sqrt{\mu}f_2$ with $f_1$ and $f_2$ satisfying \eqref{f1-eq}  and \eqref{f2-eq}, respectively. The key point is to add a microscopic fourth-order moment function $\al Av\cdot(\na_v\sqrt{\mu})(|v|^2-3)c_{f_2}$ to the left-hand of \eqref{f2-eq}  in order to cancel such trouble term coming from $-\al \nabla_v\cdot (Avf_2)$. Correspondingly,  \eqref{f1-eq} has been modified with $\al Av\cdot(\na_v\sqrt{\mu})(|v|^2-3)\sqrt{\mu}c_{f_2}$ added to the right-hand side. Under such decomposition, the macroscopic energy $\|c\|^2$ and the microscopic energy $\|\mathbf{P}_1g_2\|^2$ can be combined for estimates, see the result \eqref{g2-l2-plus}. Thus, the corresponding energy dissipation rates $\al^2\|c\|^2$ and $\|\mathbf{P}_1g_2\|_\nu^2$ are obtained. This $L^2$ estimate \eqref{g2-l2-plus} is crucial for obtaining the macroscopic dissipation and further deducing the exponential decay rate with the size proportional to $\al^2$.} This size is the same as that in \cite{DL-2020} for simple shear flow where ${\rm tr} (A)=0$ and the lowest order of $\beta$ is $\al^2$. In the current case for a general deformation matrix $A$, the lowest order of $\beta$ is $\al$ if ${\rm tr} (A)\neq 0$. We remark that it is unclear for us whether the degenerate order $\al^2$ for the size of decay rate is optimal. Moreover, similar to \cite{BMV-16} for the study at the fluid level,  it would be interesting to further consider  possible enhanced decay rates with respect to any small $\al$ by using the deformation effect in case of the hard potentials $0<\gamma\leq 1$ and we will explore this issue in the future.

The third point is related to an application of the Guo's $L^\infty-L^2$ method (cf.~\cite{Guo-2010}). The key idea of this approach in the $L^\infty$ estimate is to convert an integration with respect to $v$ variable along characteristics into an integration with respect to $x$ variable. In the process, one need obtain a proper control for the Jacobian
\begin{align}
\left|\frac{\pa X(s')}{\pa v_\ast}\right|
=&\left|(\beta I+\al A)^{-1}\left[e^{-(s'-s)(\beta I+\al A)}-I\right]\right|\notag
\end{align}
along the following characteristic line
\begin{align*}
\left\{\begin{array}{rll}
&V(s')=V(s';s,X(s),v_\ast)= e^{-(s'-s)(\beta I+\al A)}v_\ast,\\[2mm]
&X(s')=X(s';s,X(s),v_\ast)=X(s)-(\beta I+\al A)^{-1}\left[e^{-(s'-s)(\beta I+\al A)}-I\right]v_\ast.
\end{array}\right.
\end{align*}
For this, as described in Lemma \ref{expm-lem}, we prove a lower bound of the determinant of a matrix exponential, and moreover, we also give an upper bound of the region of the integration after the change of variable $X(s')\rightarrow y$.


The rest  of this paper is arranged as follows. The existence of the steady profile $G_{st}(v)$ for \eqref{G-st} is established in Section \ref{st-pro}. Section \ref{ust-pro}
is devoted to the unsteady problem  \eqref{G-ust} and \eqref{G-id}. In Section \ref{pre-sec} as an appendix, we give the basic estimates on the linearized operator $L$ as well as the nonlinear operators $\Ga$ and $Q$, further present a key estimate for the operator $\CK$ in the case of hard potentials, and finally derive a lower bound for a matrix exponential.

\medskip
\noindent{\it Notations.}
We give more notations to be used throughout the paper.
Let $C$ denote some generic positive (generally large) constant and $\la$ denote some generic positive (generally small) constants, where $C$ and $\la$  may take different values in different places. Let ${\bf 1}_\CS$ be the characteristic function on the set $\CS$.
For simplicity, we use $\|\cdot\| $ to denote the norms of either $L^{2}(\T_x^3
\times \R_v^{3})$ or $L^{2}(\T_x^3)$ or $L^{2}(\R_v^3)$. We also use $\|\cdot \|_{L^\infty }$ to denote the norms of either $L^{\infty }(\T_x^3
\times \R_v^{3})$ or $L^{\infty }(\R_v^3)$. Moreover, 
$(\cdot,\cdot)$ denotes the inner product of
$L^2(\T_x^3\times {\R}_v^{3})$  and $\langle\cdot\rangle$ denotes the inner product of $L^2(\R^3_v)$.


\section{Steady problem}\label{st-pro}
This section is devoted to the existence of the non-equilibrium smooth steady solution of \eqref{G-st}. We begin with some usual notations in the framework of perturbations around the global Maxwellian $\mu$ in \eqref{def.mu}. First of all, we introduce the linearized collision operator $L$ and the nonlinear collision operator $\Ga$, defined by
\begin{align}\label{def.L}
Lg=-\mu^{-1/2}\left\{Q(\mu,\sqrt{\mu}g)+Q(\sqrt{\mu}g,\mu)\right\},
\end{align}
and
\begin{align}\label{def.Ga}
\Gamma (f,g)=\mu^{-1/2} Q(\sqrt{\mu}f,\sqrt{\mu}g)=\int_{\R^3}\int_{\S^2}B(\om,v-v_\ast)\mu^{1/2}(v_\ast)[f(v_\ast')g(v')-f(v_\ast)g(v)]\,d\om dv_\ast,
\end{align}
respectively. Note that
$$
Lf=\nu f-Kf
$$
with
\begin{align}\label{sp.L}
\left\{\begin{array}{rll}
&\nu=\dis{\int_{\R^3}\int_{\S^2}}B(\om,v-v_\ast)\mu(v_\ast)\, d\om dv_\ast\backsim (1+|v|)^{\ga},\\[2mm]
&Kf=\mu^{-\frac{1}{2}}\left\{Q(\mu^{\frac{1}{2}}f,\mu)+Q_{\textrm{gain}}(\mu,\mu^{\frac{1}{2}}f)\right\},
\end{array}\right.
\end{align}
where $Q_{\textrm{gain}}$ denotes the positive part of $Q$ in \eqref{Q-op}. Moreover, it holds that
\begin{align}
Kf=\int_{\R^3}\Fk(v,v_\ast)f(v_\ast)\,dv_\ast=\int_{\R^3}(\Fk_2-\Fk_1)(v,v_\ast)f(v_\ast)\,dv_\ast,\notag
\end{align}
with
\begin{equation}
0\leq \Fk_1(v,v_\ast)\leq \tilde{c}_1|v-v_\ast|^\ga e^{-\frac{1}{4}(|v|^2+|v_\ast|^2)},\ 0\leq \Fk_2(v,v_\ast)\leq \tilde{c}_2|v-v_\ast|^{-2+\ga}e^{-
\frac{1}{8}|v-v_\ast|^{2}-\frac{1}{8}\frac{\left||v|^{2}-|v_\ast|^{2}\right|^{2}}{|v-v_\ast|^{2}}},\label{fk-12}
\end{equation}
where both $\tilde{c}_1$ and $\tilde{c}_2$ are positive constants. Note that in the case of $\ga=1$, \eqref{fk-12} has been derived in \cite[pp.45-46]{Gla-book}, and the remaining cases that $\ga\in[0,1)$ can be treated similarly. The upper bound in \eqref{fk-12}  may not be optimal.

The kernel of $L$, denoted as $\ker L$, is a five-dimensional space spanned by
$$
\{1,v,|v|^2-3\}\sqrt{\mu}:= \{\phi_i\}_{i=1}^5.
$$ We further define a projection from $L^2$ to $\ker(L)$ by
\begin{align*}
\FP_0 g=\left\{a_g+\Fb_g\cdot v+(|v|^2-3)c_g\right\}\sqrt{\mu}
\end{align*}
for $g\in L^2$, and correspondingly denote the operator $\FP_1$ by $\FP_1g=g-\FP_0 g$, which is orthogonal to $\FP_0$ in $L^2$.
Traditionally, $\FP_0 g$ is also called the macroscopic part, while $\FP_1 g$ stands for the microscopic component.

It is also convenient to define
\begin{align}\notag
\CL f=-\left\{Q(f,\mu)+Q(\mu,f)\right\}=\nu f-\CK f,
\end{align}
with
\begin{equation}\label{sp.cL}
\CK f=Q(f,\mu)+Q_{\textrm{gain}}(\mu,f)=\sqrt{\mu}K(\frac{f}{\sqrt{\mu}}).
\end{equation}

\subsection{Hilbert expansion and Caflisch's decomposition}
As derived before, we will study the steady problem
\begin{equation}
\label{sp}
-\beta \na_v\cdot (vG_{st})-\al \na_v\cdot(Av G_{st})=Q(G_{st},G_{st})
\end{equation}
with
\begin{equation}
\label{def-beta}
\beta=-\frac{\al}{3}\int_{\R^3} v\cdot (Av)G_{st}\,dv.
\end{equation}
Our goal is to look for a unique smooth solution $G_{st}(v)$ satisfying
\begin{equation}
\label{con-G}
\int_{\R^3} G_{st} \,dv=1,\quad
\int_{\R^3}v_i G_{st} \,dv=0,i=1,2,3,\quad
\int_{\R^3} |v|^2 G_{st} \,dv=3.
\end{equation}
Note that through the paper we have omitted the dependence of $G_{st}$ on the parameter $\al$.
It can be expected that $G_{st}\to \mu$ if $\al\to 0$. As such, we set
\begin{equation}\label{G-exp}
G_{st}=\mu+\al\sqrt{\mu}\left\{G_1+\al G_R\right\}
\end{equation}
with $\FP_0G_1=\FP_0G_R=0 $ such that \eqref{con-G} is valid, and hence we impose that
\begin{eqnarray}\label{con.g1r}
\left\{\begin{array}{rll}
\begin{split}
&\int_{\R^3}G_1\sqrt{\mu}\,dv=\int_{\R^3}G_R\sqrt{\mu}\,dv=0,\\
&\int_{\R^3}G_1v_i\sqrt{\mu}\,dv=\int_{\R^3}G_Rv_i\sqrt{\mu}\,dv=0,\ i=1,2,3,\\
&\int_{\R^3}G_1|v|^2\sqrt{\mu}\,dv=\int_{\R^3}G_R|v|^2\sqrt{\mu}\,dv=0,
\end{split}
\end{array}\right.
\end{eqnarray}
where $G_1$ is the first order correction and $G_R$ denotes the higher order remainder. Plugging \eqref{G-exp} into \eqref{def-beta}, we get
\begin{align}\label{bet-exp}
\beta=-\frac{\al}{3}\int_{\R^3} v\cdot (Av)[\mu+\al\sqrt{\mu}\left\{G_1+\al G_R\right\}]\,dv=
\al\beta_0+\al^2\beta_1
\end{align}
with
\begin{align}\label{beta01}
\beta_0=-\frac{1}{3}{\rm tr}A,\ \beta_1=-\frac{1}{3}\int_{\R^3} v\cdot (Av)[\sqrt{\mu}\left\{G_1+\al G_R\right\}]\,dv.
\end{align}
Furthermore, substituting \eqref{G-exp} and \eqref{bet-exp} into \eqref{sp} and comparing the coefficients in front of the different orders of $\al$, one has
\begin{align}\label{G1}
-\beta_0\mu^{-\frac{1}{2}}\na_v\cdot(v\mu)-\mu^{-\frac{1}{2}}\na_v\cdot(Av\mu)+ LG_1=0,
\end{align}
and
\begin{align}\label{GR}
-\beta&\mu^{-\frac{1}{2}}\na_v\cdot(v\sqrt{\mu}G_R)-\al\mu^{-\frac{1}{2}}\na_v\cdot(Av\sqrt{\mu}G_R)
+LG_R\notag\\=&\beta_1\mu^{-\frac{1}{2}}\na_v\cdot(v\mu)+\frac{\beta}{\al}\mu^{-\frac{1}{2}}\na_v\cdot(v\sqrt{\mu}G_1)
+\mu^{-\frac{1}{2}}\na_v\cdot(Av\sqrt{\mu}G_1)
\notag\\&+\al\{\Ga(G_1,G_R)+\Ga(G_R,G_1)\}+\al^2\Ga(G_R,G_R).
\end{align}
In light of expression for $\beta_0$ in \eqref{beta01}, one gets from \eqref{G1} that
\begin{align}\label{G1-exp}
G_1=-\sum_{i,j=1}^3a_{ij}L^{-1}\left\{(v_iv_j-\frac{1}{3}\delta_{ij}|v|^2)\mu^{\frac{1}{2}}\right\},
\end{align}
which in turn gives
\begin{align}
\beta_1=&\frac{1}{3}\int_{\R^3}\FP_1\{ v\cdot (Av)\sqrt{\mu}\}L^{-1}\{\FP_1(v\cdot Av \mu^{\frac{1}{2}})\}\,dv
-\frac{\al}{3}\int_{\R^3} \FP_1\{ v\cdot (Av)\sqrt{\mu}\}G_R\,dv
\notag\\
=&\frac{1}{3}\int_{\R^3} G_1LG_1\,dv-\frac{\al}{3}\int_{\R^3} \FP_1\{ v\cdot (Av)\sqrt{\mu}\}G_R\,dv.\notag
\end{align}
Note that one has $\beta_1>0$ provided that $A$ is not a scalar matrix and $\al$ is suitably small.

The remainder $G_R$ is determined by \eqref{GR}. There is a severe growth term $\frac{\al}{2}v\cdot(Av)G_R$ caused by the deformation force. To overcome this difficulty, as \cite{DL-2020,DLY-2021}, we resort to the following Caflisch's decomposition
\begin{align}
\sqrt{\mu}G_R=G_{R,1}+\sqrt{\mu}G_{R,2},\notag
\end{align}
where $G_{R,1}$ and $G_{R,2}$ satisfy
\begin{align}\label{GR1}
-\beta&\na_v\cdot(vG_{R,1})-\al\na_v\cdot(AvG_{R,1})
+\nu G_{R,1}\notag\\=&\chi_M\CK G_{R,1}-\frac{\beta}{2}|v|^2\sqrt{\mu}G_{R,2}-\frac{\al}{2}v\cdot(Av)\sqrt{\mu}G_{R,2}
+\beta_1\na_v\cdot(v\mu)+\frac{\beta}{\al}\na_v\cdot(v\sqrt{\mu}G_1)\notag\\&+\na_v\cdot(Av\sqrt{\mu}G_1)
+Q(\sqrt{\mu}G_1,\sqrt{\mu}G_1)+\al\{Q(\sqrt{\mu}G_1,\sqrt{\mu}G_R)+Q(\sqrt{\mu}G_R,\sqrt{\mu}G_1)\}
\notag\\&+\al^2Q(\sqrt{\mu}G_R,\sqrt{\mu}G_R),
\end{align}
and
\begin{align}\label{GR2}
-&\beta\na_v\cdot(vG_{R,2})-\al \na_v\cdot(Av G_{R,2})
+LG_{R,2}=\mu^{-\frac{1}{2}}(1-\chi_M)\CK G_{R,1},
\end{align}
respectively. Here, $\chi_{M}(v)$ is a non-negative smooth cutoff function defined by
\begin{align}
\chi_{M}(v)=\left\{\begin{array}{rll}
1,&\ |v|\geq M+1,\\[2mm]
0,&\ |v|\leq M,
\end{array}\right.\notag
\end{align}
with $M>0$ sufficiently large.

We will prove the unique existence of \eqref{GR1} and \eqref{GR2} in the Banach  space
\begin{align}
\FX_m=\big\{\CG=&[\CG_1,\CG_2]\big|\sum\limits_{k\leq m}\{\|w_l\na_v^k\CG_1\|_{L^\infty}+\|w_l\na_v^k\CG_2\|_{L^\infty}\}<+\infty,\
k,m\in\Z^+,\notag\\
 &\lag \CG_1,[1,v_i,|v|^2]\rag+\lag \CG_2,[1,v_i,|v|^2]\mu^{\frac{1}{2}}\rag=0,\ i=1,2,3\big\}\notag
\end{align}
associated with the norm
$$
\|[\CG_1,\CG_2]\|_{\FX,m}=\sum\limits_{k\leq m}\{\|w_l\na_v^k\CG_1\|_{L^\infty}+\|w_l\na_v^k\CG_2\|_{L^\infty}\}.
$$
To do so,
we design the following iteration equations
\begin{eqnarray}\label{gr12-ls}
\left\{\begin{array}{rll}
\begin{split}
\eps G^{n+1}_{R,1}&-\beta^{n}\na_v\cdot (vG^{n+1}_{R,1})-\al \na_v\cdot(AvG^{n+1}_{R,1})
+\nu G^{n+1}_{R,1}
-\chi_{M}\CK G^{n+1}_{R,1}\\&+\frac{\beta^n}{2}|v|^2\mu^{\frac{1}{2}}G^{n+1}_{R,2}+\frac{1}{2}\al v\cdot(Av)\mu^{\frac{1}{2}}G^{n+1}_{R,2}-\left(\beta_1^{n+1}-\frac{1}{3}\lag G_1,LG_1\rag \right)\na_v\cdot(v\mu)
\\=&\frac{1}{3}\lag G_1,LG_1\rag \na_v\cdot(v\mu)+\frac{\beta^n}{\al}\na_v\cdot(v\mu^{\frac{1}{2}}G_1)+\na_v\cdot(Av\sqrt{\mu}G_1)+
Q(\mu^{\frac{1}{2}}G_1,\mu^{\frac{1}{2}}G_1)\\&+\al\{Q(\mu^{\frac{1}{2}}G^{n}_R,\mu^{\frac{1}{2}}G_1)
+Q(\mu^{\frac{1}{2}}G_1,\mu^{\frac{1}{2}}G^{n}_R)\}+\al^2 Q(\mu^{\frac{1}{2}}G^n_R,\mu^{\frac{1}{2}}G^n_R),\\
\eps G^{n+1}_{R,2}&-\beta^n\na_v\cdot (v G^{n+1}_{R,2})-\al \na_v\cdot(AvG^{n+1}_{R,2})
+LG^{n+1}_{R,2}-(1-\chi_{M})\mu^{-\frac{1}{2}}\CK G^{n+1}_{R,1}=0.
\end{split}
\end{array}\right.
\end{eqnarray}
Here the parameter $\eps>0$ is introduced such that all the conservation laws for $G_R^{n+1}$ as in \eqref{con.g1r} can be satisfied. Moreover
we have denoted
\begin{align}
\mu^{\frac{1}{2}}G^{n}_R=G^{n}_{R,1}+\mu^{\frac{1}{2}}G^{n}_{R,2},\quad n\geq0,\notag
\end{align}
and
\begin{align}\label{bt1n}
 \beta^n=\al\beta_0+\al^2\beta_1^n
 \end{align}
 with
 \begin{align}
\beta_1^n=\frac{1}{3}\int_{\R^3} G_1LG_1\,dv
-\frac{\al}{3}\int_{\R^3} \FP_1\{ v\cdot (Av)\sqrt{\mu}\}G_R^n\,dv,\quad n\geq0,\notag
\end{align}
as well as
$$
[G^{0}_{R,1},G^{0}_{R,2}]=[0,0].
$$
Note that the approximation solutions are constructed to satisfy \eqref{gr12-ls}, by which the following identities hold true
\begin{align}
&\left\lag\left(\beta_1^{n+1}-\frac{1}{3}\lag G_1,LG_1\rag \right)\na_v\cdot(v\mu),\frac{1}{2}|v|^2\right\rag
-\frac{1}{2}\al \left\lag v\cdot(Av)\mu^{\frac{1}{2}}G^{n+1}_{R,2},\frac{1}{2}|v|^2\right\rag
\notag\\&\qquad-\al \left\lag\na_v\cdot(AvG^{n+1}_{R,1}),\frac{1}{2}|v|^2\right\rag
-\al \left\lag\na_v\cdot(AvG^{n+1}_{R,2}),\frac{1}{2}|v|^2\sqrt{\mu}\right\rag=0,\notag
\end{align}
and
\begin{align}
&\frac{1}{3}\left\lag\lag G_1,LG_1\rag \na_v\cdot(v\mu),\frac{1}{2}|v|^2\right\rag
+ \left\lag \na_v\cdot(Av\sqrt{\mu}G_1),\frac{1}{2}|v|^2\right\rag=0,\notag
\end{align}
so that one can show the conservation laws  \eqref{con.g1r} for $G_R^{n+1}$.


The proof of Theorem \ref{st.sol}  follows by three steps. First, we show the well-posedness of the system \eqref{gr12-ls}
for given $[G^{n}_{R,1},G^{n}_{R,2}]$ and $\eps>0$. Second, we establish the limit process $n\rightarrow+\infty$ for any fixed parameter $\eps>0$. Third, we pass the limits $\eps\rightarrow0^+$ to obtain the unique smooth solution of the system \eqref{GR1} and \eqref{GR2}.

\subsection{A uniform $L^\infty$ estimate with respect to the parameter $\si$}
Since both $K$ and $\CK$ are nonlocal and don't possess the property of smallness, it is convenient to
introduce the following linear vector operator parameterized by $\si\in[0,1]$ (cf.~\cite{DL-2020}):
$$
\mathscr{L}_\si[\CG_1,\CG_2]=[\mathscr{L}^1_\si,\mathscr{L}^2_\si][\CG_1,\CG_2],
$$
\begin{eqnarray*}
\left\{\begin{array}{rll}
\begin{split}
\mathscr{L}^1_{\si}[\CG_1,\CG_2]=&\eps\CG_1-\beta'\na_v\cdot (v\CG_1)-\al \na_v\cdot(Av\CG_1)
+\nu\CG_1
-\si\chi_{M}\CK\CG_1\\
&+\frac{\beta'}{2}|v|^2\sqrt{\mu}\CG_2
+\al\frac{v\cdot (Av)}{2}\sqrt{\mu}\CG_2-\bet''(\CG)\na_v\cdot(v\mu),\\
\mathscr{L}^2_\si[\CG_1,\CG_2]=&\eps\CG_2-\beta'\na_v\cdot (v\CG_2)-\al \na_v\cdot(Av\CG_2)
+\nu\CG_2-\si K\CG_2-\si(1-\chi_{M})\mu^{-\frac{1}{2}}\CK \CG_1,
\end{split}
\end{array}\right.
\end{eqnarray*}
where $\beta'$ is a given constant of order $\al$, and
\begin{equation}\label{def.betaG}
\beta''(\CG)=-\frac{\al}{3}\int_{\R^3} \FP_1\{ v\cdot (Av)\}(\CG_1+\sqrt{\mu}\CG_2)\,dv.
\end{equation}
We then consider the solvability of the general coupled linear system
\begin{align}\label{pals}
\left\{\begin{array}{rll}
&\mathscr{L}^1_{\si}[\CG_1,\CG_2]=\CF_1,\\[2mm]
&\mathscr{L}^2_\si[\CG_1,\CG_2]=\CF_2,
 \end{array}\right.
\end{align}
where $[\CF_1,\CF_2]$ is given.

\begin{remark}
Note that in the case of $0<\ga\leq1$ (hard potentials) and $\si\neq1$,
the approximation system \eqref{pals} does not imply $[\CG_1,\CG_2]\in\FX_m$ even if $[\CF_1,\CF_2]\in\FX_m$, because the
structural
damage of the linear operators $\CL$ and $L$ violates the following laws of conservation
$$\lag \CG_1,[1,v_i,|v|^2]\rag+\lag \CG_2,[1,v_i,|v|^2]\mu^{\frac{1}{2}}\rag=0,\ i=1,2,3.
$$
\end{remark}

Due to the above remark, different from \cite{DL-2020} in the pure Maxwell molecule case, a convenient functional space to be considered is the following
\begin{align}
\bar{\FX}_m=\big\{\CG=&[\CG_1,\CG_2]\big|\sum\limits_{k\leq m}\{\|w_l\na_v^k\CG_1\|_{L^\infty}+\|w_l\na_v^k\CG_2\|_{L^\infty}\}<+\infty,\
k,m\in\Z^+\big\}\notag
\end{align}
equipped with the norm
$$
\|[\CG_1,\CG_2]\|_{\bar{\FX}_m}=\sum\limits_{k\leq m}\{\|w_l\na_v^k\CG_1\|_{L^\infty}+\|w_l\na_v^k\CG_2\|_{L^\infty}\}.
$$
The main idea showing the well-posedness of \eqref{pals} is to adopt the bootstrap argument based on the following
{\it a priori} $L^\infty$ estimates.

\begin{lemma}[{\it a priori} estimate]\label{lifpri}
Assume that $\beta'\neq0$ is of order $\al$. Let $[\CG_1,\CG_2]\in\bar{\FX}_{m}$ with $m\geq 0$ be a solution to \eqref{pals} with $\eps>0$ and suitably small, $\si\in[0,1)$ and $[\CF_1,\CF_2]\in\bar{\FX}_m$. There is $l_0>0$ such that for any $l\geq l_0$ arbitrarily large, there are $\al_0=\al_0(l)>0$ and large $M=M(l)>0$ such that for any $0<\al<\al_0$ with $C\al <1-\si$ for a generic large constant $C>0$,  the solution $[\CG_1,\CG_2]$ of the system \eqref{pals}  satisfies the following estimate
\begin{align}\label{Lif.es1}
\|[\CG_1,\CG_2]\|_{\bar{\FX}_{m}}=\|\mathscr{L}_\si^{-1}[\CF_1,\CF_2]\|_{\bar{\FX}_{m}}\leq C_\mathscr{L}\sum\limits_{0\leq k\leq m}\left\{\|w_{l}\na_v^k\CF_1\|_{L^\infty}+\|w_{l}\na_v^k\CF_2\|_{L^\infty}\right\},
\end{align}
where the constant $C_\mathscr{L}>0$ depends on $\eps$ but not on $\si$  and $\al$.
\end{lemma}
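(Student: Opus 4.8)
The plan is to establish the a priori $L^\infty$ bound \eqref{Lif.es1} by combining an $L^2$ energy estimate for the microscopic parts with a velocity-weighted $L^\infty$ estimate along characteristics, exploiting the smallness factor $1-\si$ to close the nonlocal terms $\si\chi_M\CK\CG_1$, $\si K\CG_2$ and $\si(1-\chi_M)\mu^{-1/2}\CK\CG_1$. First I would derive the basic $L^2$ estimate: take the inner product of $\mathscr{L}^1_\si[\CG_1,\CG_2]=\CF_1$ with $\CG_1$ and of $\mathscr{L}^2_\si[\CG_1,\CG_2]=\CF_2$ with $\CG_2$, add them, and use that the first-order transport terms $-\beta'\na_v\cdot(v\,\cdot)-\al\na_v\cdot(Av\,\cdot)$ produce only bounded zeroth-order contributions (proportional to $\al$), that $\nu\sim(1+|v|)^\ga$ gives a coercive term $\|\sqrt\nu\,\CG_1\|^2+\|\sqrt\nu\,\CG_2\|^2$, that the cross terms $\frac{\beta'}{2}|v|^2\sqrt\mu\,\CG_2\CG_1$ and $\al\frac{v\cdot(Av)}{2}\sqrt\mu\,\CG_2\CG_1$ are absorbed because $\sqrt\mu$ kills the polynomial growth, and that $L$ satisfies the standard local coercivity $\langle L\CG_2,\CG_2\rangle\geq\la\|\FP_1\CG_2\|_\nu^2$ (Lemma from the appendix on $L$). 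The operator norms of $\chi_M\CK$, $K$, $(1-\chi_M)\mu^{-1/2}\CK$ on $L^2$ are bounded, so the $\si$-terms contribute at most $\si\cdot C(\|\CG_1\|^2+\|\CG_2\|^2)$; since $\si<1$ and these are not small in $L^2$, the $\eps$-penalty terms $\eps\|\CG_1\|^2+\eps\|\CG_2\|^2$ together with the $\nu$-coercivity and the macroscopic correction $-\beta''(\CG)\na_v\cdot(v\mu)$ (whose contribution is quadratically small in $\al$ via \eqref{def.betaG}) yield $\eps(\|\CG_1\|^2+\|\CG_2\|^2)+\|\FP_1\CG_2\|_\nu^2\leq C_\eps(\|\CF_1\|^2+\|\CF_2\|^2)+C\al(\cdots)$. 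This is where the $\eps$-dependence of $C_\mathscr{L}$ enters.

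Next I would run the weighted $L^\infty$ estimate. Writing the characteristic ODEs $\dot V=-(\beta I+\al A)V$ — or rather for each equation the relevant drift — one integrates $w_l\CG_1$ and $w_l\CG_2$ along backward characteristics. The key structural point (as in \cite{DL-2020}, \cite{Guo-2010}) is that on $|v|\geq M$ the operator $\CK$ gains smallness: in the Maxwell case one has $\sup_{|v|\geq M(l)}w_l|\pa_\ze\CK f|\leq\frac{C}{l}\sum_{\ze'\leq\ze}\|w_l\pa_{\ze'}f\|_{L^\infty}$, and in the hard-potential case $0<\ga\leq1$ one uses instead the estimate $\sup_{|v|\geq M(l)}(1+|v|)^{-\ga}w_l|\CK f|\leq C\{(1+M(l))^{-\ga/2}+\varsigma(l)\}\|w_l f\|_{L^\infty}$ stated in the introduction and proved in the appendix. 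Combined with the exponential decay $e^{-\int\nu}$ along characteristics (here $\nu\gtrsim 1$ and the drift matrix $\beta I+\al A$ has small norm, so the characteristic flow is a mild perturbation of scaling), the terms $\si\chi_M\CK\CG_1$ and $\si(1-\chi_M)\mu^{-1/2}\CK\CG_1$ and $\si K\CG_2$ contribute with a coefficient of the form $\si(\frac{C}{l}+\text{(low-velocity contribution)})$. On the bounded region $|v|\leq M$, where $\CK$ has no smallness, one falls back on the $L^2$ bound: the kernel of $K$ is integrable, $\Fk(v,v_\ast)$ decays, so $\|w_l K g\|_{L^\infty(|v|\leq M)}\leq C_M\|g\|_{L^2}$, converting the $L^\infty$ norm into the already-controlled $L^2$ norm. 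Putting the high- and low-velocity pieces together gives $\|w_l\CG_1\|_{L^\infty}+\|w_l\CG_2\|_{L^\infty}\leq(\text{small})\cdot(\|w_l\CG_1\|_{L^\infty}+\|w_l\CG_2\|_{L^\infty})+C_\eps(\|w_l\CF_1\|_{L^\infty}+\|w_l\CF_2\|_{L^\infty}+\|\CF_1\|+\|\CF_2\|)$, where ``small'' is $\si\cdot O(1/l)+\si\cdot O((1+M(l))^{-\ga/2}+\varsigma(l))+C\al$; choosing $l\geq l_0$ large, then $M=M(l)$ large, then $\al_0(l)$ small with $C\al<1-\si$ makes the coefficient $<1$ and lets one absorb it.

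Finally, the higher-order derivative estimates ($1\leq k\leq m$) follow by induction on $k$: differentiating \eqref{pals} in $v$ produces commutator terms $[\pa_\ze,\nu]$, $[\pa_\ze,\na_v\cdot(Av\,\cdot)]$, $[\pa_\ze,\chi_M\CK]$ of order $\leq k-1$ plus lower-order copies of $\CG_1,\CG_2$, all of which are controlled by the inductive hypothesis and the same weighted-characteristic scheme, with the $\CK$-derivative smallness supplied by the same appendix estimate (which includes $\pa_\ze$ in the Maxwell case and extends to derivatives in the hard case). The constant $C_\mathscr{L}$ depends on $\eps$ (through $C_\eps$ in the $L^2$ step and the low-velocity $L^2\to L^\infty$ conversion) but, crucially, not on $\si$, because the $\si$-factor always multiplies the small coefficient, and not on $\al$, because the $\al$-dependent transport and $\beta''$-terms are handled uniformly for $\al<\al_0$. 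The main obstacle is the interplay in the hard-potential case: unlike \cite{DL-2020}, the clean decay-of-$\CK$ bound $\frac{C}{l}$ is unavailable, so one must carefully balance the weaker smallness $(1+M(l))^{-\ga/2}+\varsigma(l)$ from the appendix estimate against the loss of one power of $(1+|v|)^\ga$ it incurs, and track that the $\nu$-gain in the $L^\infty$ characteristic integral exactly compensates this loss while still leaving room to absorb the term back — managing the $(1+|v|)^{-\ga}$ versus $\nu$ bookkeeping is the technically delicate point.
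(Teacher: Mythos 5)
Your overall strategy is the same as the paper's: a velocity-weighted $L^\infty$ estimate along the characteristics of $-\beta'\na_v\cdot(v\,\cdot)-\al\na_v\cdot(Av\,\cdot)$, closed by the large-velocity smallness of $\CK$ (Lemma \ref{CK} for $\ga=0$, Lemma \ref{g-ck-lem} for $0<\ga\leq1$) together with an $L^\infty$--$L^2$ conversion of the low-velocity part of $K\CG_2$, the resulting $L^2$ norm being supplied by an energy estimate, and induction on the order of the derivatives.

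Two points, one of which is a real error though not a fatal one. First, your $L^2$ stage pairs the first equation of \eqref{pals} with $\CG_1$ and asserts that $\chi_M\CK$ is bounded on $L^2$. It is not: the kernel of $\CK$ is $\Fk(v,v_\ast)\,e^{-(|v|^2-|v_\ast|^2)/4}$, and since the exponent of the dominant part $\Fk_2\,e^{-(|v|^2-|v_\ast|^2)/4}$ reduces to $-\bigl(v\cdot(v-v_\ast)\bigr)^2/(2|v-v_\ast|^2)$, it has no decay in $|v-v_\ast|$ near the set $v\cdot(v-v_\ast)=0$; on the support of $\chi_M$ this makes $\CK$ unbounded on unweighted $L^2$, which is precisely why the paper keeps $\CG_1$ exclusively in the weighted $L^\infty$ framework. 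The paper performs the $L^2$ estimate only on the $\CG_2$-equation, where the identity $\nu f-\si Kf=(1-\si)\nu f+\si Lf$ yields the coercive term $(1-\si)\|\CG_2\|_\nu^2+\de_0\si\|\FP_1\CG_2\|_\nu^2$ (this is where the hypothesis $C\al<1-\si$ enters) and where the coupling $(1-\chi_M)\mu^{-\frac{1}{2}}\CK\CG_1$ is compactly supported in $v$ and bounded pointwise by $\|w_l\CG_1\|_{L^\infty}$. Your argument closes once the superfluous $\CG_1$ energy estimate is dropped, because the $L^\infty$ bound for $\CG_1$ involves $\CG_2$ only with the small factor $C\al$, so the non-small constant in front of $\|w_l\CG_1\|_{L^\infty}$ in the $\CG_2$ bound is harmless after substitution. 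Second, for $0<\ga\leq\frac12$ the kernel $\Fk_w$ is not square-integrable near $v=v_\ast$, so the conversion $\|w_lKg\|_{L^\infty(|v|\leq M)}\leq C_M\|g\|$ requires the truncated kernel $\mathbf{k}_{w,p}$ of \eqref{km}, exactly as in the paper's Case 3; this is routine but should be stated.
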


\begin{proof}
The proof is divided into two steps.

\medskip
\noindent\underline{Step 1. $L^\infty$ estimates.}  Taking $0\leq k\leq m$ and $l>0$, we set $H_{1,k}=w_{l}\na_v^k\CG_1$ and $H_{2,k}=w_{l}\na_v^k\CG_2$. Then, $\FH_k=[H_{1,k},H_{2,k}]$ satisfies the following equations:
\begin{align}
\eps H_{1,k}&-\bet' \na_v\cdot(v H_{1,k})+2l \bet' \frac{|v|^2}{1+|v|^2} H_{1,k}
-\al \na_v\cdot(AvH_{1,k})+2l \al \frac{v\cdot(Av)}{{1+|v|^2}}H_{1,k}
+\nu H_{1,k}\notag\\
&-\si\chi_{M}w_l\CK \left(\frac{H_{1,k}}{w_{l}}\right)
-w_l\bet''(\frac{\FH_{0}}{w_l})\na_v^k\na_v\cdot(v\mu)\notag\\
= &{\bf1}_{k'=1}w_{l}\bet' C_k^{k'}\na_v\cdot(\na_v^{k'}v\na_v^{k-k'}\CG_1)
+{\bf1}_{k'=1}\al C_k^{k'}w_l\na_v\cdot(\na_{v}^{k'}(Av)\na_v^{k-k'}\CG_1)
\notag\\&-\frac{\beta'}{2}w_l\sum\limits_{k'\leq k}C_k^{k'}\na_v^{k'}(|v|^2\mu^{\frac{1}{2}})\na_v^{k-k'}\CG_2
-\frac{\al}{2}\sum\limits_{k'\leq k}w_{l} C_k^{k'}\na_v^{k'}\left(v\cdot (Av)\mu^{\frac{1}{2}}\right)\na_v^{k-k'}\CG_2
\notag\\&-{\bf1}_{k>0}w_l\sum\limits_{0<k'\leq k}C_k^{k'}\na_v^{k'}\nu\na^{k-k'}_v\CG_1+{\bf1}_{k>0}\si\sum\limits_{0<k'\leq k}C_k^{k'}w_{l}\na_v^{k'}(\chi_{M}\CK) \na_v^{k-k'}\CG_1\notag\\&+w_{l}\na_v^k\CF_1,\label{H10}
\end{align}
and
\begin{align}
\eps H_{2,k}&-\bet' \na_v\cdot(v H_{2,k})+2l \bet' \frac{|v|^2}{1+|v|^2} H_{2,k}
-\al \na_v\cdot (AvH_{2,k})+2l \al \frac{v\cdot (Av)}{{1+|v|^2}}H_{2,k}
\notag\\
&+\nu H_{2,k}-\si w_{l}K\left(\frac{H_{2,k}}{w_{l}}\right)
\notag\\=&{\bf1}_{k'=1}w_{l}\bet' C_k^{k'}\na_v\cdot(\na_v^{k'}v\na_v^{k-k'}\CG_2)
+{\bf1}_{k'=1}\al C_k^{k'}w_{l}\na_v\cdot(\na^{k'}_{v}(Av)\na_v^{k-k'}\CG_2)
\notag\\&-{\bf1}_{k>0}w_l\sum\limits_{0<k'\leq k}C_k^{k'}\na_v^{k'}\nu \na^{k-k'}_v\CG_2 +{\bf1}_{k>0}\si w_{l}\sum\limits_{0<k'\leq k}C_k^{k'}\na_v^{k'}K \na^{k-k'}_v\CG_2 \notag\\
&+\si\sum\limits_{k'\leq k}C_k^{k'}w_{l}\na_v^{k'}((1-\chi_{M})\mu^{-\frac{1}{2}}\CK) \na_v^{k-k'}\CG_1+w_{l}\na_v^k\CF_2,\label{H20}
\end{align}
where $\FH_0:=[H_1,H_2]=[H_{1,0},H_{2,0}]=w_{l}[\CG_1,\CG_2].$
Notice that \eqref{H10} and \eqref{H20} are linear PDEs of first order, it is convenient to
apply the method of characteristics to obtain $L^\infty$ estimate (cf.~\cite{EGKM-13,EGKM-18}). To do this, we first introduce a uniform parameter $t\in\R$, and regard $H_{i,k}(v)=H_{i,k}(t,v)$$(i=1,2)$, then define the characteristic line $[s,V(s;t,v)]$ for equations \eqref{H10} and \eqref{H20} going through $(t,v)$ such that
\begin{align}\label{H1CL}
\left\{\begin{array}{rll}
&\frac{d V}{ds}=-\beta' V(s;t,v)-\al A V(s;t,v),\\[2mm]
&V(t;t,v)=v,
\end{array}\right.
\end{align}
which is equivalent to
\begin{equation*}
\begin{split}
V(s)=V(s;t,v)=e^{-(s-t)(\beta' I+\al A)}v.
\end{split}
\end{equation*}
Since $\beta'\neq0$, it is natural to expect that 
$|V(s)|\to+\infty$ as $s\rightarrow-\infty$ and
$G_R(v)\to 0$ as $|v|\rightarrow+\infty$.
 Due to this, integrating along the backward trajectory \eqref{H1CL} with respect to $s\in(-\infty,t]$, one can write the solutions of \eqref{H10} and \eqref{H20} as the mild form of
\begin{align}
H_{1,k}(v(t))=\sum\limits_{i=1}^6\CI_{i},\notag
\end{align}
with
\begin{align}
\CI_{1}=\si\int_{-\infty}^{t}e^{-\int_{s}^t\CA^\eps(\tau,V(\tau))d\tau}\left\{\chi_{M}w_l\CK
\left(\frac{H_{1,k}}{w_{l}}\right)\right\}(V(s))\,ds,\notag
\end{align}
\begin{align}
\CI_{2}=
\int_{-\infty}^{t}e^{-\int_{s}^t\CA^\eps(\tau,V(\tau))d\tau}
\left\{w_l\bet''(\frac{\FH_0}{w_l})\na_v^k\na_v\cdot(v\mu)\right\}(V(s))\,ds,
\notag
\end{align}
\begin{align}
\CI_{3}=&\int_{-\infty}^{t}e^{-\int_{s}^t\CA^\eps(\tau,V(\tau))d\tau}
\bigg\{{\bf1}_{k'=1}w_{l}\bet' C_k^{k'}\na_v\cdot(\na_v^{k'}v\na_v^{k-k'}\CG_1)
\notag\\&\qquad\qquad\qquad\qquad\qquad\qquad+{\bf1}_{k'=1}\al C_k^{k'}w_l\na_v\cdot(\na_{v}^{k'}(Av)\na_v^{k-k'}\CG_1)\bigg\}(V(s))\,ds,\notag
\end{align}
\begin{align}
\CI_{4}=&-\int_{-\infty}^{t}e^{-\int_{s}^t\CA^\eps(\tau,V(\tau))d\tau}\sum\limits_{k'\leq k}C_k^{k'}\bigg\{\frac{\beta'}{2}w_l\na_v^{k'}(|v|^2\mu^{\frac{1}{2}})\na_v^{k-k'}\CG_2
\notag\\&\qquad\qquad\qquad\qquad\qquad\qquad\qquad\qquad
+\frac{\al}{2}w_{l} \na_v^{k'}\left(v\cdot (Av)\mu^{\frac{1}{2}}\right)\na_v^{k-k'}\CG_2\bigg\}(V(s))\,ds,\notag
\end{align}
\begin{align}
\CI_{5}=&{\bf1}_{k>0}\si\int_{-\infty}^{t}e^{-\int_{s}^t\CA^\eps(\tau,V(\tau))d\tau}\sum\limits_{0<k'\leq k}C_k^{k'}
\bigg\{-w_l\na_v^{k'}\nu \na^{k-k'}_v\CG_1
+w_{l}\na_v^{k'}(\chi_{M}\CK) \na_v^{k-k'}\CG_1 \bigg\}(V(s))\,ds,\notag
\end{align}
\begin{align}
\CI_{6}=\int_{-\infty}^{t}e^{-\int_{s}^t\CA^\eps(\tau,V(\tau))d\tau}\left(w_{l}\na_v^k\CF_1\right)(V(s))\,ds,\notag
\end{align}
and
\begin{align}
H_{2,k}=\sum\limits_{i=7}^{11}\CI_{i},\notag
\end{align}
with
\begin{align}
\CI_{7}=&\si\int_{-\infty}^{t}e^{-\int_{s}^t\CA^\eps(\tau,V(\tau))d\tau}\left[w_{l}K\left(\frac{H_{2,k}}{w_{l}}\right)\right](V(s))\,ds,
\notag
\end{align}
\begin{align}
\CI_{8}=&\int_{-\infty}^{t}e^{-\int_{s}^t\CA^\eps(\tau,V(\tau))d\tau}\bigg\{{\bf1}_{k'=1}w_{l}\bet' C_k^{k'}\na_v\cdot(\na_v^{k'}v\na_v^{k-k'}\CG_2)
\notag\\&\qquad\qquad\qquad\qquad\qquad\qquad+{\bf1}_{k'=1}\al C_k^{k'}w_l\na_v\cdot(\na_{v}^{k'}(Av)\na_v^{k-k'}\CG_2)\bigg\}(V(s))\,ds,\notag
\end{align}
\begin{align}
\CI_{9}=&\si{\bf1}_{k>0}\int_{-\infty}^{t}e^{-\int_{s}^t\CA^\eps(\tau,V(\tau))d\tau}\sum\limits_{0<k'\leq k}C_k^{k'}
\left\{-w_{l}\na_v^{k'}\nu \na^{k-k'}_v\CG_2+w_l\na_v^{k'}K \na^{k-k'}_v\CG_2\right\}(V(s))\,ds,
\notag
\end{align}
\begin{align}
\CI_{10}=&\si\int_{-\infty}^{t}e^{-\int_{s}^t\CA^\eps(\tau,V(\tau))d\tau}
\left\{\sum\limits_{k'\leq k}C_k^{k'}w_{l}\na_v^{k'}((1-\chi_{M})\mu^{-\frac{1}{2}}\CK) \left(\na_v^{k-k'}\CG_1\right)\right\}(V(s))\,ds,\notag \end{align}
\begin{align}
\CI_{11}=&\int_{-\infty}^{t}e^{-\int_{s}^t\CA^\eps(\tau,V(\tau))d\tau}\left(w_{l}\na_v^k\CF_2\right)(V(s))\,ds,\notag
\end{align}
where
\begin{align}
\CA^\eps(\tau,V(\tau))=\nu(V(\tau))+\eps-3\beta'+2l \bet' \frac{|V(\tau)|^2}{1+|V(\tau)|^2} +2l \al \frac{V(\tau)\cdot (AV(\tau))}{{1+|V(\tau)|^2}}-\al{\rm tr}A\geq \frac{1}{2}\nu(V(\tau)),\notag
\end{align}
provided that $\eps>0$, $\al>0$, $l\al$ and $|\al{\rm tr}A|$ are suitably small.
Note that $\nu(V(\tau))$ is independent of $V(\tau)$ in the Maxwell molecule case.
Here and in the sequel, the velocity derivatives $\na^{k'}_v$ acting on the nonlocal operators such as $\CK$, $K$ etc. are understood in the way as \eqref{der-nop}.

In what follows, we will compute $\CI_i$ $(1\leq i\leq11)$, separately.
The estimates for $\CI_1$ is divided into two cases. If $\ga=0$ i.e. the Maxwell molecule case, we apply \eqref{CK1} in Lemma \ref{CK} to obtain that
\begin{align}\notag
\CI_1\leq \frac{C}{l }\|H_{1,k}(v)\|_{L^\infty}\int_{-\infty}^{t}e^{-\frac{\nu_0}{2}(t-s)}\,ds\leq \frac{C}{l }\|H_{1,k}(v)\|_{L^\infty},
\end{align}
where
\begin{align}
\nu_0=\int_{\R^3}\int_{\S^2}B_0(\cos \ta)\mu(v_\ast)\, d\om dv_\ast>0.\notag
\end{align}
If $0<\ga\leq1$, Lemma \ref{g-ck-lem} leads us to
\begin{align}\notag
\CI_1\leq& \int_{-\infty}^{t}e^{-\int_{s}^t\nu (V(\tau))d\tau}\nu (V(s))[\nu (V(s))]^{-1}\left\{\chi_{M}w_l\CK
\left(\frac{H_{1,k}}{w_{l}}\right)\right\}(V(s))\,ds\notag\\
\leq& \int_{-\infty}^{t}e^{-\int_{s}^t\nu (V(\tau))d\tau}\nu (V(s))
\left(\frac{C}{(1+M)^{\ga/2}}+\varsigma\right)\|H_{1,k}(v)\|_{L^\infty}
\,ds\notag\\
\leq& \left(\frac{C}{(1+M)^{\ga/2}}+\varsigma\right)\|H_{1,k}(v)\|_{L^\infty},\notag
\end{align}
where the following estimate has been used:
\begin{align}
\int_{-\infty}^{t}e^{-\int_{s}^t\nu (V(\tau))d\tau}\nu (V(s))ds\leq1.\notag
\end{align}
By virtue of \eqref{def.betaG}, one has
\begin{align}\notag
\CI_2\leq C\al\|H_{1,0}\|_{L^\infty}+C\al\|H_{2,0}\|_{L^\infty}.
\end{align}
It is straightforward to see that
\begin{align}\notag
 \CI_3\leq C\al\sum\limits_{k'\leq k}\|H_{1,k'}\|_{L^\infty},\ \CI_4,\CI_{8}\leq C\al\sum\limits_{k'\leq k}\|H_{2,k'}\|_{L^\infty}.
\end{align}
For $\CI_5$, we first rewrite $\na_v^{k'}(\chi_{M}\CK)(\na_v^{k-k'}\CG_1)$ as
\begin{align*}
\na_v^{k'}(\chi_{M}\CK) (\na_v^{k-k'}\CG_1)=&\sum\limits_{k''\leq k'}C_{k'}^{k''}\na_v^{k'-k''}\chi_{M}\na_v^{k''}\CK (\na_v^{k-k'}\CG_1)\notag\\
=&\sum\limits_{k''\leq k'}C_{k'}^{k''}\na_v^{k'-k''}\chi_{M}\na_v^{k''}\left\{Q (\mu,\na_v^{k-k'}\CG_1)
+Q (\na_v^{k-k'}\CG_1,\mu)\right\}.
\end{align*}
Then it follows that
\begin{align}\notag
\CI_5{\bf1}_{k\geq1}\leq C\sum\limits_{k'<k}\|H_{1,k'}\|_{L^\infty},
\end{align}
according to Lemma \ref{op.es.lem}. And likewise, we also have
\begin{align}\notag
\CI_{10}\leq C\sum\limits_{k'\leq k}\|H_{1,k'}\|_{L^\infty}.
\end{align}
Next, Lemma \ref{Ga} leads us to have
\begin{align}\notag
\CI_{9}{\bf1}_{k\geq1}\leq C\sum\limits_{k'<k}\|H_{2,k'}\|_{L^\infty}.
\end{align}
For $\CI_6$ and $\CI_{11}$, one directly has
\begin{align}\notag
\CI_6
\leq C\|w_{l}\na_v^k\CF_1\|_{L^\infty}, \ \CI_{11}\leq C\|w_{l}\na_v^k\CF_2\|_{L^\infty}.
\end{align}
Finally, for the delicate term $\CI_{7}$, we divide our computations into the following three cases.

\medskip
\noindent{\it Case 1.} $|V|\geq M$ with $M$ suitably large.
From Lemma \ref{Kop}, it follows that
$$
\int\mathbf{k}_w(V,v_\ast)\,dv_\ast\leq \frac{C}{1+|V|}\leq \frac{C}{M}.
$$
Applying this, one has
\begin{equation}\label{I81}
\CI_{7}\leq \sup\limits_{-\infty<s\leq t}\int_{\R^3}\mathbf{k}_w(V,v_\ast)\,dv_\ast\|H_{2,k}\|_{L^\infty}\leq \frac{C}{M}\|H_{2,k}\|_{L^\infty}.
\end{equation}

\medskip
\noindent{\it Case 2.} $|V|\leq M$ and $|v_\ast|\geq 2M$. In this situation, we have
$|V-v_\ast|\geq M$, then
\begin{equation*}
\mathbf{k}_w(V,v_\ast)
\leq Ce^{-\frac{\vps M^2}{8}}\mathbf{k}_w(V,v_\ast)e^{\frac{\vps |V-v_\ast|^2}{8}}.
\end{equation*}
Using Lemma \ref{Kop}, one sees that
$\int\mathbf{k}_w(V,v_\ast)e^{\frac{\vps |V-v_\ast|^2}{8}}\,dv_\ast$ is still bounded. Therefore,
by a similar argument as for obtaining \eqref{I81}, it follows that
\begin{equation*}
\begin{split}
\CI_{7}\leq Ce^{-\frac{\vps M^2}{8}}\|H_{2,k}\|_{L^\infty}.
\end{split}
\end{equation*}
To complete our estimates for $\CI_{7}$, we are now in a position to handle the last case:

\noindent{\it Case 3.} $|V|\leq M$ and $|v_\ast|\leq 2M$. In this case, the key point is to convert the bound in $L^\infty$-norm to the one in $L^2$-norm which will be established later on. To do so, for any large $M>0$,
we choose a number $p=p(M)$ to define
\begin{equation}\label{km}
\mathbf{k}_{w,p}(V,v_\ast)\equiv \mathbf{1}_{|V-v_\ast|\geq\frac{1}{p},|v_\ast|\leq p}\mathbf{k}_{w}(V,v_\ast),
\end{equation}
such that $\sup\limits_{V}\int_{\mathbf{R}^{3}}|\mathbf{k}_{w,p}(V,v_\ast)
-\mathbf{k}_{w}(V,v_\ast)|\,dv_\ast\leq
\frac{1}{M}.$ One then has
\begin{align*}
\CI_{7}&\leq C\sup\limits_{s}\int_{|v_\ast|\leq 2M}\mathbf{k}_{w,p}(V,v_\ast)|\na_v^k\CG_2(v_\ast)|dv_\ast+\frac{1}{M}\|H_{2,k}\|_{L^\infty}\\
&\leq C(p)\sup\limits_{s}\|\na_v^k\CG_2\|+\frac{1}{M}\|H_{2,k}\|_{L^\infty},
\end{align*}
according to H\"older's inequality and the fact that $\int_{\R^3}\mathbf{k}^2_{w,p}(V,v_\ast)dv_\ast<\infty.$

Therefore, it follows that for any large $M>0$,
\begin{equation}
\begin{split}
\CI_{7}\leq C\left(e^{-\frac{\vps M^2}{8}}+\frac{1}{M}\right)\|H_{2,k}\|_{L^\infty}+C\|\na_v^k\CG_2\|.\notag
\end{split}
\end{equation}
Combing all the estimates above together, we now arrive at
\begin{align}\label{lifn}
\left\{\begin{array}{rll}
&\|H_{1,k}\|_{L^\infty}\leq
\left({\bf1}_{0<\ga\leq 1}\frac{C}{(1+M)^{\ga/2}}+\varsigma+\frac{C}{l}+C\al\right)\|H_{1,k}\|_{L^\infty}+C\al\|H_{1,0}\|_{L^\infty}
\\[4mm]&\qquad\qquad\quad+C\al\sum\limits_{k'\leq k}\|H_{2,k'}\|_{L^\infty}
+{\bf1}_{k\geq1}C\sum\limits_{k'<k}\|H_{1,k'}\|_{L^\infty}
+C\|w_{l}\na_v^k\CF_1\|_{L^\infty},\\[4mm]
&\|H_{2,k}\|_{L^\infty}\leq \left(e^{-\frac{\vps M^2}{8}}+\frac{C}{M}+C\al\right)\|H_{2,k}\|_{L^\infty}+{\bf1}_{k\geq1}C\sum\limits_{k'<k}\|H_{2,k'}\|_{L^\infty}
\\[4mm]&\qquad\qquad\quad+C\sum\limits_{k'\leq k}\|H_{1,k'}\|_{L^\infty}+C\|\na_v^k\CG_2\|+C\|w_{l}\na_v^k\CF_2\|_{L^\infty}.\end{array}\right.
\end{align}
It should be pointed out that the constant $C$ in \eqref{lifn} is independent of $\si$ and $\eps$.

\medskip
\noindent\underline{Step 2. $L^2$ estimates.}
To close our estimates, we now turn to deduce
the $H^k$ estimate on $\CG_2$. To do this, we start from the basic $L^2$ estimate of $\CG_2$.
By the inner product $\lag \eqref{pals}_2, \CG_2\rag$, one has
\begin{align}\label{CG2-l2}
\eps\lag&\CG_2,\CG_2\rag
-\beta'\lag\na_v\cdot (v\CG_2),\CG_2\rag
-\al \lag\na_v\cdot(Av\CG_2),\CG_2\rag
\notag\\&+(1-\si)\lag\nu \CG_2,\CG_2\rag
+\si \lag L\CG_2,\CG_2\rag-\si\lag(1-\chi_{M})\mu^{-\frac{1}{2}}\CK \CG_1,\CG_2\rag=\lag\CF_2,\CG_2\rag,
\end{align}
where we have used the identity
\begin{align}
\nu f-\si Kf=(1-\si)\nu f+\si Lf.\notag
\end{align}
Applying Lemma \ref{es-L} and Cauchy-Schwarz's inequality, we get from \eqref{CG2-l2} that for $l>\frac{3}{2}$
\begin{align}
\eps\|\CG_2\|^2&+(1-\si)\|\CG_2\|^2_\nu+\de_0\si\|\FP_1\CG_2\|^2_\nu\notag\\
\leq& C\al\|\CG_2\|^2+\frac{\eps}{4}\|\CG_2\|^2+\frac{C}{\eps}\|w_l\CG_1\|_{L^\infty}^2+\frac{C}{\eps}\|w_l\CF_2\|_{L^\infty}^2,\notag
\end{align}
which further implies
\begin{align}\label{Cg2-l2z}
\frac{\eps}{2}\|\CG_2\|^2+(1-\si)\|\CG_2\|^2_\nu+\de_0\si\|\FP_1\CG_2\|^2_\nu\leq \frac{C}{\eps}\|w_l\CG_1\|_{L^\infty}^2+\frac{C}{\eps}\|w_l\CF_2\|_{L^\infty}^2,
\end{align}
provided that $0<\al\ll 1-\si$ with $0\leq\si<1.$

To deduce the higher order $L^2$ estimate on $\CG_2$, one gets from
$\lag \na_v^k\eqref{pals}_2, \na_v^k\FP_1\CG_2\rag$ that for $k\geq1$
\begin{align}
\eps\lag&\na_v^k(\FP_1\CG_2+\FP_0\CG_2),\na_v^k\FP_1\CG_2\rag
-\beta'\lag\na_v^k\na_v\cdot (v\FP_1\CG_2),\na_v^k\FP_1\CG_2\rag
\notag\\&-\beta'\lag\na_v^k\na_v\cdot (v\FP_0\CG_2),\na_v^k\FP_1\CG_2\rag
-\al \lag\na_v^k\na_v\cdot(Av\FP_1\CG_2),\na_v^k\FP_1\CG_2\rag
\notag\\&-\al \lag\na_v^k\na_v\cdot(Av\FP_0\CG_2),\na_v^k\FP_1\CG_2\rag
+(1-\si)\lag\nu\na_v^k\FP_1\CG_2,\na_v^k\FP_1\CG_2\rag
\notag\\&+(1-\si)\sum\limits_{1\leq k'\leq k}C_k^{k'}{\lag\na_v^{k'}\nu\na_v^{k-k'}\FP_1\CG_2,\na_v^k\FP_1\CG_2\rag}
+(1-\si)\sum\limits_{k'\leq k}C_k^{k'}{\lag\na_v^{k'}\nu\na_v^{k-k'}\FP_0\CG_2,\na_v^k\FP_1\CG_2\rag}
\notag\\&+\si \lag\na_v^k(L\CG_2),\na_v^k\FP_1\CG_2\rag
-\si\lag\na_v^k[(1-\chi_{M})\mu^{-\frac{1}{2}}\CK \CG_1],\na_v^k\FP_1\CG_2\rag=\lag\na_v^k\CF_2,\na_v^k\FP_1\CG_2\rag,\notag
\end{align}
from which, by using Lemma \ref{es-L} and Cauchy-Schwarz's inequality again, we further obtain
\begin{align}\label{Cg2-l2h}
\eps\|\na_v^k\FP_1\CG_2\|^2&+(1-\si)\|\na_v^k\FP_1\CG_2\|^2_\nu+\de_1\si\|\na_v^k\FP_1\CG_2\|^2_\nu-C\|\FP_1\CG_2\|^2\notag\\
\leq& C(\al+\eta)\|\na_v^k\FP_1\CG_2\|^2+C_\eta\sum\limits_{k'< k}\|\na_v^{k'}\FP_1\CG_2\|^2+C\sum\limits_{k'\leq k}\|w_l\na_v^{k'}\CG_1\|_{L^\infty}^2\notag\\&+C\|w_l\na_v^k\CF_2\|_{L^\infty}^2
+C\|\FP_0\CG_2\|^2,
\end{align}
where $\eta>0$ is suitably small.

As a consequence, a linear combination of \eqref{Cg2-l2z} and \eqref{Cg2-l2h} with $k=1,2,\cdots,m$ yields
\begin{align}\label{Cg2-l2-fin}
\eps\sum\limits_{1\leq k\leq m}&\|\na_v^k\FP_1\CG_2\|^2+\eps\|\FP_0\CG_2\|^2+\la\sum\limits_{k\leq m}\|\na_v^k\FP_1\CG_2\|^2_\nu\notag\\
\leq& C(\eps)\sum\limits_{k\leq m}\|w_l\na_v^{k}\CG_1\|_{L^\infty}^2+C(\eps)\sum\limits_{k\leq m}\|w_l\na_v^k\CF_2\|_{L^\infty}^2.
\end{align}
Finally, taking the linear combination of \eqref{lifn} and \eqref{Cg2-l2-fin} for $0\leq k\leq m$ and adjusting constants, we get
\begin{align*}
\sum\limits_{0\leq k\leq m}\left\{\|H_{1,k}\|_{L^\infty}+\|H_{2,k}\|_{L^\infty}\right\}\leq
C(\eps)\sum\limits_{0\leq k\leq m}\|w_{l}\na_v^k[\CF_1,\CF_2]\|_{L^\infty}.
\end{align*}
This shows the desired estimate \eqref{Lif.es1} and ends the proof of Lemma \ref{lifpri}.
\end{proof}

\subsection{Existence for the linear problem with fixed $\epsilon>0$}
With Lemma \ref{lifpri} in hand, we now turn to prove the existence of solutions to \eqref{pals} with fixed $\eps>0$ in $L^\infty$ framework by the contraction mapping method.

\begin{lemma}\label{ex.pals}
Let all the assumptions of Lemma \ref{lifpri} be satisfied. There is $l_0>0$ such that for any $l\geq l_0$ arbitrarily large, there are $\al_0=\al_0(l)>0$ and large $M=M(l)>0$ such that for any $0<\al<\al_0$,
there exists a unique solution $[\CG_1,\CG_2]\in\bar{\FX}_{m}$
to \eqref{pals} with $\si=1$ satisfying
\begin{align}\label{Lif.es2}
\sum\limits_{0\leq k\leq m}&\left\{\|w_{l}\na_{v}^{k}\CG_1\|_{L^\infty}+\|w_{l}\na_{v}^{k}\CG_2\|_{L^\infty}\right\}
\leq
C\sum\limits_{0\leq k\leq m}\left\{\|w_{l}\na_{v}^{k}\CF_1\|_{L^\infty}+\|w_{l}\na_{v}^{k}\CF_2\|_{L^\infty}\right\}.
\end{align}
\end{lemma}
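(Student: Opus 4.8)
\textbf{Proof plan for Lemma \ref{ex.pals}.}
The plan is to solve \eqref{pals} with $\si=1$ by a homotopy/continuation argument in the parameter $\si$, using Lemma \ref{lifpri} as the uniform \emph{a priori} bound. First I would fix $\eps>0$ small as in Lemma \ref{lifpri}, fix $l\geq l_0$ and $M=M(l)$, and observe that for $\si=0$ the operator $\mathscr{L}_0$ decouples into two scalar transport-type equations
\begin{align*}
\eps\CG_1-\beta'\na_v\cdot(v\CG_1)-\al\na_v\cdot(Av\CG_1)+\nu\CG_1+\tfrac{\beta'}{2}|v|^2\sqrt{\mu}\,\CG_2+\al\tfrac{v\cdot(Av)}{2}\sqrt{\mu}\,\CG_2-\beta''(\CG)\na_v\cdot(v\mu)&=\CF_1,\\
\eps\CG_2-\beta'\na_v\cdot(v\CG_2)-\al\na_v\cdot(Av\CG_2)+\nu\CG_2&=\CF_2,
\end{align*}
which are solvable in $\bar{\FX}_m$ by integrating along the characteristics \eqref{H1CL}: the second equation gives $\CG_2$ directly (the coefficient $\CA^\eps\geq\frac12\nu$ guarantees the integral converges and the weighted $L^\infty$ bounds on all $\na_v^k\CG_2$ follow), and then the first equation, whose only nonlocal feature is the finite-dimensional term $\beta''(\CG)$, is solved by a further fixed-point on the scalar $\beta''$ (or simply by noting it is a rank-one perturbation). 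Thus $\mathscr{L}_0$ is invertible on $\bar{\FX}_m$.

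Next I would run the standard continuity method: let $\Sigma=\{\si\in[0,1]:\mathscr{L}_\si:\bar{\FX}_m\to\bar{\FX}_m\ \text{is invertible}\}$. It is nonempty since $0\in\Sigma$. It is open in $[0,1]$ because $\si\mapsto\mathscr{L}_\si$ is continuous (indeed affine) in operator norm on $\bar{\FX}_m$ — here one uses that $\chi_M\CK$, $K$ and $(1-\chi_M)\mu^{-1/2}\CK$ are bounded operators on the weighted $L^\infty$ spaces by Lemmas \ref{op.es.lem}, \ref{Ga}, \ref{Kop} — so invertibility persists under small perturbations of $\si$. For closedness, suppose $\si_j\to\si_\ast$ with $\si_j\in\Sigma$; given $[\CF_1,\CF_2]\in\bar{\FX}_m$ let $[\CG_1^j,\CG_2^j]=\mathscr{L}_{\si_j}^{-1}[\CF_1,\CF_2]$. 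By the \emph{a priori} estimate \eqref{Lif.es1} of Lemma \ref{lifpri} (valid uniformly for all $\si\in[0,1)$, hence in particular for the sequence $\si_j$), the $\bar{\FX}_m$ norms of $[\CG_1^j,\CG_2^j]$ are bounded by $C_\mathscr{L}\|[\CF_1,\CF_2]\|_{\bar{\FX}_m}$ uniformly in $j$. Using the mild (characteristic) representations $\CI_1,\dots,\CI_{11}$ one extracts a limit $[\CG_1^\ast,\CG_2^\ast]\in\bar{\FX}_m$ solving $\mathscr{L}_{\si_\ast}[\CG_1^\ast,\CG_2^\ast]=[\CF_1,\CF_2]$, and uniqueness again follows from \eqref{Lif.es1} applied to the difference of two solutions (the \emph{a priori} bound forces the zero solution for zero data). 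Hence $\si_\ast\in\Sigma$, so $\Sigma=[0,1]$ and in particular $1\in\Sigma$; the bound \eqref{Lif.es2} is then just \eqref{Lif.es1} at $\si=1$ with the constant $C=C_\mathscr{L}$ depending on $\eps$.

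The one delicate point — and the place I expect the real work to be — is that Lemma \ref{lifpri} is stated only for $\si\in[0,1)$ under the smallness condition $C\al<1-\si$, so the \emph{a priori} constant $C_\mathscr{L}$ there is allowed to blow up as $\si\uparrow 1$ if $\al$ is not shrunk accordingly; to reach $\si=1$ one must either (i) re-examine the estimates \eqref{lifn}–\eqref{Cg2-l2-fin} at $\si=1$ directly, where the loss of the coercive term $(1-\si)\|\CG_2\|_\nu^2$ in \eqref{Cg2-l2z} is compensated by the dissipation $\de_0\si\|\FP_1\CG_2\|_\nu^2$ from $\si L\CG_2$ together with the $\eps\|\FP_0\CG_2\|^2$ penalty, so that \eqref{Cg2-l2-fin} still closes at $\si=1$; or (ii) keep $\si<1$ throughout the continuity argument, obtain a solution for each such $\si$ with $\eps$-dependent but $\si$-uniform bounds once $\al<\al_0(l)$ is fixed small enough that $C\al<1-\si$ holds on the relevant range, and then pass $\si\to1^-$ using compactness exactly as in the closedness step. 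Either way the key input is that, after the two penalty terms are in place and $\al$ is small, the combined $L^\infty$–$L^2$ estimate of Lemma \ref{lifpri} degrades continuously rather than catastrophically as $\si\to1$, which is precisely what the structure of \eqref{it-GR-in} and the choice of $\bar{\FX}_m$ were designed to ensure. Once $1\in\Sigma$ the uniqueness assertion and the estimate \eqref{Lif.es2} are immediate, completing the proof.
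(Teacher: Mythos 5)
Your overall strategy -- solve the decoupled system at $\si=0$ by characteristics and then continue in $\si$ using Lemma \ref{lifpri} as the uniform \emph{a priori} input -- is exactly the paper's strategy, and you have correctly located the crux at the endpoint $\si=1$. Two implementation points differ. First, the paper does not run an open/closed continuity argument with a compactness-based closedness step; it runs a discrete bootstrap: choose $\si_\ast$ with $C_\SL\si_\ast\bar{C}_1\leq\frac12$ and $N\si_\ast=1$ as in \eqref{siast}--\eqref{Nsiast}, and at each stage solve the system with the already-inverted operator $\mathscr{L}_{j\si_\ast}$ on the left and the new increment $\si'\in[0,\si_\ast]$ of the nonlocal terms on the right by contraction mapping in $\bar{\FX}_m$. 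This gives strong convergence and uniqueness for free and avoids extracting limits from bounded sequences in a weighted $L^\infty$ space, where the compactness you invoke in your closedness step is not automatic and would itself need an Arzel\`a--Ascoli argument. Second, and more substantively, of your two proposed fixes for the endpoint only the first is viable: option (ii), fixing $\al$ and letting $\si\to1^-$, is inconsistent with the hypothesis $C\al<1-\si$ of Lemma \ref{lifpri}, so the $\si$-uniform bounds you would need along that sequence are exactly what is unavailable. The paper instead applies the \emph{a priori} estimate only to the frozen operator at level $(N-1)\si_\ast<1$ (where $0<\al\ll 1-(N-1)\si_\ast=\si_\ast$ is a fixed smallness condition, see \eqref{N-1G1}--\eqref{N-1G2}) and handles the final $\si_\ast$-increment perturbatively; the limit solves \eqref{pals} with $\si=1$, at which point the conservation laws $\lag\CG_1,[1,v_i,|v|^2]\rag+\lag\CG_2,[1,v_i,|v|^2]\mu^{1/2}\rag=0$ are restored (for data in $\FX_m$), and it is these -- rather than the $\eps\|\FP_0\CG_2\|^2$ penalty, as you suggest in option (i) -- that control the macroscopic part of $\CG_2$ and let the $L^2$ estimate close at $\si=1$, yielding \eqref{Lif.es2}.
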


\begin{proof} Our proof relies on the {\it a priori} estimate \eqref{Lif.es1} established in Lemma \ref{lifpri} and the bootstrap argument, cf. \cite{DL-2020,DLY-2021,DHWZ-19}.

\noindent\underline{{\it Step 1. Existence for $\si=0$.}}
If $\si=0$, then \eqref{pals} becomes
\begin{align*}
\eps\CG_1&-\beta'\na_v\cdot (v\CG_1)-\al \na_v\cdot(Av\CG_1)
+\nu\CG_1\\
&+\frac{\beta'}{2}|v|^2\sqrt{\mu}\CG_2
+\al\frac{v\cdot (Av)}{2}\sqrt{\mu}\CG_2-\bet''(\CG)\na_v\cdot(v\mu)=\CF_1,
\end{align*}
and
\begin{align*}
\eps\CG_2-\beta'\na_v\cdot (v\CG_2)-\al \na_v\cdot(Av\CG_2)
+\nu\CG_2=\CF_2.
\end{align*}
Then, in this simple case of $\si=0$, since there is no trouble term involving $K$ and
$\CK$, the existence of $L^\infty$-solutions can be easily proved by the characteristic method and the contraction mapping theorem. That is, it follows immediately that
\begin{align}\label{L0}
\|\mathscr{L}_0^{-1}[\CG_1,\CG_2]\|_{\bar{\FX}_{m}}\leq C_\mathscr{L}\|[\CF_1,\CF_2]\|_{\bar{\FX}_{m}}.
\end{align}

\noindent\underline{{\it Step 2. Existence for $\si\in[0,\si_\ast]$ for some $\si_\ast>0$.}}
Letting $\si\in(0,1)$, we now consider
\begin{align}\label{CG1st}
\eps\CG_1&-\beta'\na_v\cdot (v\CG_1)-\al \na_v\cdot(Av\CG_1)
+\nu\CG_1\notag\\
&+\frac{\beta'}{2}|v|^2\sqrt{\mu}\CG_2
+\al\frac{v\cdot (Av)}{2}\sqrt{\mu}\CG_2-\bet''(\CG)\na_v\cdot(v\mu)=\si \chi_{M}\CK \CG_{1}+\CF_1.
\end{align}
and
\begin{align}\label{CG2st}
\eps\CG_2-\beta'\na_v\cdot (v\CG_2)-\al \na_v\cdot(Av\CG_2)
+\nu\CG_2=\si  K\CG_2+\si (1-\chi_{M})\mu^{-\frac{1}{2}}\CK\CG_1+\CF_2,
\end{align}
To verify the well-posedness of the above system, we further design the following approximation equations
\begin{align}\label{CG1stn}
\eps\CG^{n+1}_1&-\beta'\na_v\cdot (v\CG^{n+1}_1)-\al \na_v\cdot(Av\CG^{n+1}_1)
+\nu\CG^{n+1}_1\notag\\
&+\frac{\beta'}{2}|v|^2\sqrt{\mu}\CG^{n+1}_2
+\al\frac{v\cdot (Av)}{2}\sqrt{\mu}\CG^{n+1}_2-\bet''(\CG^{n+1})\na_v\cdot(v\mu)=\si \chi_{M}\CK \CG^{n}_{1}+\CF_{1}
:=\CF_1^{(1)},
\end{align}
and
\begin{align}\label{CG2stn}
\eps\CG^{n+1}_2&-\beta'\na_v\cdot (v\CG^{n+1}_1)-\al \na_v\cdot(Av\CG^{n+1}_1)
+\nu\CG^{n+1}_2\notag\\&=\si  K\CG^{n}_2+\si (1-\chi_{M})\mu^{-\frac{1}{2}}\CK\CG^{n}_1+\CF_2:=\CF^{(1)}_2,
\end{align}
with $[\CG^{0}_1,\CG^{0}_{2}]=[0,0]$. Our goal next is to prove: (i) $[\CG_1^n,\CG_2^n]_{n=0}^\infty$ is uniformly bounded in $\bar{\FX}_{m}$, (ii)
$[\CG_1^n,\CG_2^n]_{n=0}^\infty$ is a Cauchy sequence in $\bar{\FX}_{m}$. Thanks to \eqref{L0}, it follows
\begin{align}\label{siast.re}
\|[\CG_1^{n+1},\CG_2^{n+1}]\|_{\bar{\FX}_{m}}\leq& C_\SL
\|{[\CF_1^{(1)},\CF_2^{(1)}]}\|_{\bar{\FX}_{m}}\\
\leq& C_\SL \si \bar{C}_1\|[\CG_1^{n},\CG_2^{n}]\|_{\bar{\FX}_{m}}
+\underbrace{C_\mathscr{L}\sum\limits_{0\leq k\leq m}\left\{\|w_{q}\na_{v}^k\CF_1\|_{L^\infty}
+\|w_{q}\na_{v}^k\CF_2\|_{L^\infty}\right\}}_{\CM_0},\notag
\end{align}
where $\bar{C}_1>0$ is independent of $\si$ and $n$. Choosing $0<\si_\ast\ll1$ such that
\begin{align}\label{siast}
C_\SL \si_\ast \bar{C}_1\leq\frac{1}{2},
\end{align}
and moreover there exists a positive integer $N$ such that
\begin{align}\label{Nsiast}
N\si_\ast=1.
\end{align}
Then we get from \eqref{siast.re} that
\begin{align}\label{CGumbd}
\|[\CG_1^{n},\CG_2^{n}]\|_{\bar{\FX}_{m}}\leq 2\CM_0,
\end{align}
for all $n\geq0$. Furthermore, by \eqref{CG1stn}, \eqref{CG2stn} and \eqref{siast} and using \eqref{L0} once more,  one has \begin{align}\label{CGcase}
\|[\CG_1^{n+1},\CG_2^{n+1}]-[\CG_1^{n},\CG_2^{n}]\|_{\bar{\FX}_{m}}\leq& C_\SL \si \bar{C}_1\|[\CG_1^{n},\CG_2^{n}]-[\CG_1^{n-1},\CG_2^{n-1}]\|_{\bar{\FX}_{m}}\notag\\
\leq&\frac{1}{2}\|[\CG_1^{n},\CG_2^{n}]-[\CG_1^{n-1},\CG_2^{n-1}]\|_{\bar{\FX}_{m}}.
\end{align}
As a consequence, \eqref{CGcase} and \eqref{CGumbd} imply that the system \eqref{CG1st} and \eqref{CG2st} admits a unique solution $[\CG_1,\CG_2]\in\bar{\FX}_{m}$ for all $\si\in[0,\si_\ast].$
Moreover, utilizing Lemma \ref{lifpri}, for such a solution, we actually have the following uniform estimate
\begin{align}
\|[\CG_1,\CG_2]\|_{\bar{\FX}_{m}}\leq
C_\mathscr{L}\sum\limits_{0\leq k\leq m}\left\{\|w_{l}\na_{v}^k\CF_1\|_{L^\infty}
+\|w_{l}\na_{v}^k\CF_2\|_{L^\infty}\right\},\notag
\end{align}
which is also equivalent to
\begin{align}\label{Last}
\|\mathscr{L}_{\si_\ast}^{-1}[\CF_1,\CF_2]\|_{\bar{\FX}_{m}}\leq C_\mathscr{L}\|[\CF_1,\CF_2]\|_{\bar{\FX}_{m}}.
\end{align}

\noindent\underline{{\it Step 3. Existence for $\si\in[0,2\si_\ast]$ for some $\si_\ast>0$.}} By using \eqref{Last} and performing the similar calculations as for obtaining \eqref{CGumbd} and \eqref{CGcase}, for $\si'\in[0,\si_\ast],$ one can see that there exists a unique solution $[\CG_1,\CG_2]\in\bar{\FX}_{m}$ to the lifting system
\begin{align}\notag
\eps\CG_1&-\beta'\na_v\cdot (v\CG_1)-\al \na_v\cdot(Av\CG_1)
+\nu\CG_1+\frac{\beta'}{2}|v|^2\sqrt{\mu}\CG_2
+\al\frac{v\cdot (Av)}{2}\sqrt{\mu}\CG_2\notag\\
&-\bet''(\CG)\na_v\cdot(v\mu)-\si_\ast \chi_{M}\CK \CG_{1}=\si' \chi_{M}\CK \CG_{1}+\CF_1,\notag
\end{align}
and
\begin{align}
\eps\CG_2&-\beta'\na_v\cdot (v\CG_2)-\al \na_v\cdot(Av\CG_2)
+\nu\CG_2-\si_\ast  K\CG_2-\si_\ast (1-\chi_{M})\mu^{-\frac{1}{2}}\CK\CG_1\notag\\&=\si'  K\CG_2+\si' (1-\chi_{M})\mu^{-\frac{1}{2}}\CK\CG_1+\CF_2.\notag
\end{align}
In other words, we have proved the existence of $\mathscr{L}^{-1}_{2\si_\ast}$ on $\bar{\FX}_{m}$ and \eqref{Lif.es1} holds true for $\si=2\si_\ast$.

\noindent\underline{{\it Step 4. Existence for $\si=1$.}} In this final step, we shall show how to extend the existence of $\mathscr{L}^{-1}_{2\si_\ast}$ to the one of $\mathscr{L}^{-1}_1$ by the above procedure. As a matter of fact, using \eqref{Nsiast} and repeating Step 3 $N-2$ times, we can prove that $\mathscr{L}^{-1}_{(N-1)\si_\ast}$ is well-defined. With this, we then consider the following
lifting system
\begin{align}
\eps\CG_1&-\beta'\na_v\cdot (v\CG_1)-\al \na_v\cdot(Av\CG_1)
+\nu\CG_1+\frac{\beta'}{2}|v|^2\sqrt{\mu}\CG_2
+\al\frac{v\cdot (Av)}{2}\sqrt{\mu}\CG_2\notag\\
&-\bet''(\CG)\na_v\cdot(v\mu)-(N-1)\si_\ast \chi_{M}\CK \CG_{1}=\si' \chi_{M}\CK \CG_{1}+\CF_1,\label{N-1G1}
\end{align}
and
\begin{align}
\eps\CG_2&-\beta'\na_v\cdot (v\CG_2)-\al \na_v\cdot(Av\CG_2)
+\nu\CG_2-(N-1)\si_\ast  K\CG_2-(N-1)\si_\ast (1-\chi_{M})\mu^{-\frac{1}{2}}\CK\CG_1\notag\\&=\si'  K\CG_2+\si' (1-\chi_{M})\mu^{-\frac{1}{2}}\CK\CG_1+\CF_2,\label{N-1G2}
\end{align}
where $\si'\in[0,\si_\ast].$
Notice that we still have $(N-1)\si_\ast<1$ in the above system and as in \eqref{Cg2-l2z} we may let $0<\al\ll 1-(N-1)\si_\ast$. Then as Step 2, we can further verify that \eqref{N-1G1} and \eqref{N-1G2} possess a unique solution $[\CG_1,\CG_2]\in\bar{\FX}_{m}$ for $\si'=\si_\ast.$  Thus $\mathscr{L}^{-1}_1$ is also well-defined. We emphasize that
the solution we constructed here satisfies
\begin{align}
\eps\CG_1&-\beta'\na_v\cdot (v\CG_1)-\al \na_v\cdot(Av\CG_1)
+\nu\CG_1+\frac{\beta'}{2}|v|^2\sqrt{\mu}\CG_2
+\al\frac{v\cdot (Av)}{2}\sqrt{\mu}\CG_2\notag\\
&-\bet''(\CG)\na_v\cdot(v\mu)-\chi_{M}\CK \CG_{1}=\CF_1,\notag
\end{align}
and
\begin{align}
\eps\CG_2&-\beta'\na_v\cdot (v\CG_2)-\al \na_v\cdot(Av\CG_2)
+\nu\CG_2-K\CG_2- (1-\chi_{M})\mu^{-\frac{1}{2}}\CK\CG_1=\CF_2,\notag
\end{align}
which actually implies $[\CG_1,\CG_2]\in \FX_{m}$ if $[\CF_1,\CF_2]\in \FX_{m}$. Therefore, by performing the similar calculation as in the next subsection, we can still show the uniform bound as \eqref{Lif.es2}. This ends the proof of Lemma \ref{ex.pals}.
\end{proof}

\subsection{The remainder}

We are ready to complete the
\begin{proof}[Proof of Theorem \ref{st.sol}]
Since $G_1$ is given explicitly as \eqref{G1-exp}, to complete the proof of Theorem \ref{st.sol}, it suffices now to determine $G_R$ by proving the existence of the coupled system \eqref{GR1} and \eqref{GR2} under the constraint
\begin{align}
\lag G_{R,1},[1,v_i,|v|^2]\rag+\lag G_{R,2},[1,v_i,|v|^2]\mu^{\frac{1}{2}}\rag=0,\ i=1,2,3.\label{GR12-con}
\end{align}
To do this, let us first go back to the approximation system \eqref{gr12-ls}.
By applying Lemma \ref{ex.pals}, for fixed $\eps>0$, we see that $[G_{R,1}^{n+1},G_{R,2}^{n+1}]$ is well defined once $[G_{R,1}^{n},G_{R,2}^{n}]$ is given and belongs to $\bar{\FX}_{m}$ for any $m\geq0$. Furthermore, if $[G_{R,1}^{n},G_{R,2}^{n}]$ satisfies \eqref{GR12-con},
so does $[G_{R,1}^{n+1},G_{R,2}^{n+1}]$.
We now verify that $\{G_{R,1}^{n},G_{R,2}^{n}\}_{n=0}^\infty$ is a Cauchy sequence in $\FX_{m-1}$ with $m\geq1$, hence it is convergent and the limit denoted by $[G^{\eps}_{R,1},G^{\eps}_{R,2}]$ is the unique solution of the following system
\begin{align}
\eps G^\eps_{R,1}&-\beta^\eps\na_v\cdot(vG^\eps_{R,1})-\al\na_v\cdot(AvG^\eps_{R,1})
+\nu G^\eps_{R,1}\notag\\=&\chi_M\CK G^\eps_{R,1}-\frac{\beta^\eps}{2}|v|^2\sqrt{\mu}G^\eps_{R,2}-\frac{\al}{2}v\cdot(Av)\sqrt{\mu}G^\eps_{R,2}
+\frac{\beta^\eps}{\al}\na_v\cdot(v\sqrt{\mu}G_1)+\beta^\eps_1\na_v\cdot(v\mu)\notag\\&+\na_v\cdot(Av\sqrt{\mu}G_1)
+Q(\sqrt{\mu}G_1,\sqrt{\mu}G_1)+\al\{Q(\sqrt{\mu}G_1,\sqrt{\mu}G^\eps_R)
+Q(\sqrt{\mu}G^\eps_R,\sqrt{\mu}G_1)\}\notag\\&+\al^2Q(\sqrt{\mu}G^\eps_R,\sqrt{\mu}G^\eps_R)
:=\CN_\eps,\notag
\end{align}
and
\begin{align}
\eps G^\eps_{R,2}-&\beta^\eps\na_v\cdot(vG^\eps_{R,2})-\al \na_v\cdot(Av G^\eps_{R,2})
+LG^\eps_{R,2}=\mu^{-\frac{1}{2}}(1-\chi_M)\CK G^\eps_{R,1},\notag
\end{align}
where
$$
\beta^\eps=\al\beta_0+\al^2\beta^\eps_1,\
\beta^\eps_1=\frac{1}{3}\int_{\R^3} G_1LG_1\,dv
-\frac{\al}{3}\int_{\R^3} \FP_1\{ v\cdot (Av)\sqrt{\mu}\}G_R^{\eps}\,dv,
$$
and
$$
\sqrt{\mu}G_R^{\eps}=G^\eps_{R,1}+\sqrt{\mu}G^\eps_{R,2}.
$$
The key point here is that we can prove that the convergence of the sequence $\{G_{R,1}^{n},G_{R,2}^{n}\}_{n=0}^\infty$ is independent of $\eps$.
To see this, we first show the following uniform bound
\begin{align}\label{umbd}
\|[G^{n}_{R,1},G^{n}_{R,2}]\|_{\FX_{m}}\leq 2\CC_0,
\end{align}
where $\CC_0>0$ is independent of $\eps$, $n$ and $\al$. 
We give the proof by induction on $n\geq0$. Notice that $[G^{0}_{R,1},G^{0}_{R,2}]=[0,0]$, if $n=0$ the system \eqref{gr12-ls} reads
\begin{eqnarray}\label{gr12-0}
\left\{\begin{array}{rll}
\begin{split}
\eps G^{1}_{R,1}&-\beta^{0}\na_v\cdot (vG^{1}_{R,1})-\al \na_v\cdot(AvG^{1}_{R,1})
+\nu G^{1}_{R,1}
-\chi_{M}\CK G^{1}_{R,1}\\&+\frac{\beta^0}{2}|v|^2\mu^{\frac{1}{2}}G^{1}_{R,2}
+\frac{\al}{2} v\cdot(Av)\mu^{\frac{1}{2}}G^{1}_{R,2}
-\left(\beta_1^{1}-\frac{1}{3}\lag G_1,LG_1\rag \right)\na_v\cdot(v\mu)
\\=&\frac{1}{3}\lag G_1,LG_1\rag \na_v\cdot(v\mu)
+\frac{\beta^0}{\al}\na_v\cdot(v\mu^{\frac{1}{2}}G_1)+\na_v\cdot(Av\sqrt{\mu}G_1)+
Q(\mu^{\frac{1}{2}}G_1,\mu^{\frac{1}{2}}G_1),\\
\eps G^{1}_{R,2}&-\beta^0\na_v\cdot (v G^{1}_{R,2})-\al \na_v\cdot(AvG^{1}_{R,2})
+LG^{1}_{R,2}-(1-\chi_{M})\mu^{-\frac{1}{2}}\CK G^{1}_{R,1}=0,
\end{split}
\end{array}\right.
\end{eqnarray}
where $\beta^0$ and $\beta_1^1$ are defined as \eqref{bt1n}.
Performing the similar calculation as for obtaining \eqref{lifn}, one has
\begin{align}\label{GR1-1}
\sum\limits_{0\leq k\leq m}\|w_l\na_{v}^kG_{R,1}^1\|_{L^\infty}
\leq& C\al\sum\limits_{0\leq k\leq m}\|w_l\na_{v}^kG_{R,2}^1\|_{L^\infty}
+C,
\end{align}
and
\begin{align}\label{GR2-1}
\sum\limits_{0\leq k\leq m}\|w_l\na_{v}^kG_{R,2}^1\|_{L^\infty}\leq& C\sum\limits_{0\leq k\leq m}\|\na_{v}^kG^1_{R,2}\|
+C\sum\limits_{0\leq k\leq m}\|w_l\na_{v}^kG_{R,1}^{1}\|_{L^\infty},
\end{align}
where the constant $C>0$ is independent of $\eps$.

We now turn to deduce the $H^k$ estimate on $G^1_{R,2}$. To obtain the desired estimate which is independent of $\eps$, the conservation law \eqref{GR12-con}
plays a crucial role. As a matter of fact, by the iteration scheme \eqref{gr12-0}, it is not difficulty to check that
\begin{align}
\lag G^1_{R,1},[1,v_i,|v|^2]\rag+\lag G^1_{R,2},[1,v_i,|v|^2]\mu^{\frac{1}{2}}\rag=0,\ i=1,2,3,\label{GR12-1-con}
\end{align}
for $\eps>0.$ We emphasize that \eqref{GR12-1-con} may not be true in the framework of \eqref{pals} with $0<\ga\leq1$ and $\si\neq1$.

Next, we denote for $n\geq1$
\begin{align}\label{abc.def}
\bar{\FP}_0G^n_{R,1}=(a^n_{1}+\Fb^n_{1}\cdot v+c^n_{1}(|v|^2-3))\mu,\ \FP_0G^n_{R,2}=(a^n_{2}+\Fb^n_{2}\cdot v+c^n_{2}(|v|^2-3))\sqrt{\mu}.
\end{align}
Here and in the sequel, we use the notation
\begin{align}
\Fb^n_{i}=[b^n_{i,1},b^n_{i,2},b^n_{i,3}],\ i=1,2.\notag
\end{align}
From \eqref{GR12-1-con}, one has
\begin{align}
a^1_1+a^1_2=0,\ \Fb^1_1+\Fb^1_2=0,\ c^1_1+c^1_2=0.\notag
\end{align}
Consequently, it follows
\begin{align}
\|\FP_0G^1_{R,2}\|\lesssim |[a^1_2,\Fb^1_2,c^1_2]|\leq|[a^1_1,\Fb^1_1,c^1_1]|\lesssim\|w_lG^1_{R,1}\|_{L^\infty},\label{P0Gr2-1}
\end{align}
for $l>5/2.$
On the other hand, for the microscopic component of $G^1_{R,2}$, we get from the inner product
$\lag \na_v^k\eqref{gr12-0}_2,\na_v^k\FP_1G^1_{R,2}\rag$ with $k\geq0$ that
\begin{align}
\eps\lag&\na_v^k(\FP_1G^1_{R,2}+\FP_0G^1_{R,2}),\na_v^k\FP_1G^1_{R,2}\rag
-\beta^0\lag\na_v^k\na_v\cdot (v\FP_1G^1_{R,2}),\na_v^k\FP_1G^1_{R,2}\rag
\notag\\&-\beta^0\lag\na_v^k\na_v\cdot (v\FP_0G^1_{R,2}),\na_v^k\FP_1G^1_{R,2}\rag
-\al \lag\na_v^k\na_v\cdot(Av\FP_1G^1_{R,2}),\na_v^k\FP_1G^1_{R,2}\rag
\notag\\&-\al \lag\na_v^k\na_v\cdot(Av\FP_0G^1_{R,2}),\na_v^k\FP_1G^1_{R,2}\rag
+\lag\na_v^k L\FP_1G^1_{R,2},\na_v^k\FP_1G^1_{R,2}\rag
\notag\\&-\lag\na_v^k[(1-\chi_{M})\mu^{-\frac{1}{2}}\CK G^1_{R,1}],\na_v^k\FP_1G^1_{R,2}\rag=0.\notag
\end{align}
Using Lemma \ref{es-L} and Cauchy-Schwarz's inequality as well as \eqref{P0Gr2-1}, one gets
\begin{equation}\label{P1Gr2-1}
(\eps+\de_0)\|\na_v^k\FP_1G^1_{R,2}\|^2\leq
C(\eps+\al)\|w_lG^1_{R,1}\|^2_{L^\infty}+C\sum\limits_{k'\leq k}\|w_l\na_v^{k'}G^1_{R,1}\|^2_{L^\infty}
+C{\bf1}_{k>0}\|\FP_1G^1_{R,2}\|^2.
\end{equation}
Taking a linear combination of \eqref{P1Gr2-1} with respect to $k=0,1,\cdots,m$ and applying \eqref{P0Gr2-1}, we arrive at
\begin{align}\label{Gr2-1-1}
\|\FP_0G^1_{R,2}\|^2+\sum\limits_{k\leq m}\|\na_v^k\FP_1G^1_{R,2}\|^2\leq
C\sum\limits_{k\leq m}\|w_l\na_v^{k}G^1_{R,1}\|^2_{L^\infty}.
\end{align}
Therefore, by plugging this into \eqref{GR2-1} and using \eqref{GR1-1}, we finally obtain
\begin{align}
\sum\limits_{0\leq k\leq m}\|w_l\na_{v}^kG_{R,1}^1\|_{L^\infty}+\sum\limits_{0\leq k\leq m}\|w_l\na_{v}^kG_{R,2}^1\|_{L^\infty}
\leq\CC_0,\notag
\end{align}
for some suitably large $\CC_0>0$. This implies that \eqref{umbd} is true for $n=1$.

We now assume that \eqref{umbd} is valid for $n=N$ and then prove that \eqref{umbd}  holds for $n=N+1$.
In fact, applying the estimates \eqref{lifn} to the system \eqref{gr12-ls} with $n=N$, one has
\begin{align}\label{GRk1}
\sum\limits_{0\leq k\leq m}\|w_l\na_v^kG_{R,1}^{N+1}\|_{L^\infty}\leq C\al\sum\limits_{0\leq k\leq m}\|w_l\na_{v}^kG_{R,2}^{N+1}\|_{L^\infty}
+C\sum\limits_{0\leq k\leq m}\|w_{l}\na_{v}^k\CS^N\|_{L^\infty},
\end{align}
and
\begin{align}\label{GRk2}
\sum\limits_{0\leq k\leq m}\|w_l\na_{v}^kG_{R,2}^{N+1}\|_{L^\infty}\leq& C\sum\limits_{0\leq k\leq m}\|\na_{v}^kG_{R,2}^{N+1}\|
+C\sum\limits_{0\leq k\leq m}\|w_l\na_{v}^kG_{R,1}^{N+1}\|_{L^\infty},
\end{align}
where
\begin{align}
\CS^N=&\frac{1}{3}\lag G_1,LG_1\rag \na_v\cdot(v\mu)+\frac{\beta^N}{\al}\na_v\cdot(v\mu^{\frac{1}{2}}G_1)+\na_v\cdot(Av\sqrt{\mu}G_1)+
Q(\mu^{\frac{1}{2}}G_1,\mu^{\frac{1}{2}}G_1)\notag\\&+\al\{Q(\mu^{\frac{1}{2}}G^{N}_R,\mu^{\frac{1}{2}}G_1)
+Q(\mu^{\frac{1}{2}}G_1,\mu^{\frac{1}{2}}G^{N}_R)\}+\al^2 Q(\mu^{\frac{1}{2}}G^N_R,\mu^{\frac{1}{2}}G^N_R).\notag
\end{align}
Recall \eqref{G1-exp}. By employing Lemma \ref{op.es.lem} and the induction hypothesis, one has
\begin{align}
\sum\limits_{0\leq k\leq m}\|w_{l}\na_{v}^k\CS^N\|_{L^\infty}\leq C+C\al\CC_0+C\al^2\CC_0^2.\label{SN}
\end{align}
On the other hand, since $[G_{R,1}^{N},G_{R,2}^{N}]\in\FX_m$, from \eqref{gr12-ls}, it also follows
\begin{align}\notag
\lag G^{N+1}_{R,1},[1,v_i,|v|^2]\rag+\lag G^{N+1}_{R,2},[1,v_i,|v|^2]\mu^{\frac{1}{2}}\rag=0,\ i=1,2,3,
\end{align}
for $\eps>0.$ Based on this, as the estimate \eqref{Gr2-1-1}, one has
\begin{align}\label{Gr2-N-l2}
\|\FP_0G^{N+1}_{R,2}\|^2+\sum\limits_{k\leq m}\|\na_v^k\FP_1G^{N+1}_{R,2}\|^2\leq
C\sum\limits_{k\leq m}\|w_l\na_v^{k}G^{N+1}_{R,1}\|^2_{L^\infty}.
\end{align}
Substituting \eqref{SN} and \eqref{Gr2-N-l2} into \eqref{GRk1} and \eqref{GRk2}, we get
\begin{align}\label{GR12-N1}
\sum\limits_{0\leq k\leq m}\|w_l\na_v^kG_{R,1}^{N+1}\|_{L^\infty}+
\sum\limits_{0\leq k\leq m}\|w_l\na_v^kG_{R,2}^{N+1}\|_{L^\infty}
\leq C_0+C\al\CC_0+C\al^2\CC_0^2\leq 2\CC_0.
\end{align}
Hence \eqref{umbd} is valid for all $n\geq0.$

Having disposed of the above preliminary step, we now turn to prove that $[G_{R,1}^{n},G_{R,2}^{n}]|_{n=1}^{\infty}$
is a Cauchy sequence in the larger function space $\FX_{m-1}.$ For this purpose, we first denote
$$
[\tilde{G}_{R,1}^{n},\tilde{G}_{R,2}^{n}]=[G_{R,1}^{n}-G_{R,1}^{n-1},G_{R,2}^{n}-G_{R,2}^{n-1}],
\ \tilde{\beta}^n=\beta^n-\beta^{n-1},\ n\geq1,
$$
then by \eqref{gr12-ls}, we see that the triple $[\tilde{G}_{R,1}^{n},\tilde{G}_{R,2}^{n},\tilde{\beta}^n]$ satisfies
\begin{eqnarray}
\left\{\begin{array}{rll}
\begin{split}
\eps \tilde{G}^{n+1}_{R,1}&-\beta^{n}\na_v\cdot (v\tilde{G}^{n+1}_{R,1})-\al \na_v\cdot(Av\tilde{G}^{n+1}_{R,1})
+\nu \tilde{G}^{n+1}_{R,1}
-\chi_{M}\CK \tilde{G}^{n+1}_{R,1}\\&+\frac{\beta^n}{2}|v|^2\mu^{\frac{1}{2}}\tilde{G}^{n+1}_{R,2}+\frac{\al}{2} v\cdot(Av)\mu^{\frac{1}{2}}\tilde{G}^{n+1}_{R,2}-\tilde{\beta}_1^{n+1}\na_v\cdot(v\mu)
\\=&\tilde{\beta}^{n}\na_v\cdot (vG^{n}_{R,1})-\frac{\tilde{\beta}^n}{2}|v|^2\mu^{\frac{1}{2}}G^{n}_{R,2}
+\frac{\tilde{\beta}^n}{\al}\na_v\cdot(v\mu^{\frac{1}{2}}G_1)
\notag\\&+\al\{Q(\mu^{\frac{1}{2}}\tilde{G}^{n}_R,\mu^{\frac{1}{2}}G_1)
+Q(\mu^{\frac{1}{2}}G_1,\mu^{\frac{1}{2}}\tilde{G}^{n}_R)\}+\al^2 Q(\mu^{\frac{1}{2}}\tilde{G}^n_R,\mu^{\frac{1}{2}}\tilde{G}^n_R)
\notag\\&+\al^2 Q(\mu^{\frac{1}{2}}\tilde{G}^n_R,\mu^{\frac{1}{2}}G^n_R){+\al^2 Q(\mu^{\frac{1}{2}}G^n_R,\mu^{\frac{1}{2}}\bar{G}^n_R)},\\
\eps \tilde{G}^{n+1}_{R,2}&-\beta^n\na_v\cdot (v \tilde{G}^{n+1}_{R,2})-\al \na_v\cdot(Av\tilde{G}^{n+1}_{R,2})
+L\tilde{G}^{n+1}_{R,2}\notag\\&-(1-\chi_{M})\mu^{-\frac{1}{2}}\CK \tilde{G}^{n+1}_{R,1}=\tilde{\beta}^n\na_v\cdot (v G^{n}_{R,2}).
\end{split}
\end{array}\right.
\end{eqnarray}
Note that from \eqref{bt1n} one actually has $\tilde{\beta}^n=\al^2\tilde{\beta}_1^n=\al^2(\beta_1^n-\beta_1^{n-1}).$
Since both $[G_{R,1}^{n},G_{R,2}^{n}]$ and $[G_{R,1}^{n+1},G_{R,2}^{n+1}]$ satisfy \eqref{GR12-con}, so does their difference $[\tilde{G}^{n+1}_{R,1},\tilde{G}^{n+1}_{R,2}]$. With this,
we can proceed analogously to the deduction of \eqref{GR12-N1} to obtain that
\begin{align}
\sum\limits_{k\leq m-1}\{\|w_l\na_v^k\tilde{G}^{n+1}_{R,1}\|_{L^\infty}+\|w_l\na_v^k\tilde{G}^{n+1}_{R,2}\|_{L^\infty}\}\leq& C\al|\tilde{\beta}_1^{n}|+C\al\sum\limits_{k\leq m-1}\{\|w_l\tilde{G}^{n}_{R,1}\|_{L^\infty}+\|w_l\tilde{G}^{n}_{R,2}\|_{L^\infty}\}
\notag\\&+C\al\sum\limits_{k\leq m-1}\{\|w_l\tilde{G}^{n}_{R,1}\|_{L^\infty}+\|w_l\tilde{G}^{n}_{R,2}\|_{L^\infty}\}^2,\notag
\end{align}
which is equivalent to
\begin{align}\label{Gr12-ca}
\|[\tilde{G}^{n+1}_{R,1},\tilde{G}^{n+1}_{R,2}]\|_{\FX_{m-1}}\leq C\al\|[\tilde{G}^{n}_{R,1},\tilde{G}^{n}_{R,2}]\|_{\FX_{m-1}}.
\end{align}
Therefore $[G_{R,1}^{n},G_{R,2}^{n}]$ converges strongly to some function pair $[G_{R,1}^{\eps},G_{R,2}^{\eps}]\in\FX_{m-1}$. Moreover, from \eqref{umbd}, it also follows
\begin{align}\label{eps-umbd}
\|[G^\eps_{R,1},G^\eps_{R,2}]\|_{\FX_{m}}\leq 2\CC_0.
\end{align}
We shall have established the theorem if we prove that $[G_{R,1}^{\eps},G_{R,2}^{\eps}]\rightarrow [G_{R,1},G_{R,2}]$ as $\eps\rightarrow0^+.$
For this,  we
choose a positive sequence $\{\eps_n\}_{n=1}^\infty$ such that $|\eps_{n+1}-\eps_{n}|\leq 2^{-n}$,
then $\eps_n\rightarrow0^+$ as $n\to+\infty$. We consider the following approximation equations
\begin{align}
\eps_{n} G^{\eps_n}_{R,1}&-\beta^{\eps_n}\na_v\cdot(vG^{\eps_n}_{R,1})-\al\na_v\cdot(AvG^{\eps_n}_{R,1})
+\nu G^{\eps_n}_{R,1}\notag\\=&\chi_M\CK G^{\eps_n}_{R,1}-\frac{\beta^{\eps_n}}{2}|v|^2\sqrt{\mu}G^{\eps_n}_{R,2}-\frac{\al}{2}v\cdot(Av)\sqrt{\mu}G^{\eps_n}_{R,2}
+\frac{\beta^{\eps_n}}{\al}\na_v\cdot(v\sqrt{\mu}G_1)+\beta^{\eps_n}_1\na_v\cdot(v\mu)\notag\\&+\na_v\cdot(Av\sqrt{\mu}G_1)
+Q(\sqrt{\mu}G_1,\sqrt{\mu}G_1)+\al\{Q(\sqrt{\mu}G_1,\sqrt{\mu}G^\eps_R)
+Q(\sqrt{\mu}G^{\eps_n}_R,\sqrt{\mu}G_1)\}\notag\\&+\al^2Q(\sqrt{\mu}G^{\eps_n}_R,\sqrt{\mu}G^{\eps_n}_R),\notag
\end{align}
and
\begin{align}
\eps_n G^{\eps_n}_{R,2}-&\beta^{\eps_n}\na_v\cdot(vG^{\eps_n}_{R,2})-\al \na_v\cdot(Av G^{\eps_n}_{R,2})
+LG^{\eps_n}_{R,2}=\mu^{-\frac{1}{2}}(1-\chi_M)\CK G^{\eps_n}_{R,1}.\notag
\end{align}
Since each pair $[G^{\eps_n}_{R,1},G^{\eps_n}_{R,2}]$ is well-defined and satisfies \eqref{eps-umbd}, we have as the estimate \eqref{Gr12-ca} that
$$
\|[G^{\eps_n}_{R,1}-G^{\eps_{n-1}}_{R,1},G^{\eps_n}_{R,2}-G^{\eps_{n-1}}_{R,2}]\|_{\FX_{m-1}}\leq C|\eps_n-\eps_{n-1}|,\ n\geq1.
$$
Thus $[G_{R,1}^{\eps_n},G_{R,2}^{\eps_n}]\rightarrow [G_{R,1},G_{R,2}]$ as $\eps_n\rightarrow0^+$. Moreover it holds that
$[G_{R,1},G_{R,2}]\in\FX_m$ satisfies the same estimate as \eqref{eps-umbd}. This proves \eqref{st.sole2}. The non-negativity of the steady solution $G_{st}=\mu+\al\sqrt{\mu}(G_1+\al G_R)$ constructed here is a direct subsequence of the dynamical stability
of $G_{st}(v)$ verified in Theorem \ref{ge.th}. This ends the proof of Theorem \ref{st.sol}.
\end{proof}

\section{Unsteady problem}\label{ust-pro}
In this section, we turn to the time-dependent case. Our goal is to prove that the large time behavior of the Cauchy problem \eqref{G-ust} and \eqref{G-id} can be governed by the steady problem \eqref{G-st} which has been solved in Section \ref{st-pro}. The proof is based on the local-in-time existence and the {\it a priori} estimate as well as the continuum argument.


The local-in-time existence of the Cauchy problem \eqref{G-ust} and \eqref{G-id} will be established by an iteration method and Duhamel's principle.
Set $G=G_{st}+\sqrt{\mu}f$, then we see that $f$ satisfies
\begin{align}\label{f-eq}
\pa_t f&+v\cdot\na_x f-\beta\mu^{-1/2}\na_v\cdot(v\sqrt{\mu}f)-\al\mu^{-1/2}\na_v\cdot(Av\sqrt{\mu}f)+Lf
\notag\\&=\Ga(f,f)+\al\{\Ga(G_1+\al G_R,f)+\Ga(f,G_1+\al G_R)\},\ t>0, \ x\in\T^3,\ v\in\R^3,
\end{align}
with
\begin{align}\label{f-id}
\sqrt{\mu}f(0,x,v)\eqdef f_{0}(x,v)=G_0(x,v)-G_{st}(v),\ x\in\T^3,\ v\in\R^3.
\end{align}
As it is pointed out in Section \ref{st-pro}, to eliminate the severe velocity growth in the left hand side of \eqref{f-eq}, it is necessary to use the following Caflisch's decomposition
$$
\sqrt{\mu}f=f_1+\sqrt{\mu}f_2,
$$
where $f_1$ and $f_2$ satisfy
\begin{align}\label{f1-eq}
\pa_t f_1&+v\cdot\na_x f_1-\beta\na_v\cdot(vf_1)-\al\na_v\cdot(Avf_1)+\nu f_1
\notag\\=&\chi_M\CK f_1-\frac{\beta}{2} |v|^2\sqrt{\mu}f_2-\frac{\al}{2} v\cdot(Av)\sqrt{\mu}f_2
+{\al Av\cdot(\na_v\sqrt{\mu})(|v|^2-3)\sqrt{\mu}c_{f_2}}
\notag\\&+Q(f_1,f_1)+Q(f_1,\sqrt{\mu}f_2)+Q(\sqrt{\mu}f_2,f_1)\notag\\&+\al\{Q(\sqrt{\mu}(G_1+\al G_R),\sqrt{\mu}f)
+Q(\sqrt{\mu}f,\sqrt{\mu}(G_1+\al G_R))\},\ t>0,\ x\in\T^3,\ v\in\R^3,
\end{align}
\begin{align}\label{f1-id}
f_1(0,x,v)=f_0(x,v)=G_0(x,v)-G_{st}(v),\ x\in\T^3,\ v\in\R^3,
\end{align}
\begin{align}\label{f2-eq}
\pa_t f_2&+v\cdot\na_x f_2-\beta\na_v\cdot(vf_2)-\al\na_v\cdot(Av f_2)
{+\al Av\cdot(\na_v\sqrt{\mu})(|v|^2-3)\sqrt{\mu}c_{f_2}}
\notag\\&+Lf_2
=(1-\chi_M)\mu^{-\frac{1}{2}}\CK f_1+\Ga(f_2,f_2),\ t>0,\ x\in\T^3,\ v\in\R^3,
\end{align}
and
\begin{align}\label{f2-id}
f_2(0,x,v)=0,\ x\in\T^3,\ v\in\R^3,
\end{align}
respectively. Here, $c_{f_2}$ is defined as
$$
\FP_0f_2=\{a_{f_2}(t,x)+\Fb_{f_2}(t,x)\cdot v+c_{f_2}(t,x)(|v|^2-3)\}\sqrt{\mu}.
$$
To determine $f$, we instead turn to solve $f_1$ and $f_2$ through the above system.

We shall look for solutions of \eqref{f1-eq}, \eqref{f1-id}, \eqref{f2-eq} and \eqref{f2-id} in the following function space
\begin{equation*}
\begin{split}
\FY^\al_{N,T}=&\Big\{(\CG_1,\CG_2)
\bigg|\sup\limits_{0\leq t\leq T}\sum\limits_{|\ze|+|\vth|\leq N}\left\{\|w_{l}\pa_\zeta^\vartheta\CG_1(t)\|_{L^\infty}
+\al\|w_{l}\pa_\zeta^\vartheta\CG_2(t)\|_{L^\infty}\right\}<+\infty
\Big\},
\end{split}
\end{equation*}
associated with the norm
$$
\|[\CG_1,\CG_2]\|_{\FY^\al_{N,T}}=\sup\limits_{0\leq t\leq T}\sum\limits_{|\ze|+|\vth|\leq N}
\left\{\|w_{l}\pa_\zeta^\vartheta\CG_1(t)\|_{L^\infty}+\al\|w_{l}\pa_\zeta^\vartheta\CG_2(t)\|_{L^\infty}\right\}.
$$
We then have the following result on local-in-time existence. For brevity, we omit its proof, cf.~\cite{DL-2020}.

\begin{theorem}[Local existence]\label{loc.th}
Under the conditions stated in Theorem \ref{ge.th}, there exits  $T_\ast>0$ which may depend on $\al$ such that the coupling problem \eqref{f1-eq}, \eqref{f1-id}, \eqref{f2-eq} and \eqref{f2-id} admits a unique local in time solution $[f_1(t,x,v),f_2(t,x,v)]$ satisfying
\begin{align*}
\|[f_1,f_2]\|_{\FY^\al_{N,T_\ast}}\leq C_0\al^2,
\end{align*}
for a constant $C_0>0$ independent of $\al$.
\end{theorem}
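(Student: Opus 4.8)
The plan is to realize $[f_1,f_2]$ as the limit of a linearized iteration combined with Duhamel's principle, closing all estimates in $\FY^\al_{N,T_\ast}$ by the time-dependent analogue of the $L^\infty$--$L^2$ argument behind Lemma \ref{lifpri}. Set $[f_1^0,f_2^0]=[f_0,0]$ and, given $[f_1^n,f_2^n]\in\FY^\al_{N,T}$, define $[f_1^{n+1},f_2^{n+1}]$ by keeping on the left-hand sides the transport and force terms, the damping $\nu$, the operator $L$ acting on $f_2^{n+1}$, and the fourth-order moment correction $\al Av\cdot(\na_v\sqrt{\mu})(|v|^2-3)\sqrt{\mu}c_{f_2^{n+1}}$, while evaluating at the $n$-th iterate the cutoff terms $\chi_M\CK f_1$, $(1-\chi_M)\mu^{-\frac{1}{2}}\CK f_1$, all quadratic terms $Q(f_1,f_1),Q(f_1,\sqrt{\mu}f_2),Q(\sqrt{\mu}f_2,f_1),\Ga(f_2,f_2)$, and the $G_1,G_R$--coupling terms. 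The first equation is then a linear transport equation with the strictly positive damping coefficient $\nu(v)$ plus lower-order terms of size $O(\al)$, and is solved by Duhamel's formula along the characteristics $\frac{dV}{ds}=-(\beta I+\al A)V$, $\frac{dX}{ds}=V$ through $(t,x,v)$, i.e. $V(s)=e^{-(s-t)(\beta I+\al A)}v$ and $X(s)=x-(\beta I+\al A)^{-1}[e^{-(s-t)(\beta I+\al A)}-I]v$ (cf. \eqref{H1CL}); since $x\in\T^3$ there is no spatial boundary, so the backward flow is defined on all of $[0,t]$. The second equation is the linearized Boltzmann equation with the invertible force matrix $\beta I+\al A$, solved by the usual linear theory for $L$ together with its own mild representation for the $L^\infty$ bound.

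For the weighted $L^\infty$ bounds one applies $\pa_\ze^\vth$, writes the transport equations satisfied by $w_l\pa_\ze^\vth f_i^{n+1}$ in the spirit of \eqref{H10}--\eqref{H20}, and integrates along characteristics from $t=0$; the commutators produced by $\pa_v$ hitting $\nu$, $Av$ and the cutoffs are of strictly lower order or carry a factor $\al$. The only delicate inhomogeneity in the $f_1$--equation is $\chi_M\CK f_1^n$: for $|v|\geq M$ it is small by Lemma \ref{CK} in the Maxwell case and Lemma \ref{g-ck-lem} in the hard potential case, while for $|v|\leq M$ it is merely bounded by $C(M)\sum_{|\ze|+|\vth|\leq N}\|w_l\pa_\ze^\vth f_1^n\|_{L^\infty}$, which after integration over the short interval $[0,t]\subset[0,T_\ast]$ contributes only $C(M)T_\ast\|[f_1^n,f_2^n]\|_{\FY^\al_{N,T_\ast}}$ and is absorbed once $T_\ast$ is small. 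The source $\mu^{-\frac{1}{2}}(1-\chi_M)\CK f_1^n$ of the $f_2$--equation is supported in $|v|\leq M+1$ and, by the kernel bounds \eqref{fk-12}, is controlled with any polynomial weight by $C(M)\sum\|w_l\pa_\ze^\vth f_1^n\|_{L^\infty}$; the quadratic terms are handled by Lemmas \ref{op.es.lem} and \ref{Ga}, and the moment correction is smooth, Gaussian-decaying and $O(\al)$, hence harmless. The $L^\infty$ bound for $f_2^{n+1}$ is closed by the $L^2$--$L^\infty$ interplay: at moderate velocities $|v|\leq M$ one converts it into the $L^2$ bound for $\pa_\ze^\vth f_2^{n+1}$, which in turn follows from the energy estimate obtained by testing the $\pa_\ze^\vth$-differentiated $f_2^{n+1}$--equation against $\pa_\ze^\vth f_2^{n+1}$, using Lemma \ref{es-L}, Cauchy--Schwarz, $f_2^{n+1}(0,\cdot,\cdot)=0$ and Gronwall's inequality over $[0,T_\ast]$; crucially, no macroscopic dissipation is needed here, so the conservation laws \eqref{id.cons} enter only later, for the global decay estimate.

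Collecting these bounds, for $T_\ast$ sufficiently small (possibly depending on $\al$) one arrives at
\begin{align*}
\|[f_1^{n+1},f_2^{n+1}]\|_{\FY^\al_{N,T_\ast}}
&\leq C_1\sum_{|\ze|+|\vth|\leq N}\|w_l\pa_\ze^\vth f_0\|_{L^\infty}\\
&\quad+C_2\big(T_\ast+\al+\|[f_1^n,f_2^n]\|_{\FY^\al_{N,T_\ast}}\big)\|[f_1^n,f_2^n]\|_{\FY^\al_{N,T_\ast}}.
\end{align*}
Since the first term on the right is $\leq C_1\al^2$ by \eqref{th.ids.p}, an induction on $n$ yields $\|[f_1^n,f_2^n]\|_{\FY^\al_{N,T_\ast}}\leq C_0\al^2$ uniformly in $n$ for $C_0$ large enough and $T_\ast,\al$ small. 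Subtracting the equations for two consecutive iterates and repeating the same estimates gives $\|[f_1^{n+1},f_2^{n+1}]-[f_1^n,f_2^n]\|_{\FY^\al_{N,T_\ast}}\leq\theta\,\|[f_1^n,f_2^n]-[f_1^{n-1},f_2^{n-1}]\|_{\FY^\al_{N,T_\ast}}$ with $\theta<1$ once $l,M$ are large and $T_\ast,\al$ are small, so $[f_1^n,f_2^n]$ converges in $\FY^\al_{N,T_\ast}$ to a solution of \eqref{f1-eq}, \eqref{f1-id}, \eqref{f2-eq}, \eqref{f2-id}; uniqueness follows from the same difference estimate, and the reconstruction $\sqrt{\mu}f=f_1+\sqrt{\mu}f_2$, $G=G_{st}+\sqrt{\mu}f$, produces the desired local-in-time solution.

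\textbf{Main obstacle.} The essential difficulty is the coupling between $f_1$, controlled only in weighted $L^\infty$, and $f_2$, whose control rests on the $L^2$ dissipation of $L$: one cannot estimate them separately, since $\chi_M\CK f_1$ is not small in $L^2$ while the source of the $f_2$--equation contains $\CK f_1$. The remedy is to propagate the $L^\infty$ bound for $f_1$, the $L^\infty$ bound for $f_2$ and the $L^2$ bound for $f_2$ simultaneously, using the largeness of $l$ and $M$ (for the smallness of $\CK$ at high velocities via Lemmas \ref{CK} and \ref{g-ck-lem}, where the $|v|\ge M$ contribution carries no $T_\ast$ factor), the shortness of $T_\ast$, and the $L^2$--$L^\infty$ interplay at moderate velocities. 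A secondary, purely bookkeeping, point is to check that each linear iterate is well posed in $\FY^\al_{N,T}$; here the full operators $L$ and $\CL$ appear (no $\sigma$--truncation), so there is none of the loss of conservation laws that complicates the steady construction around \eqref{pals}.
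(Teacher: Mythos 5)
The paper does not actually write out this proof: it states only that the local existence ``will be established by an iteration method and Duhamel's principle'' and defers the details to \cite{DL-2020}. Your proposal follows exactly that announced strategy --- a linearized iteration solved by Duhamel's formula along the characteristics \eqref{CH2}, weighted $L^\infty$ bounds using the smallness of $\chi_M\CK$ at large velocities (Lemmas \ref{CK}, \ref{g-ck-lem}), an $L^2$ energy estimate for $f_2$ closed by Gronwall from zero data, and a contraction for small $T_\ast$ --- and it is essentially sound.

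One step as written does not quite close, namely the displayed recursion in the norm $\FY^\al_{N,T_\ast}$. The $f_1$-equation contains the linear coupling terms $-\tfrac{\beta}{2}|v|^2\sqrt{\mu}f_2-\tfrac{\al}{2}v\cdot(Av)\sqrt{\mu}f_2+\al Av\cdot(\na_v\sqrt{\mu})(|v|^2-3)\sqrt{\mu}c_{f_2}$, whose weighted $L^\infty$ size is $C\al\|w_lf_2\|_{L^\infty}$. Since $\FY^\al_{N,T}$ already carries the factor $\al$ on the $f_2$-component, this equals $C\cdot\bigl(\al\|w_lf_2\|_{L^\infty}\bigr)$, i.e.\ a \emph{non-small} multiple of $\|[f_1,f_2]\|_{\FY^\al_{N,T_\ast}}$; if these terms are frozen at the $n$-th iterate, the coefficient of $\|[f_1^n,f_2^n]\|$ in your recursion is $C$ rather than $C_2(T_\ast+\al+\|[f^n]\|)$, and the induction $\|[f^n]\|\leq C_0\al^2$ does not propagate. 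The fix is standard and implicit in your ``propagate the three bounds simultaneously'' remark, but it must be made explicit: either order the iteration so that $f_2^{n+1}$ is solved first (its equation sees $f_1$ only through $(1-\chi_M)\mu^{-1/2}\CK f_1^n$) and insert $f_2^{n+1}$ into the coupling terms of the $f_1^{n+1}$-equation, so the coupling contributes $C\al\|w_lf_2^{n+1}\|_{L^\infty}\leq C\al\bigl(\|[f^n]\|+\cdots\bigr)$; or propagate the two components with their true sizes, $\|w_lf_1^n\|_{L^\infty}\leq C_0\al^2$ \emph{and} $\|w_lf_2^n\|_{L^\infty}\leq C_0\al^2$ (the latter from the $\CK f_1^n$-source bound together with the Gronwall $L^2$ estimate with $f_2(0)=0$), so that the coupling is in fact $O(\al^3)$. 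This is precisely the composition of the two one-sided inequalities that the paper carries out in \eqref{g1-g2-z} and \eqref{g2-mic-lif-p2} for the global estimate. A final simplification worth noting: for the purely local statement the $L^2$--$L^\infty$ interplay for $Kf_2^{n+1}$ is optional, since $K$ is bounded on $w_lL^\infty$ by Lemma \ref{Kop} and $\int_0^te^{-c(t-s)}\,ds\leq T_\ast$, so its Duhamel contribution is $CT_\ast\sup_s\|w_lf_2^{n+1}(s)\|_{L^\infty}$ and absorbs for $T_\ast$ small.
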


In what follows we focus on deducing the {\it a priori} $W^{N,\infty}$ estimates on the solution constructed in Theorem \ref{loc.th}. Namely, we assume that $[f_1,f_2]$ is a classical solution to the initial value problem \eqref{f1-eq}, \eqref{f1-id}, \eqref{f2-eq} and \eqref{f2-id}. The purpose is to prove
\begin{align}\label{ap-es}
\sup\limits_{0\leq s\leq t}\sum\limits_{|\ze|+|\vth|\leq N}e^{\la_0s}\|w_l\pa_\ze^{\vth}f_1(s)\|_{L^\infty}
&+\al\sup\limits_{0\leq s\leq t}\sum\limits_{|\ze|+|\vth|\leq N}e^{\la_0s}\|w_l\pa_\ze^{\vth}f_2(s)\|_{L^\infty}\notag\\
\leq& C\sum\limits_{|\ze|+|\vth|\leq N}\|w_l\pa_\ze^{\vth}f_0\|_{L^\infty},
\end{align}
for any $t\geq0$ and some constant $C>0$, under the {\it a priori} assumption that
\begin{align}\label{ap-as}
\sup\limits_{0\leq s\leq t}&\sum\limits_{|\ze|+|\vth|\leq N}e^{\la_0s}\|w_l\pa_\ze^{\vth}f_1(s)\|_{L^\infty}
+\al\sup\limits_{0\leq s\leq t}\sum\limits_{|\ze|+|\vth|\leq N}e^{\la_0s}\|w_l\pa_\ze^{\vth}f_2(s)\|_{L^\infty}\leq \al^2,
\end{align}
where $\la_0>0$ is independent of $\al$ to be determined later. Note that the initial condition \eqref{th.ids.p} is the consequence of \eqref{ap-as}.  The  {\it a priori} estimate together with the local existence established in Theorem \ref{loc.th}
and the continuum argument
enables us to construct the global existence for the Cauchy problem \eqref{f-eq} and \eqref{f-id}. Thus we are ready to complete the

\begin{proof}[Proof of Theorem \ref{ge.th}]

We first verify that \eqref{ap-as} holds true under the {\it a priori} assumption \eqref{ap-es}.
The proof is divided into two steps.

\noindent\underline{{\it Step 1. $W^{k,\infty}$ estimates.}}
Denoting
$$[g_{1},g_{2}](t)=e^{\la_0t}[f_1,f_2](t),$$
and defining
$$
\FP_0g_2=\{a_2(t,x)+\Fb_2(t,x)\cdot v+c_2(t,x)(|v|^2-3)\}\sqrt{\mu},
$$
one has by \eqref{f1-eq}, \eqref{f1-id}, \eqref{f2-eq} and \eqref{f2-id} that
\begin{align}\label{g1k.eq}
\pa_t [w_l\pa_\ze^{\vth}g_{1}]&+v\cdot\na_x[w_l\pa_\ze^{\vth}g_{1}]-\beta v\cdot\na_v [w_l\pa_\ze^{\vth}g_{1}]+2l\beta\frac{|v|^2}{1+|v|^2}w_l\pa_\ze^{\vth}g_{1}-3\beta w_l\pa_\ze^{\vth}g_{1}
\notag\\&-\al Av\cdot\na_v[w_l\pa_\ze^{\vth}g_{1}]+2l\al\frac{v\cdot Av}{1+|v|^2}w_l\pa_\ze^{\vth}g_{1}
-\al\textrm{tr}A w_l\pa_\ze^{\vth}g_{1}-\la_0w_l\pa_\ze^{\vth}g_{1}+\nu w_l\pa_\ze^{\vth}g_{1}
\notag\\=&-{\bf 1}_{|\ze|>0}\sum\limits_{|\ze'|=1}C_\ze^{\ze'}w_l\pa_{\ze'}v\cdot\na_x\pa^\vth_{\ze-\ze'}g_{1}
+\beta {\bf 1}_{|\ze|>0}\sum\limits_{|\ze'|=1}C_\ze^{\ze'}w_l\pa_{\ze'}v\cdot\na_v\pa^\vth_{\ze-\ze'}g_{1}
\notag\\&+\al{\bf 1}_{|\ze|>0}\sum\limits_{|\ze'|=1}C_\ze^{\ze'}w_l\pa_{\ze'}(Av)\cdot\na_v\pa^\vth_{\ze-\ze'}g_{1}
\notag\\&-{\bf 1}_{|\ze|>0}w_l\sum\limits_{0< \ze'\leq \ze}C_{\ze}^{\ze'}w_l\pa_{\ze-\ze'}\nu\pa^{\vth}_{\ze-\ze'}g_{1}
+w_l\pa_\ze^\vth(\chi_M\CK g_{1,0})-\frac{\beta}{2}w_l\pa_\ze^\vth(|v|^2\sqrt{\mu}g_{2})\notag\\&
-\frac{\al}{2} w_l\pa_\ze^\vth(v\cdot(Av)\sqrt{\mu}g_{2})
+{\al w_l\pa_\ze^\vth\left( Av\cdot(\na_v\sqrt{\mu})(|v|^2-3)\sqrt{\mu}c_{2}\right)}
\notag\\&+e^{\la_0t}w_l\pa_\ze^\vth \{Q(f_1,f_1)+Q(f_1,\sqrt{\mu}f_2)+Q(\sqrt{\mu}f_2,f_1)\}\notag\\&+\al w_le^{\la_0t}\pa_\ze^\vth\{Q(\sqrt{\mu}(G_1+\al G_R),\sqrt{\mu}f)
+Q(\sqrt{\mu}f,\sqrt{\mu}(G_1+\al G_R))\},
\end{align}
\begin{align}
\pa_\ze^\vth g_{1}(0,x,v)=\pa_\ze^\vth f_0(x,v),\notag
\end{align}
\begin{align}\label{g2k.eq}
\pa_t [w_l\pa_\ze^{\vth}g_{2}]&+v\cdot\na_x[w_l\pa_\ze^{\vth}g_{2}]-\beta v\cdot\na_v[w_l\pa_\ze^{\vth}g_{2}]-2l\beta\frac{|v|^2}{1+|v|^2}w_l\pa_\ze^{\vth}g_{2}-3\beta w_l\pa_\ze^{\vth}g_{2}
\notag\\&-\al Av\cdot\na_v[w_l\pa_\ze^{\vth}g_{2}]+2l\al\frac{v\cdot Av}{1+|v|^2}w_l\pa_\ze^{\vth}g_{2}
-\al\textrm{tr}Aw_l\pa_\ze^{\vth}g_{2}-\la_0w_l\pa_\ze^{\vth}g_{2}+\nu w_l\pa_\ze^{\vth}g_{2}\notag\\
=&-{\bf 1}_{|\ze|>0}\sum\limits_{|\ze'|=1}C_\ze^{\ze'}w_l\pa_{\ze'}v\cdot\na_x\pa^\vth_{\ze-\ze'}g_{2}
+\beta {\bf 1}_{|\ze|>0}\sum\limits_{|\ze'|=1}C_\ze^{\ze'}w_l\pa_{\ze'}v\cdot\na_v\pa_{\ze-\ze'}^{\vth}g_{2}
\notag\\&+\al {\bf 1}_{\ze>0}\sum\limits_{|\ze'|=1}C_\ze^{\ze'}w_l\pa_{\ze'}(Av)\cdot\na_v\pa_{\ze-\ze'}^{\vth}g_{2}
-{\al w_l\pa_\ze^\vth\left( Av\cdot(\na_v\sqrt{\mu})(|v|^2-3)c_{2}\right)}
\notag\\&-{\bf 1}_{|\ze|>0}\sum\limits_{0<\ze'\leq \ze}C_{\ze}^{\ze'}w_l\pa_{\ze'}\nu\pa_{\ze-\ze'}^{\vth}g_{2}
+w_l\pa_\ze^\vth(K g_{2,0})+w_l\pa_\ze^\vth((1-\chi_M)\mu^{-\frac{1}{2}}\CK g_{1})\notag\\&+e^{\la_0t}w_l\pa_\ze^\vth \Ga(f_2,f_2),
\end{align}
and
\begin{align}
\pa_\ze^\vth g_{2}(0,x,v)=0.\notag
\end{align}
As in \eqref{H1CL}, we recall that the characteristic line of the above system can be determined by
\begin{align}\label{CH2}
\left\{\begin{array}{rll}
&\frac{d X}{ds}=V(s;t,x,v),\\[2mm]
&\frac{d V}{ds}=-\beta V(s;t,x,v)-\al A V(s;t,x,v),\\[2mm]
&X(t;t,x,v)=x,\ V(t;t,x,v)=v,
\end{array}\right.
\end{align}
which gives
\begin{align}\label{CH2-p2}
\left\{\begin{array}{rll}
&V(s)=V(s;t,x,v)=e^{-(s-t)(\beta I+\al A)}v,\\[2mm]
&X(s)=X(s;t,x,v)=x-(\beta I+\al A)^{-1}\left[e^{-(s-t)(\beta I+\al A)}-I\right]v.
\end{array}\right.
\end{align}
Along the characteristic line \eqref{CH2}, we write the solution of \eqref{g1k.eq} and \eqref{g2k.eq}
as the following mild form
\begin{align}
w_l\pa_\ze^{\vth}g_{1}(t,x,v)=\sum\limits_{i=1}^9\CH_{i},\label{g1k}
\end{align}
with
\begin{align}
\CH_1=e^{-\int_0^t\CA^{\la}(s)ds}w_l\pa_\ze^\vth f_0(X(0),V(0)),\notag
\end{align}
\begin{align}
\CH_2=-{\bf 1}_{|\ze|>0}\sum\limits_{|\ze'|=1}C_\ze^{\ze'}\int_0^te^{-\int_s^t\CA^\la(\tau)d\tau}
\{w_l\pa_{\ze'}v\cdot\na_x\pa^\vth_{\ze-\ze'}g_{1}\}(s,X(s),V(s))ds,\notag
\end{align}
\begin{align}
\CH_3=\beta {\bf 1}_{|\ze|>0}\sum\limits_{|\ze'|=1}C_\ze^{\ze'}\int_0^te^{-\int_s^t\CA^\la(\tau)d\tau}
\{w_l\pa_{\ze'}v\cdot\na_v\pa^\vth_{\ze-\ze'}g_{1}\}(s,X(s),V(s))ds,\notag
\end{align}
\begin{align}
\CH_4=\al{\bf 1}_{|\ze|>0}\sum\limits_{|\ze'|=1}C_\ze^{\ze'}\int_0^te^{-\int_s^t\CA^\la(\tau)d\tau}
\{w_l\pa_{\ze'}(Av)\cdot\na_v\pa_{\ze-\ze'}^\vth g_{1}\}(s,X(s),V(s))ds,\notag
\end{align}
\begin{align}
\CH_5=-{\bf 1}_{|\ze|>0}\sum\limits_{0<\ze'\leq \ze}C_\ze^{\ze'}
\int_0^te^{-\int_s^t\CA^\la(\tau)d\tau}\{w_l\pa_{\ze'}\nu\pa_{\ze-\ze'}^{\vth}g_{1}\}(s,X(s),V(s))ds
,\notag
\end{align}
\begin{align}
\CH_6=\int_0^te^{-\int_s^t\CA^\la(\tau)d\tau}\{w_l\pa_{\ze}^\vth(\chi_M\CK g_{1})\}(s,X(s),V(s))ds
,\notag
\end{align}
\begin{align}
\CH_7=-\int_0^t&e^{\int_s^t\CA^\la(\tau)d\tau}\Big\{\frac{\beta}{2}w_l\pa_{\ze}^\vth(|v|^2\sqrt{\mu}g_{2})
+\frac{\al}{2} w_l\pa_{\ze}^\vth(v\cdot(Av)\sqrt{\mu}g_{2})\notag\\&\qquad-{\al w_l\pa_\ze^\vth\left( Av\cdot(\na_v\sqrt{\mu})(|v|^2-3)\sqrt{\mu}c_{2}\right)}\Big\}(s,X(s),V(s))ds,\notag
\end{align}
\begin{align}
\CH_8=\int_0^te^{-\int_s^t\CA^\la(\tau)d\tau}e^{\la_0s}\{w_l\pa_\ze^\vth \{Q(f_1,f_1)+Q(f_1,\sqrt{\mu}f_2)+Q(\sqrt{\mu}f_2,f_1)\}\}(s,X(s),V(s))ds,\notag
\end{align}
\begin{align}
\CH_9=&\al \int_0^te^{-\int_s^t\CA^\la(\tau)d\tau}e^{\la_0s}\Big\{w_l\pa_{\ze}^\vth\{Q(\sqrt{\mu}(G_1+\al G_R),\sqrt{\mu}f)
\notag\\&\qquad\qquad+Q(\sqrt{\mu}f,\sqrt{\mu}(G_1+\al G_R))\}\Big\}(s,X(s),V(s))ds,\notag
\end{align}
and
\begin{align}
w_l\pa_\ze^\vth g_{2}(t,x,v)
=\sum\limits_{i=10}^{16}\CH_{i},\label{g2k}
\end{align}
with
\begin{align}
\CH_{10}=-{\bf 1}_{|\ze|>0}\sum\limits_{|\ze'|=1}C_\ze^{\ze'}
\int_0^te^{-\int_s^t\CA^\la(\tau)d\tau}\{w_l\pa_{\ze'}v\cdot\na_x\pa^\vth_{\ze-\ze'}g_2\}(s,X(s),V(s))ds,\notag
\end{align}
\begin{align}
\CH_{11}=&{\bf 1}_{|\ze|>0}\sum\limits_{|\ze'|=1}C_\ze^{\ze'}
\int_0^te^{-\int_s^t\CA^\la(\tau)d\tau}\Big\{\beta w_l\pa_{\ze'}v\cdot\na_v\pa_{\ze-\ze'}^{\vth}g_2
\notag\\&\qquad+\al w_l\pa_{\ze'}(Av)\cdot\na_v\pa_{\ze-\ze'}^{\vth}g_2
-{\al w_l\pa_\ze^\vth\left( Av\cdot(\na_v\sqrt{\mu})(|v|^2-3)c_{2}\right)}\Big\}(s,X(s),V(s))ds,\notag
\end{align}
\begin{align}
\CH_{12}=-{\bf 1}_{|\ze|>0}\sum\limits_{0< \zeta_1\leq \zeta}C_\ze^{\ze'}
\int_0^te^{-\int_s^t\CA^\la(\tau)d\tau}\{w_l\pa_{\ze'}\nu\pa^\vth_{\zeta-\zeta_1}g_2\}(s,X(s),V(s))ds,\notag
\end{align}
\begin{align}
\CH_{13}=\int_0^te^{-\int_s^t\CA^\la(\tau)d\tau}
\{w_lK \pa_\ze^{\vth}g_{2}\}(s,X(s),V(s))ds,\notag
\end{align}
\begin{align}
\CH_{14}={\bf 1}_{|\ze|>0}\sum\limits_{0<\ze'\leq \ze}C_\ze^{\ze'}\int_0^te^{-\int_s^t\CA^\la(\tau)d\tau}
\{w_l(\pa_{\ze'}K)(\pa_{\ze-\ze'}^\vth g_2)\}(s,X(s),V(s))ds,\notag
\end{align}
\begin{align}
\CH_{15}=\int_0^te^{-\int_s^t\CA^\la(\tau)d\tau}
\{w_l\pa_{\ze}^\vth((1-\chi_M)\mu^{-\frac{1}{2}}\CK g_1)\}(s,X(s),V(s))ds,\notag
\end{align}
\begin{align}
\CH_{16}=\int_0^te^{-\int_s^t\CA^\la(\tau)d\tau}e^{\la_0s}
\{w_l\pa_\ze^\vth \Ga(f_2,f_2)\}(s,X(s),V(s))ds.\notag
\end{align}
Here, as before we have denoted
\begin{align}
\CA^\la(\tau,V(\tau))=&\nu(V(\tau))-3\beta+2l \beta  \frac{|V(\tau)|^2}{1+|V(\tau)|^2} +2l \al \frac{V(\tau)\cdot (AV(\tau))}{{1+|V(\tau)|^2}}-\al{\rm tr}A-\la_0\notag\\
=&\nu(V(\tau))-3\beta_1+2l \beta  \frac{|V(\tau)|^2}{1+|V(\tau)|^2} +2l \al \frac{V(\tau)\cdot (AV(\tau))}{{1+|V(\tau)|^2}}-\la_0,\notag
\end{align}
and moreover, as long as $l\al>0$, $|l\beta|$ and $\la_0$ are suitably small, one sees that
$\CA^\la(\tau,V(\tau))\geq \frac{1}{2}\nu(V(\tau))>\tilde{C}_0$ for some $\tilde{C}_0>0$, for which we also have
\begin{align}
\int_0^te^{-\int_s^t\nu(V(\tau))d\tau}\nu(V(s))ds<\infty.\label{Ala-ubd}
\end{align}
We now turn to estimate $\CH_i$ $(1\leq i\leq 16)$ individually.
We still start with the nonlocal terms $\CH_6$, $\CH_8$, $\CH_9$, $\CH_{13}$, $\CH_{14}$, $\CH_{15}$ and $\CH_{16}$, which turn out to be more intricate and be different from the corresponding estimates in the proof of Theorem \ref{loc.th}, because the estimates we want to obtain here must be uniform in time $t\in(0,\infty).$

For $\CH_6$, if $\ga=0$, one gets from Lemma \ref{CK} that
\begin{align}
|\CH_6|\leq& \frac{C}{M}\int_0^te^{-\int_s^t\frac{\nu(V(\tau))}{2}d\tau}ds
\sup\limits_{0\leq s\leq t} \sum\limits_{\ze'\leq \ze} \|w_l\pa_{\ze'}^{\vth}g_{1}(s)\|_{L^\infty}
\leq \frac{C}{M}
\sup\limits_{0\leq s\leq t} \sum\limits_{\ze'\leq \ze} \|w_l\pa_{\ze'}^{\vth}g_{1}(s)\|_{L^\infty}.\notag
\end{align}
If $0<\ga\leq1$, by \eqref{Ala-ubd} and using \eqref{CK2} in Lemma \ref{g-ck-lem}, we have
\begin{align}
|\CH_6|\leq& C\int_0^te^{-\int_s^t\frac{\nu(V(\tau))}{2}d\tau}\nu(V(s))ds
\sup\limits_{0\leq s\leq t} \left\|\left\{\nu^{-1}w_l\pa_\ze^\vth(\chi_M\CK g_{1})\right\}(s,X(s),V(s))\right\|_{L^\infty}
\notag\\
 \leq&\left({\bf 1}_{\ga>0}\frac{C}{M^{\ga/2}}+\varsigma\right)\sup\limits_{0\leq s\leq t}\sum\limits_{\ze'\leq \ze} \|w_l\pa_{\ze'}^{\vth}g_{1}(s)\|_{L^\infty}.\notag
\end{align}
Next, thanks to Lemma \ref{op.es.lem} and the {\it a priori} assumption \eqref{ap-es} as well as \eqref{Ala-ubd}, it follows
\begin{align}
|\CH_8|\leq& C\int_0^te^{-\int_s^t\frac{\nu(V(\tau))}{2}d\tau}\nu(V(s))ds
\notag\\&\times\sup\limits_{0\leq s\leq t}\left\|e^{\la_0s}\left\{\nu^{-1}w_l\pa_\ze^\vth \{Q(f_1,f_1)+Q(f_1,\sqrt{\mu}f_2)+Q(\sqrt{\mu}f_2,f_1)\}\right\}(s,X(s),V(s))\right\|_{L^\infty}
\notag\\
 \leq&C\sup\limits_{0\leq s\leq t}\sum\limits_{\ze'+\ze''\leq \ze\atop{\vth'\leq\vth}}
 \left\{\|w_l\pa_{\ze'}^{\vth'}g_1(s)\|_{L^\infty}\|w_l\pa_{\ze''}^{\vth-\vth'}g_1(s)\|_{L^\infty}
+\|w_l\pa_{\ze'}^{\vth'}g_1(s)\|_{L^\infty}\|w_l\pa_{\ze''}^{\vth-\vth'}g_2(s)\|_{L^\infty}\right\}\notag\\
 \leq& C\al^2\sup\limits_{0\leq s\leq t}\sum\limits_{\ze'\leq \ze, \vth'\leq\vth}
 \{\|w_l\pa_{\ze'}^{\vth'}g_1(s)\|_{L^\infty}+\|w_l\pa_{\ze'}^{\vth'}g_2(s)\|_{L^\infty}\},\notag
\end{align}
and similarly, in view of \eqref{Ala-ubd} and Theorem \ref{st.sol} and by Lemma \ref{op.es.lem}, one has
\begin{align}
|\CH_9|\leq& C\al\sup\limits_{0\leq s\leq t}
\left\|e^{\la_0s}\left\{\nu^{-1}w_l\pa_\ze^\vth Q(\sqrt{\mu}f,\sqrt{\mu}(G_1+\al G_R))\right\}(s,X(s),V(s))\right\|_{L^\infty}
\notag\\&+C\al\sup\limits_{0\leq s\leq t}\left\|e^{\la_0s}\left\{\nu^{-1}w_l\pa_\ze^\vth Q(\sqrt{\mu}(G_1+\al G_R)),\sqrt{\mu}f)\right\}(s,X(s),V(s))\right\|_{L^\infty}
\notag\\
 \leq&C\al\sup\limits_{0\leq s\leq t}\sum\limits_{\ze'\leq \ze} \|w_l\pa_{\ze'}^\vth[g_{1},g_{2}](s)\|_{L^\infty},\notag
\end{align}
\begin{align}
|\CH_{14}|\leq& {\bf 1}_{\ze>0}C\sup\limits_{0\leq s\leq t}
\big\|e^{\la_0s}\big\{\nu^{-1}w_l\{\pa_\ze^\vth [Q(\sqrt{\mu}f_2,\mu)+Q_{\textrm{gain}}(\mu,\sqrt{\mu}f_2)]
\notag\\&\qquad\qquad-[Q(\sqrt{\mu}\pa_\ze^\vth f_2,\mu)+Q_{\textrm{gain}}(\mu,\sqrt{\mu}\pa_\ze^\vth f_2)]\}
\big\}(s,V(s))\big\|_{L^\infty}
\notag\\
 \leq&{\bf 1}_{\ze>0}C\sup\limits_{0\leq s\leq t}\sum\limits_{\ze'< \ze} \|w_l\pa_{\ze'}^\vth g_{2}(s)\|_{L^\infty},\notag
\end{align}
\begin{align}
|\CH_{15}|\leq& C\sup\limits_{0\leq s\leq t} \left\|e^{\la_0s}\left\{\nu^{-1}w_l\pa_\ze^\vth \{(1-\chi_M)\mu^{-\frac{1}{2}} [Q(f_1,\mu)+Q_{\textrm{gain}}(\mu,f_1)]\}\right\}(s,V(s))\right\|_{L^\infty}
\notag\\
 \leq&C\sup\limits_{0\leq s\leq t}\sum\limits_{\ze'\leq \ze} \|w_l\pa_{\ze'}^\vth g_1(s)\|_{L^\infty}.\notag
\end{align}
For $\CH_{16}$, in light of Lemma \ref{Ga} and the {\it a priori} assumption \eqref{ap-as}, it follows
\begin{align}
|\CH_{16}|\leq& C\sup\limits_{0\leq s\leq t}\sum\limits_{\ze'+\ze''\leq \ze,\vth'\leq\vth} \|w_l\pa_{\ze'}^{\vth'}g_2(s)\|_{L^\infty}\|w_l\pa_{\ze''}^{\vth-\vth'}g_2(s)\|_{L^\infty}\notag\\
 \leq& C\al\sup\limits_{0\leq s\leq t}\sum\limits_{\ze'\leq \ze,\vth'\leq\vth} \|w_l\pa_{\ze'}^{\vth'}g_2(s)\|_{L^\infty}.\notag
\end{align}
For the delicate nonlocal term $\CH_{13}$, we first rewrite
\begin{align}\label{CH11-exp}
\CH_{13}=\int_0^te^{-\int_s^t\CA^\la(\tau)d\tau}\int_{\R^3}{\bf k}_w(V(s),v_\ast)(w_l\pa_\ze^{\vth}g_{2})(s,X(s),v_\ast)dv_\ast ds.
\end{align}
As in Section \ref{st-pro}, the computation for $\CH_{13}$ is then divided into the following three cases.

\noindent\underline{{\it Case 1. $|V(s)|>M$.}} In this case, we get from Lemma \ref{Kop} that
\begin{align}
|\CH_{13}|\leq\frac{C}{M}\sup\limits_{0\leq s\leq t}\|w_l\pa_\ze^{\vth}g_{2}(s)\|_{L^\infty}.\notag
\end{align}
\underline{{\it Case 2. $|V(s)|\leq M$ and $|v_\ast|>2M$.}} At this stage, one has $|V(s)-v_\ast|>M$, thus it follows
\begin{equation*}
\mathbf{k}_w(V,v_\ast)
\leq Ce^{-\frac{\vps M^2}{8}}\mathbf{k}_w(V,v_\ast)e^{\frac{\vps |V-v_\ast|^2}{8}},
\end{equation*}
which gives
\begin{align}
|\CH_{13}|\leq Ce^{-\frac{\vps M^2}{8}}\sup\limits_{0\leq s\leq t}\|w_l\pa_\ze^{\vth}g_{2}(s)\|_{L^\infty},\notag
\end{align}
according to Lemma \ref{Kop}.

\noindent\underline{{\it Case 3. $|V(s)|\leq M$ and $|v_\ast|\leq2M$.}} The key point in this case is to make use of the boundedness of the operator $K$
on the complement of a singular set, so that \eqref{CH11-exp} can be controlled by the $L^1$ norm of $g_{2}$, which further
can be converted to the $L^2$ norm. To see this, for any large $M>0$, we choose a number $p(M)$ to introduce ${\bf k}_{w,p}(V,v_\ast)$ as \eqref{km}, and then write
\begin{align}
\CH_{13}=\int_0^te^{-\int_s^t\CA^\la(\tau)d\tau}\int_{\R^3}
[{\bf k}_w-{\bf k}_{w,p}+{\bf k}_{w,p}](V(s),v_\ast)(w_l\pa_\ze^{\vth}g_{2})(s,X(s),v_\ast)dv_\ast ds,\notag
\end{align}
which further gives the bound
\begin{align}\label{CH13-exp}
|\CH_{13}|\leq& \frac{C}{M}\sup\limits_{0\leq s\leq t}\|w_l\pa_\ze^{\vth}g_{2}(s)\|_{L^\infty}
\notag\\&+\underbrace{\int_0^te^{-\int_s^t\CA^\la(\tau)d\tau}{\bf 1}_{|V(s)|\leq M}\int_{|v_\ast|\leq 2M}\mathbf{k}_{w,p}(V(s),v_\ast)|(w_l\pa_\ze^{\vth}g_{2})(s,X(s),v_\ast)|dv_\ast ds}_{\RI}.
\end{align}
Putting the above estimate for $\CH_{13}$ together, we thus have
\begin{align}
|\CH_{13}|
\leq& C\left(e^{-\frac{\vps M^2}{8}}+\frac{1}{M}\right)\sup\limits_{0\leq s\leq t}\|w_l\pa_\ze^{\vth}g_{2}(s)\|_{L^\infty}+\RI.\notag
\end{align}
Up to now, one cannot deduce the desired estimate for $\RI$, which in fact will be handled by iteration argument once all the other terms in the right hand side of \eqref{g2k} have been properly controlled.

Let us now turn to compute the other terms in the right hand side of \eqref{g1k}
and \eqref{g2k}.
It is straightforward to see
\begin{align}
|\CH_1|\leq\|w_l\pa_\ze^\vth f_0\|_{L^\infty},\notag
\end{align}
\begin{align}
|\CH_2|,\ |\CH_5|\leq {\bf 1}_{\ze>0}C\sup\limits_{0\leq s\leq t}\sum\limits_{\ze'< \ze\atop{\ze'+\vth'=\ze+\vth}} \|w_l\pa_{\ze'}^{\vth'}g_{1}(s)\|_{L^\infty},\notag
\end{align}
and
\begin{align}
|\CH_{10}|,\ |\CH_{12}|\leq {\bf 1}_{\ze>0}C\sup\limits_{0\leq s\leq t}
\sum\limits_{\ze'< \ze\atop{\ze'+\vth'=\ze+\vth}} \|w_l\pa_{\ze'}^{\vth'} g_{2}(s)\|_{L^\infty}.\notag
\end{align}
From \eqref{bet-exp}, it follows $|\beta|\leq C\al$. We then have
\begin{align}
|\CH_3|+|\CH_4|+|\CH_{11}|\leq C\al\sup\limits_{0\leq s\leq t}\|w_l\pa_\ze^{\vth}[g_{1},g_{2}](s)\|_{L^\infty},\notag
\end{align}
and
\begin{align}
|\CH_7|\leq C\al\sup\limits_{0\leq s\leq t}\sum\limits_{\ze'\leq\ze}\|w_l\pa_{\ze'}^{\vth}g_{2}(s)\|_{L^\infty}.\notag
\end{align}
Consequently, by plugging all the above estimates for $\CH_i$ $(1\leq i\leq 16)$ into \eqref{g1k} and \eqref{g2k}, respectively,
one gets
\begin{align}\label{g1k-es1}
|w_l\pa_\ze^{\vth}g_{1}(t,x,v)|\leq&\|w_l\pa_{\ze}^{\vth}f_0\|_{L^\infty}
+{\bf 1}_{\ze>0}C\sup\limits_{0\leq s\leq t}\sum\limits_{\ze'< \ze\atop{\ze'+\vth'=\ze+\vth}} \|w_l\pa_{\ze'}^{\vth'}g_{1}(s)\|_{L^\infty}
\notag\\
&+C\left(\al+\frac{1}{M}+{\bf 1}_{\ga>0}\frac{1}{M^{\ga/2}}+\varsigma\right)
\sup\limits_{0\leq s\leq t}\sum\limits_{\ze'+\vth'\leq \ze+\vth} \|w_l\pa_{\ze'}^{\vth'}g_{1}(s)\|_{L^\infty}
\notag\\&+C\al
\sup\limits_{0\leq s\leq t}\sum\limits_{\ze'+\vth'\leq \ze+\vth} \|w_l\pa_{\ze'}^{\vth'}g_2(s)\|_{L^\infty},
\end{align}
and
\begin{align}\label{g2k-es1}
|w_l\pa_\ze^{\vth}g_{2}(t,x,v)|\leq&
C\sup\limits_{0\leq s\leq t}\sum\limits_{\ze'\leq \ze}\|w_l\pa_{\ze'}^{\vth}g_{1}(s)\|_{L^\infty}
+{\bf 1}_{\ze>0}C\sup\limits_{0\leq s\leq t}\sum\limits_{\ze'< \ze\atop{|\ze'|+|\vth'|=\ze+\vth}} \|w_l\pa_{\ze'}^{\vth'}g_{2}(s)\|_{L^\infty}\notag\\
&+C\left(\al+e^{-\frac{\vps M^2}{8}}+\frac{1}{M}\right)\sup\limits_{0\leq s\leq t}\sum\limits_{\ze'\leq \ze} \|w_l\pa_{\ze'}^{\vth}g_{2}(s)\|_{L^\infty}
+\RI.
\end{align}
To continue, we have by substituting \eqref{g2k-es1} into $\RI$ defined in \eqref{CH13-exp} that
\begin{align}\label{RI-p1}
\RI\leq& C\int_0^te^{-\int_s^t\CA^\la(\tau)d\tau}{\bf 1}_{|V(s)|\leq M}\int_{|v_\ast|\leq 2M}\mathbf{k}_{w,p}(V(s),v_\ast)\bigg\{\sup\limits_{0\leq \tau\leq s}\sum\limits_{\ze'\leq \ze}\|w_l\pa_{\ze'}^{\vth}g_{1}(\tau)\|_{L^\infty}
\notag\\&+{\bf 1}_{\ze>0}\sup\limits_{0\leq \tau\leq s}\sum\limits_{\ze'< \ze\atop{|\ze'|+|\vth'|=\ze+\vth}} \|w_l\pa_{\ze'}^{\vth'}g_{2}(\tau)\|_{L^\infty}\bigg\}dv_\ast ds\notag\\
&+\int_0^te^{-\int_s^t\CA^\la(\tau)d\tau}{\bf 1}_{|V(s)|\leq M}\int_{|v_\ast|\leq 2M}\mathbf{k}_{w,p}(V(s),v_\ast)
\int_0^se^{-\int_{s'}^s\CA^\la(\tau)d\tau}{\bf 1}_{|V(s')|\leq M}\notag\\
&\qquad\times\int_{|v'_\ast|\leq 2M}\mathbf{k}_{w,p}(V(s'),v'_\ast)
{\bf 1}_{X(s')\in \T^3}|(w_l\pa_\ze^{\vth}g_{2})(s',X(s'),v'_\ast)|dv'_\ast dv_\ast ds'ds,
\end{align}
where we have denoted
\begin{align*}
\left\{\begin{array}{rll}
&V(s')=V(s';s,X(s),v_\ast)= e^{-(s'-s)(\beta I+\al A)}v_\ast,\\[2mm]
&X(s')=X(s';s,X(s),v_\ast)=X(s)-(\beta I+\al A)^{-1}\left[e^{-(s'-s)(\beta I+\al A)}-I\right]v_\ast,
\end{array}\right.
\end{align*}
according to \eqref{CH2-p2}. As a consequence, \eqref{RI-p1} further implies
\begin{align}
\RI\leq C\sup\limits_{0\leq \tau\leq t}\sum\limits_{\ze'\leq \ze}\|w_l\pa_{\ze'}^{\vth}g_{1}(\tau)\|_{L^\infty}
+C{\bf 1}_{\ze>0}\sup\limits_{0\leq \tau\leq t}\sum\limits_{\ze'< \ze\atop{\ze'+\vth'=\ze+\vth}} \|w_l\pa_{\ze'}^{\vth'}g_{2}(\tau)\|_{L^\infty}+\RH,\notag
\end{align}
with $\RH$ denoting the second term in the right hand side of \eqref{RI-p1}. To compute $\RH$, we then split it into the following two integrals
\begin{align}
\RH=&\int_0^te^{-\int_s^t\CA^\la(\tau)d\tau}{\bf 1}_{|V(s)|\leq M}\int_{|v_\ast|\leq 2M}\mathbf{k}_{w,p}(V(s),v_\ast)
\left\{\int_0^{s-\eta_0}+\int_{s-\eta_0}^{s}\right\}e^{-\int_{s'}^s\CA^\la(\tau)d\tau}{\bf 1}_{|V(s')|\leq M}\notag\\
&\qquad\times\int_{|v'_\ast|\leq 2M}\mathbf{k}_{w,p}(V(s'),v'_\ast){\bf 1}_{X(s')\in \T^3}|(w_l\pa_\ze^{\vth}g_{2})(s',X(s'),v'_\ast)|
dv'_\ast dv_\ast ds'ds:=\RH_1+\RH_2,\notag
\end{align}
where $\eta_0>0$ is suitably small. It is straightforward to see that
$$
\RH_2\leq C\eta_0 \sup\limits_{0\leq s\leq t}\|w_l\pa_\ze^{\vth}g_{2}(s)\|_{L^\infty}.
$$
For $\RH_1$, since $s-s'\geq\eta_0$ in this integral, the Jacobian
\begin{align}
\tilde{\mathcal {J}}=:\Bigg|\left|\frac{\pa X(s')}{\pa v_\ast}\right|\Bigg|
=&\Bigg|\left|(\beta I+\al A)^{-1}\left[e^{-(s'-s)(\beta I+\al A)}-I\right]\right|\Bigg|\notag
\geq(s-s')^3/8\geq\eta_0^3/8,\notag
\end{align}
according to Lemma \ref{expm-lem}. Moreover, if we denote
$$
\Om_y=\left\{y\Big||y-X(s)|\leq \left|(\beta I+\al A)^{-1}\left[e^{-(s'-s)(\beta I+\al A)}-I\right]v_\ast\right|\right\},
$$
then, by applying \eqref{exma-ubd} of Lemma \ref{expm-lem}, we have
\begin{align}
|\Om_y|\leq C(s-s')e^{C\al (s-s')}.\notag
\end{align}
With these, one gets by a change of variable $X(s')\rightarrow y$
that
\begin{align}
\RH_2\leq& C\int_0^te^{-c_0(t-s)}
\int_0^{s-\eta_0}e^{-c_0(s-s')}
\int_{|v'_\ast|\leq 2M}\left(\int_{|v_\ast|\leq 2M}|(\pa_\ze^{\vth}g_{2})(s',X(s'),v'_\ast)|^2
dv_\ast\right)^{\frac{1}{2}} dv'_\ast ds'ds\notag\\
\leq& C\int_0^te^{-c_0(t-s)}
\int_0^{s-\eta_0}e^{-c_0(s-s')}
\int_{|v'_\ast|\leq 2M}\left(\int_{\Om_y}\tilde{\mathcal {J}}^{-3}|(\pa_\ze^{\vth}g_{2})(s',y,v'_\ast)|^2
dy\right)^{\frac{1}{2}} dv'_\ast ds'ds\notag\\
\leq& C\eta_0^{-3/2}\int_0^te^{-c_0(t-s)}
\int_0^{s-\eta_0}e^{-c_0(s-s')} \left(|\Om_y|^{\frac{1}{2}}+1\right)
\int_{|v'_\ast|\leq 2M}\left(\int_{\T^3}|(\pa_\ze^{\vth}g_{2})(s',y,v'_\ast)|^2
dy\right)^{\frac{1}{2}} dv'_\ast ds'ds
\notag\\
\leq& C\eta_0^{-3/2}\sup\limits_{0\leq s\leq t}\left(\int_{\R^3}\int_{\T^3}|(\pa_\ze^{\vth}g_{2})(s,y,v)|^2
dydv\right)^{\frac{1}{2}}.\notag
\end{align}
Thus, it follows
\begin{align}
\RI\leq& C\sup\limits_{0\leq s\leq t}\sum\limits_{\ze'\leq \ze}\|w_l\pa_{\ze'}^{\vth}g_{1}(s)\|_{L^\infty}
+C{\bf 1}_{\ze>0}\sup\limits_{0\leq s\leq t}\sum\limits_{\ze'< \ze\atop{\ze'+\vth'=\ze+\vth}} \|w_l\pa_{\ze'}^{\vth'}g_{2}(s)\|_{L^\infty}\notag\\
&+C\eta_0 \sup\limits_{0\leq s\leq t}\|w_l\pa_\ze^{\vth}g_{2}(s)\|_{L^\infty}
+C_{\eta_0}\sup\limits_{0\leq s\leq t}\|\pa_\ze^{\vth}g_{2}(s)\|.\notag
\end{align}
This together with \eqref{g2k-es1} further gives
\begin{align}\label{g2k-es2.v}
|w_l\pa_\ze^{\vth}g_{2}(t,x,v)|\leq&
C\sup\limits_{0\leq s\leq t}\sum\limits_{\ze'\leq \ze}\|w_l\pa_{\ze'}^{\vth}g_{1}(s)\|_{L^\infty}
+{\bf 1}_{\ze>0}C\sup\limits_{0\leq s\leq t}\sum\limits_{\ze'< \ze\atop{\ze'+\vth'=\ze+\vth}} \|w_l\pa_{\ze'}^{\vth'}g_{2}(s)\|_{L^\infty}\notag\\
&+C\left(\al+e^{-\frac{\vps M^2}{8}}+\frac{1}{M}+\eta_0\right)\sup\limits_{0\leq s\leq t}\sum\limits_{\ze'\leq \ze} \|w_l\pa_{\ze'}^{\vth}g_{2}(s)\|_{L^\infty}
+C_{\eta_0}\sup\limits_{0\leq s\leq t}\|\pa_\ze^{\vth}g_{2}(s)\|.
\end{align}
Finally, taking a linear combination of \eqref{g1k-es1} and \eqref{g2k-es2.v} with $|\ze|=0,1,\cdots, N$ and $|\ze|+|\vth|\leq N$, respectively, and adjusting constants, we conclude
\begin{align}\label{g1k-es2}
\sup\limits_{0\leq s\leq t}\sum\limits_{|\ze|+|\vth|\leq N}\|w_l\pa_\ze^{\vth}g_{1}(s)\|_{L^\infty}
\leq&\sum\limits_{|\ze|+|\vth|\leq N}\|w_l\pa_\ze^{\vth}f_0\|_{L^\infty}+
C\al\sup\limits_{0\leq s\leq t}\sum\limits_{|\ze|+|\vth|\leq N} \|w_l\pa_\ze^{\vth}g_{2}(s)\|_{L^\infty},
\end{align}
and
\begin{align}\label{g2k-es2}
\sup\limits_{0\leq s\leq t}\sum\limits_{|\ze|+|\vth|\leq N}\|w_l\pa_\ze^{\vth}g_{2}(s)\|_{L^\infty}\leq&
C\sup\limits_{0\leq s\leq t}\sum\limits_{|\ze|+|\vth|\leq N}\|w_l\pa_\ze^{\vth}g_{1}(s)\|_{L^\infty}
+C\sup\limits_{0\leq s\leq t}\sum\limits_{|\ze|+|\vth|\leq N}\|\pa_\ze^{\vth}g_2(s)\|.
\end{align}
Actually, from the proof presented above, we have the following refined estimates concerning the $L^\infty$ norm of $g_1$ and $g_2$ without velocity derivatives.
\begin{lemma}
Under the hypothesis \eqref{ap-as},
it holds that
\begin{align}\label{g1-g2-z}
\left\{\begin{array}{rll}
&\sup\limits_{0\leq s\leq t}\|w_lg_{1}(s)\|_{L^\infty}
\leq C\|w_lf_0\|_{L^\infty}+
C\al\sup\limits_{0\leq s\leq t} \|w_lg_{2}(s)\|_{L^\infty},\\
&\sup\limits_{0\leq s\leq t}\|w_lg_{2}(s)\|_{L^\infty}\leq
C\sup\limits_{0\leq s\leq t}\|w_lg_{1}(s)\|_{L^\infty}
+C\sup\limits_{0\leq s\leq t}\|g_2(s)\|,
\end{array}\right.
\end{align}
and
\begin{align}\label{g1-g2-hs}
\left\{\begin{array}{rll}
&\sup\limits_{0\leq s\leq t}\sum\limits_{1\leq |\vth|\leq N}|w_l\pa^\vth g_{1}(s)\|_{L^\infty}
\leq C\sum\limits_{1\leq |\vth|\leq N}\|w_l\pa^\vth f_0\|_{L^\infty}+
C\al\sup\limits_{0\leq s\leq t} \sum\limits_{1\leq |\vth|\leq N}\|w_l\pa^\vth g_{2}(s)\|_{L^\infty},\\
&\sup\limits_{0\leq s\leq t}\sum\limits_{1\leq |\vth|\leq N}\|w_l\pa^\vth g_{2}(s)\|_{L^\infty}\leq
C\sup\limits_{0\leq s\leq t}\sum\limits_{1\leq |\vth|\leq N}\|w_l\pa^\vth g_{1}(s)\|_{L^\infty}
+C\sup\limits_{0\leq s\leq t}\sum\limits_{1\leq |\vth|\leq N}\|\pa^\vth g_2(s)\|.
\end{array}\right.
\end{align}

\end{lemma}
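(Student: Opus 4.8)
The plan is to read off both estimates from the mild-form analysis already set up for a general multi-index, now specialized to $|\ze|=0$. Setting $\ze=0$ in the representations \eqref{g1k} and \eqref{g2k}, the commutator contributions $\CH_2,\CH_3,\CH_4,\CH_5$ and $\CH_{10},\CH_{11},\CH_{12},\CH_{14}$ all carry the factor ${\bf 1}_{|\ze|>0}$ and therefore drop out, so that $w_lg_1$ and $w_lg_2$ (with $\vth=0$) are expressed purely through transport along \eqref{CH2-p2}, the weighted datum $w_lf_0$, the nonlocal operators $\chi_M\CK$, $K$ and $(1-\chi_M)\mu^{-1/2}\CK$, the thermostat/deformation lower-order terms, the extra fourth-order moment term $\al Av\cdot(\na_v\sqrt{\mu})(|v|^2-3)\sqrt{\mu}c_2$, and the quadratic terms. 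I would then rerun the bounds for $\CH_1,\CH_6,\CH_7,\CH_8,\CH_9$ and $\CH_{13},\CH_{15},\CH_{16}$ verbatim, observing that with $\ze=0$ there are no lower-order velocity-derivative terms on the right: Lemma \ref{CK} / Lemma \ref{g-ck-lem} give $|\CH_6|\leq({\bf 1}_{\ga>0}CM^{-\ga/2}+\varsigma)\|w_lg_1\|_{L^\infty}$; the $\mathbf{k}_{w,p}$-splitting together with Lemma \ref{Kop} gives $|\CH_{13}|\leq C(e^{-\vps M^2/8}+M^{-1})\|w_lg_2\|_{L^\infty}+\RI$; the deformation terms in $\CH_7$ (including the moment term, which has Gaussian decay and is controlled by $|c_2|\lesssim\|g_2\|\lesssim\|w_lg_2\|_{L^\infty}$ for $l$ large) are bounded by $C\al\|w_lg_2\|_{L^\infty}$; Lemma \ref{op.es.lem} with the {\it a priori} assumption \eqref{ap-as} gives $|\CH_8|\leq C\al^2(\|w_lg_1\|_{L^\infty}+\|w_lg_2\|_{L^\infty})$, $|\CH_9|\leq C\al(\|w_lg_1\|_{L^\infty}+\|w_lg_2\|_{L^\infty})$, and by Lemma \ref{Ga}, $|\CH_{16}|\leq C\al\|w_lg_2\|_{L^\infty}$; finally $|\CH_{15}|\leq C\|w_lg_1\|_{L^\infty}$. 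The term $\RI$ is handled exactly as in the main argument: inserting the bound for $w_lg_2$ into its definition and changing variables $X(s')\mapsto y$, one uses the Jacobian lower bound $\tilde{\mathcal J}\geq(s-s')^3/8$ and the region bound $|\Om_y|\leq C(s-s')e^{C\al(s-s')}$ from Lemma \ref{expm-lem} to get $\RI\leq C\|w_lg_1\|_{L^\infty}+C\eta_0\|w_lg_2\|_{L^\infty}+C_{\eta_0}\|g_2\|$. Taking $M$ large, then $\al$ and $\eta_0$ small and absorbing yields the coupled pair \eqref{g1-g2-z}.

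For \eqref{g1-g2-hs} I would apply $\pa^\vth=\pa_x^\vth$ with $1\leq|\vth|\leq N$ to the equations for $f_1,f_2$. All the operators appearing there — $v\cdot\na_x$, $-\beta\na_v\cdot(v\,\cdot)$, $-\al\na_v\cdot(Av\,\cdot)$, $\nu$, $L$, $K$, $\CK$ and the moment operator — are $x$-independent and act only in $v$, so $\pa^\vth$ commutes with every one of them; the only new contributions come from differentiating the quadratic terms $Q(f_1,f_1)$, $Q(f_1,\sqrt{\mu}f_2)$, $Q(\sqrt{\mu}f_2,f_1)$, $\Ga(f_2,f_2)$, which produce sums of products $\pa^{\vth'}(\cdot)\,\pa^{\vth-\vth'}(\cdot)$ with $\vth'\leq\vth$. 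Since $\ze=0$ still holds, the commutator terms $\CH_2$–$\CH_5$, $\CH_{10}$–$\CH_{12}$, $\CH_{14}$ remain zero, and the estimates of the previous paragraph go through with $g_i$ replaced by $\pa^\vth g_i$, except that the quadratic terms now contribute $C\al^2\sum_{\vth'\leq\vth}(\|w_l\pa^{\vth'}g_1\|_{L^\infty}+\|w_l\pa^{\vth'}g_2\|_{L^\infty})$ and $C\al\sum_{\vth'\leq\vth}\|w_l\pa^{\vth'}g_2\|_{L^\infty}$ by \eqref{ap-as}. The $\vth'=0$ pieces are controlled by \eqref{g1-g2-z} together with the smallness $\al^2$, and the genuinely lower-order pieces with $1\leq|\vth'|<|\vth|$ are absorbed into the left-hand side after summing over $1\leq|\vth|\leq N$ and adjusting constants; this gives \eqref{g1-g2-hs}.

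The only delicate point, rather than a genuine obstacle, is that the $L^2$–$L^\infty$ reduction buried in $\RI$ must be carried out uniformly in $t\in(0,\infty)$, which is precisely why the geometric estimates of Lemma \ref{expm-lem} are invoked; the rest is a direct rerun of the already-established bounds with the commutator terms switched off and the extra moment term tracked. The $L^2$ quantities $\|g_2\|$ and $\sum_{1\leq|\vth|\leq N}\|\pa^\vth g_2\|$ produced on the right-hand side are not closed here — they will be controlled by the macroscopic/microscopic energy estimate \eqref{g2-l2-plus} in the subsequent $L^2$ analysis, after which \eqref{g1-g2-z} and \eqref{g1-g2-hs} feed back into the full $W^{N,\infty}$ bound \eqref{ap-es}.
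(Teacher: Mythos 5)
Your proposal is correct and follows essentially the same route as the paper: the lemma is obtained there as a direct specialization of the mild-form estimates \eqref{g1k-es1} and \eqref{g2k-es2.v} to $\ze=0$ (with the $\vth$-sums split into $|\vth|=0$ and $1\leq|\vth|\leq N$), exactly as you rerun the bounds for $\CH_1,\CH_6$--$\CH_9,\CH_{13},\CH_{15},\CH_{16}$ with the commutator terms switched off and treat $\RI$ via Lemma \ref{expm-lem}. Your observation that the $L^2$ quantities on the right are closed only later by \eqref{g2-l2-plus} matches the paper's structure.
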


\noindent\underline{{\it Step 2. $L^2$ estimates.}} To close our final estimate, it remains then to deduce the $H_{x,v}^N$ estimate of $e^{\la_0t}f_2(t,v)$ in \eqref{g2k-es2}. The computation is divided into the following three sub-steps.

\noindent\underline{{\it Step 2.1. The estimates for $c$.}}
In this sub-step, we consider the basic $L^2$ estimate for $c$, which is difficult to be obtained due to the exponential growth of the heat flux, cf. \cite{JNV-ARMA}.
Recall $[g_1,g_2](t,v)=e^{\la_0t}[f_1,f_2](t,v)$ and $\sqrt{\mu}g=g_1+\sqrt{\mu}g_2$. It is straightforward to see that
$g$ satisfies
\begin{align}\label{gb-eq}
\pa_t g&+v\cdot\na_xg-\beta\mu^{-1/2}\na_v\cdot(v\sqrt{\mu}g)-\al\mu^{-1/2}\na_v\cdot(Av\sqrt{\mu}g)-\la_0g+Lg
\notag\\&=\underbrace{e^{-\la_0t}\Ga(g,g)+\al\{\Ga(G_1+\al G_R,g)+\Ga(g,G_1+\al G_R)\}}_{\RR},
\end{align}
with
\begin{align}\label{gb-id}
\sqrt{\mu}g(0,x,v)=f_{0}(x,v).
\end{align}
Similar to \eqref{abc.def}, we define
\begin{align}
\FP_0 g&=\{a(t,x)+\Fb(t,x)\cdot v+c(t,x)(|v|^2-3)\}\sqrt{\mu},\notag \\
\bar{\FP}_0g_1&=\{a_1(t,x)+\Fb_1(t,x)\cdot v+c_1(t,x)(|v|^2-3)\}\mu,\notag
\end{align}
and recall the definition
$$
\FP_0g_2=\{a_2(t,x)+\Fb_2(t,x)\cdot v+c_2(t,x)(|v|^2-3)\}\sqrt{\mu}.\notag
$$
Then it follows
\begin{align}\label{abc-p1}
a(t,x)=a_1(t,x)+a_2(t,x),\ \Fb(t,x)=\Fb_1(t,x)+\Fb_2(t,x),\ c(t,x)=c_1(t,x)+c_2(t,x),
\end{align}
for any $t\geq0$ and $x\in\T^3$. In addition, \eqref{gb-eq} together with \eqref{id.cons} and \eqref{gb-id} implies
\begin{align}\label{abc-p2}
\int_{\T^3}a(t,x)dx=0,\ \int_{\T^3}b_i(t,x)dx=0,\ i=1,2,3,
\end{align}
where we have denoted $\Fb=(b_1,b_2,b_3)$.
Next, taking the moments
$$
\sqrt{\mu},\ v_i\sqrt{\mu},\ \frac{1}{6}(|v|^2-3)\sqrt{\mu},\  i=1,2,3
$$
for the equation \eqref{gb-eq}, one has
\begin{align}
\pa_ta+\na_x\cdot \Fb=0,\notag
\end{align}
\begin{align}\label{b-eq}
\pa_tb_i+\pa_i(a+2c)+\sum\limits_{j}((\beta-\la_0) I+\al A)_{ij}b_j+\sum\limits_{j}\pa_j\lag \RA_{ij},\FP_1g\rag=0,
\end{align}
and
\begin{align}\label{c-eq}
\pa_tc+\frac{1}{3}\na_x\cdot\Fb+\beta_1\al^2(2c+a)-\la_0 c+\frac{1}{6}\na_x\cdot\lag (|v|^2-5)v\sqrt{\mu},\FP_1g\rag+\frac{\al}{3}\sum\limits_{i,j}a_{ij}\lag \RA_{ij},\FP_1g\rag=0,
\end{align}
where
$$
\RA_{ij}=(v_iv_j-\frac{\de_{ij}}{3}|v|^2)\sqrt{\mu},\ i,j=1,2,3
$$
with $\de_{ij}$ being the Kronecker delta, and the identity $
\beta_0+\frac{1}{3}{\rm tr}A=0$ was used while deriving \eqref{c-eq}.

Furthermore, taking the higher order moments $\RA_{ij}$ and
$$
\RB_i\eqdef\frac{1}{10}(|v|^2-5)v_i\sqrt{\mu},\ i,j=1,2,3
$$
for the equation \eqref{gb-eq}, respectively, we obtain
\begin{align}\label{2rd-db}
\pa_t&\lag \RA_{ij},\FP_1g\rag+\pa_ib_j+\pa_jb_i-\frac{2}{3}\na_x\cdot\Fb\de_{ij}
+\lag\RA_{ij},v\cdot\na_x \FP_1g\rag
-\beta\lag \mu^{-1/2}\na_v\cdot(v\sqrt{\mu} g),\RA_{ij}\rag
\notag\\&-\al\lag\mu^{-1/2}\na_v\cdot(Av\sqrt{\mu}g),\RA_{ij}\rag
-\la_0\lag \FP_1g,\RA_{ij}\rag=\lag-Lg+\RR,\RA_{ij}\rag,
\end{align}
and
\begin{align}\label{2rd-dc}
\pa_t&\lag \RB_i,\FP_1g\rag+\pa_ic
+\lag \RB_i,v\cdot\na_x\FP_1g\rag
-\beta\lag \mu^{-1/2}\na_v\cdot(v\sqrt{\mu}g),\RB_i\rag
\notag\\&-\al\lag\mu^{-1/2}\na_v\cdot(Av\sqrt{\mu}g),\RB_i\rag
-\la_0\lag \FP_1g,\RB_i\rag=\lag-Lg+\RR,\RB_i\rag.
\end{align}

Choosing $\la_0=\beta_1 \al^2$, we get from the inner product of $(\eqref{c-eq},c)$
that
\begin{align}
\frac{1}{2}\frac{d}{dt}\|c\|^2
&+\beta_1\al^2\|c\|^2
+\frac{\al}{3}\sum\limits_{i,j}a_{ij}(c,\lag\mathscr{A}_{ij},\FP_1g_1\rag)
+\frac{\al}{3}\sum\limits_{i,j}a_{ij}(c_1,\lag\mathscr{A}_{ij},\FP_1g_2\rag)
\notag\\
&+\frac{\al}{3}\sum\limits_{i,j}a_{ij}(c_2,\lag\mathscr{A}_{ij},\FP_1g_2\rag)-\frac{1}{3}(\Fb,\na_xc)+\beta_1\al^2(a,c)
\notag\\&-\frac{1}{6}(\lag (|v|^2-5)v\sqrt{\mu},\FP_1g\rag,\na_xc)=0.\label{dis-c-p1}
\end{align}
Note that the delicate term $\frac{\al}{3}\sum\limits_{i,j}a_{ij}(c_2,\lag\mathscr{A}_{ij},\FP_1g_2\rag)$ will be cancelled later on.

We now derive the $L^2$ estimate on $\FP_1g_2$.
Recall that $g_2$ satisfies
\begin{align}\label{f2-eq-2}
\pa_t g_2&+v\cdot\na_xg_2-\beta\na_v\cdot(vg_2)-\al\na_v\cdot(Avg_2)+\al Av\cdot(\na_v\sqrt{\mu})(|v|^2-3)c_{2}-\la_0g_2+Lg_2\notag\\
=&(1-\chi_M)\mu^{-\frac{1}{2}}\CK g_1+e^{-\la_0t}\Ga(g_2,g_2),
\end{align}
and
\begin{align}
g_2(0,x,v)=0.\notag
\end{align}
Taking the inner product of \eqref{f2-eq-2} and $\FP_1g_2$ over $(x,v)\in\T^3\times\R^3$ and applying Cauchy-Schwarz's inequality, one has
\begin{align}\label{g2-l2}
\frac{1}{2}\frac{d}{dt}&\|\FP_1g_2\|^2
-\al (Av \cdot\na_v\{[a_2+\Fb_2\cdot v]\sqrt{\mu}\},\FP_1g_2)-2\al(Av\cdot v\sqrt{\mu}c_2,\FP_1g_2)
+\la\|\FP_1g_2\|^2_\nu\notag\\
\leq& C\|\na_x[a_2,\Fb_2,c_2]\|^2+C\al^2\|w_lg_2\|_{L^\infty}^2
+C\|w_lg_1\|_\infty^2\notag\\
\leq& C\|\na_x[a,\Fb,c]\|^2
+C\al^2\|w_lg_2\|_{L^\infty}^2
+C\|w_lg_1\|_\infty^2,
\end{align}
according to \eqref{abc-p1}, \eqref{abc-p2} and the following estimate
\begin{align}
|(\Ga(g_2,g_2),\FP_1g_2)|\leq&\eta\|\FP_1g_2\|^2_\nu+C_\eta \int_{\T^3}\|\nu^{\frac{1}{2}}g_2\|_{L^2_v}^4dx
\leq \eta\|\FP_1g_2\|^2_\nu+C_\eta\al^2 \|w_lg_2\|_{L^\infty}^2\notag
\end{align}
in the case of $l\geq2.$

Notice that $(Av\cdot v\sqrt{\mu}c_2,\FP_1g_2)=\sum\limits_{i,j}a_{ij}(c_2,\lag\mathscr{A}_{ij},\FP_1g_2\rag)$,
we now get from the summation of $\eqref{dis-c-p1}$ and $\frac{1}{6}\eqref{g2-l2}$ that
\begin{align}\label{g2-l2-plus}
\frac{1}{2}\frac{d}{dt}&\|c\|^2
+\frac{1}{12}\frac{d}{dt}\|\FP_1g_2\|^2
+\la\al^2\|c\|^2+\la\|\FP_1g_2\|^2_\nu\notag\\
\leq& C_\eta\sup\limits_{0\leq s\leq t}\|\na_x[a,\Fb,c](s)\|^2
+(C\al^2+\eta)\|w_lg_2\|_{L^\infty}^2
+C\sup\limits_{0\leq s\leq t}\|w_lg_1(s)\|_\infty^2,
\end{align}
where we have used the relations $\|g_1\|\leq\|\FP_0g_2(s)\|+\|\FP_1g_2(s)\|$, and $\|\FP_0g_2(s)\|\leq C\|[a,\Fb,c]\|+\|w_lg_{1}(s)\|_{L^\infty}$ for $l>5/2$ according to \eqref{abc-p1}. Further, \eqref{g2-l2-plus} gives
\begin{align}\label{g2-tt-eng}
\sup\limits_{0\leq s\leq t}\al^2\|g_2(s)\|^2
\leq&C\sup\limits_{0\leq s\leq t}\|\na_x[a,\Fb,c](s)\|^2+(C\al^2+\eta)\sup\limits_{0\leq s\leq t}\|w_lg_2(s)\|_{L^\infty}^2
\notag\\&+C\sup\limits_{0\leq s\leq t}\|w_lg_1(s)\|_\infty^2.
\end{align}
Next, substituting \eqref{g2-tt-eng} into $\eqref{g1-g2-z}_2$, one has
\begin{align}\label{g2-mic-lif-p2}
\al\sup\limits_{0\leq s\leq t}\|w_lg_{2}(s)\|_{L^\infty}
\leq&
C(\al+(\eta+\al)^{\frac{1}{2}})\sup\limits_{0\leq s\leq t}\|w_lg_{1}(s)\|_{L^\infty}
+C\sup\limits_{0\leq s\leq t}\|\na_x[a,\Fb,c](s)\|.
\end{align}
Finally, we get from $\eqref{g1-g2-z}_1$ and \eqref{g2-mic-lif-p2} that
\begin{align}\label{g12-lif-z}
\sup\limits_{0\leq s\leq t}\|w_lg_{1}(s)\|_{L^\infty}+\al\sup\limits_{0\leq s\leq t}\|w_lg_{2}(s)\|_{L^\infty}
\leq C\|w_lf_0\|_{L^\infty}+C\sup\limits_{0\leq s\leq t}\|\na_x[a,\Fb,c](s)\|.
\end{align}

\noindent\underline{{\it Step 2.2. Higher order estimates for $[a,\Fb,c]$.}}
We are now in a position to deduce the higher order $L^2$ estimates on $[a,\Fb,c]$. To do this, we first get from \eqref{2rd-db}
that
\begin{align}\label{2rd-db-p2}
\sum\limits_{i,i\neq j}&\pa_t\pa_i\lag \RA_{ij},\FP_1g\rag
+\pa_t\pa_j\lag \RA_{jj},\FP_1g\rag
+\Delta b_j+\frac{1}{3}\pa_j\na\cdot\Fb=\RT,
\end{align}
with
\begin{align}
\RT=&
\sum\limits_{i,i\neq j}\pa_i\bigg\{-\lag\RA_{ij},v\cdot\na_x \FP_1g\rag
+\beta\lag \mu^{-1/2}\na_v\cdot(v\sqrt{\mu} g),\RA_{ij}\rag
\notag\\&\qquad\qquad+\al\lag\mu^{-1/2}\na_v\cdot(Av\sqrt{\mu}g),\RA_{ij}\rag
+\la_0\lag \FP_1g,\RA_{ij}\rag+\lag-Lg+\RR,\RA_{ij}\rag\bigg\}\notag\\
&+\pa_j\bigg\{-\lag\RA_{ij},v\cdot\na_x \FP_1g\rag
+\beta\lag \mu^{-1/2}\na_v\cdot(v\sqrt{\mu} g),\RA_{ij}\rag
\notag\\&\qquad\qquad+\al\lag\mu^{-1/2}\na_v\cdot(Av\sqrt{\mu}g),\RA_{ij}\rag
+\la_0\lag \FP_1g,\RA_{ij}\rag+\lag-Lg+\RR,\RA_{ij}\rag\bigg\}.\notag
\end{align}
Letting $N-1\geq|\vth|\geq1$,
one has from $\sum\limits_{j}(\pa^\vth\eqref{2rd-db-p2},\pa^\vth b_j)$, $\sum\limits_{i}(\pa^\vth\eqref{2rd-dc},\pa^\vth \pa_ic)$ and
$\sum\limits_{i}(\pa^\vth\eqref{b-eq},\pa^\vth \pa_ia)$ that
\begin{align}\label{2rd-db-p3}
\frac{d}{dt}&\CE^{int}_{\Fb}+\|\na_x\pa^\vth\Fb\|^2+\frac{1}{3}\|\pa^\vth\na_x\cdot\Fb\|^2\notag\\
=&\sum\limits_{j}\bigg(\sum\limits_{i,i\neq j}\pa_i\pa^\vth\lag \RA_{ij},\FP_1g\rag
+\pa_j\pa^\vth\lag \RA_{jj},\FP_1g\rag,\pa^\vth \Big(\pa_j(a+2c)\notag\\&\qquad+\sum\limits_{i}((\beta-\la_0) I+\al A)_{ij}b_i+\sum\limits_{i}\pa_i\lag \RA_{ij},\FP_1g\rag\Big)\bigg)-\sum\limits_{j}(\pa^\vth\RT,\pa^\vth b_j)
\notag\\
\leq&(\eta+\al)\|\pa^\vth[a,\Fb,c]\|^2+C(\eps_0^2+\al^2)\sum\limits_{1\leq |\vth'|\leq N}\|w_l\pa^{\vth'}g_2\|_{L^\infty}^2
\notag\\&+C_\eta\sum\limits_{1\leq|\vth'|\leq N}\{\|\pa^{\vth'}\FP_1g_2\|^2+\|w_l\pa^{\vth'} g_1\|^2_{L^\infty}\},
\end{align}
\begin{align}\label{2rd-dc-p2}
\frac{d}{dt}\CE_c^{int}&+\|\na_x\pa^\vth c\|^2\notag\\
=&-\sum\limits_{i}\bigg(\pa^\vth \Big(\lag \RB_i,v\cdot\na_x\FP_1g\rag
-\beta\lag \mu^{-1/2}\na_v\cdot(v\sqrt{\mu}g),\RB_i\rag
\notag\\&\qquad-\al\lag\mu^{-1/2}\na_v\cdot(Av\sqrt{\mu}g),\RB_i\rag
-\la_0\lag \FP_1g,\RB_i\rag-\lag Lg+\RR,\RB_i\rag\Big),\pa^\vth \pa_ic\bigg)
\notag\\
&+\sum\limits_{i}\bigg(\pa_i\pa^\vth\lag \RB_i,\FP_1g\rag,\pa^\vth \Big(\beta_1\al^2(2c+a)-\la_0 c+\frac{1}{6}\na_x\cdot\lag (|v|^2-5)v,\FP_1g\rag\notag\\&\qquad+\frac{\al}{3}\sum\limits_{i,j}a_{ij}\lag \RA_{ij},\FP_1g\rag\Big)\bigg)\notag\\
\leq&(\eta+\al)\|\pa^\vth[a,\Fb,c]\|^2+C(\eps_0^2+\al^2)\sum\limits_{1\leq|\vth'|\leq N}\|w_l\pa^{\vth'}g_2\|_{L^\infty}^2
\notag\\&+C_\eta\sum\limits_{1\leq|\vth'|\leq N}\{\|\pa^{\vth'}\FP_1g_2\|^2+\|w_l\pa^{\vth'} g_1\|^2_{L^\infty}\},
\end{align}
and
\begin{align}
\frac{d}{dt}\CE_a^{int}&+\|\na_x\pa^\vth a\|^2\notag\\=&-\sum\limits_{i}(\pa^\vth\pa_ib_i,\pa^\vth \na_x\cdot\Fb)
-2\sum\limits_{i}(\pa^\vth\pa_ic,\pa^\vth \pa_ia)\notag\\&
-\sum\limits_{i,j}\bigg({\pa^\vth
\Big[\big((\beta-\la_0) I+\al A\big)_{ij}b_j+\sum\limits_{j}\pa_j\lag \RA_{ij},\FP_1g\rag\Big]},\pa^\vth \pa_ia\bigg)
\notag\\
\leq&(\eta+\al)\|[\pa^\vth a,\na_x\pa^\vth a]\|^2+C\|\na_x\pa^\vth[\Fb,c]\|^2
+C_\eta\sum\limits_{1\leq|\vth'|\leq N}\{\|\pa^{\vth'}\FP_1g_2\|^2+\|w_l\pa^{\vth'} g_1\|^2_{L^\infty}\},\notag
\end{align}
respectively,
where we have set
\begin{align}\label{abc-in-eng}
\left\{\begin{array}{rll}
\CE_\Fb^{int}=&-\sum\limits_{j}\bigg(\pa^\vth\Big(\sum\limits_{i,i\neq j}\pa_i\lag \RA_{ij},\FP_1g\rag
+\pa_j\lag \RA_{jj},\FP_1g\rag\Big),\pa^\vth b_j\bigg),\\
\CE_c^{int}=&\sum\limits_{i}(\pa^\vth\lag \RB_i,\FP_1g\rag,\pa^\vth \pa_ic),\\
\CE_a^{int}=&\sum\limits_{i}(\pa^\vth\pa_ib_i,\pa^\vth \pa_ia),
\end{array}\right.
\end{align}
and in addition, for $l>4$, the following estimates of the type
\begin{align}
\|\pa^\vth\lag Lg,\RB_{i}\rag\|^2\leq& C\|\nu^{-1}\Ga(\pa^\vth \FP_1g,\sqrt{\mu})\|^2
+C\|\nu^{-1}\Ga(\sqrt{\mu},\pa^\vth \FP_1g)\|^2
\notag\\
\leq&C\|\FP_1\pa^\vth g_2\|^2+C\|w_l\pa^\vth g_2\|^2_{L^\infty}\notag
\end{align}
and
\begin{align}
\|\pa^\vth\lag \Ga(g,g),\RB_i\rag\|^2\leq& C\sum\limits_{\vth'\leq\vth}\int_{\T^3}\|\nu^{-1}w_{l}Q(\sqrt{\mu}\pa^{\vth'}g,\sqrt{\mu}\pa^{\vth-\vth'}g)\|_{L^\infty}^2dx\notag\\
\leq& C\sum\limits_{\vth'\leq\vth}\|w_l[\pa^{\vth'}g_1,\pa^{\vth-\vth'}g_2]\|^4_{L^\infty}\leq C\eps_0^2\sum\limits_{1\leq|\vth'|\leq\vth}\|w_l[\pa^{\vth'}g_1,\pa^{\vth'}g_2]\|^2_{L^\infty}\notag
\end{align}
have been used.

Consequently, letting $\ka_1>0$ be suitably small, we get from the summation of \eqref{2rd-db-p3}, \eqref{2rd-dc-p2} and $\ka_1\times\eqref{2rd-dc-p2}$ that
\begin{align}\label{2rd-dabc}
\frac{d}{dt}[\ka_1\CE_a^{int}&+\CE_\Fb^{int}+\CE_c^{int}]
+\la\|\na_x[a,\Fb,c]\|^2
+\la\sum\limits_{1\leq|\vth|\leq N}\|\na_x\pa^\vth [a,\Fb,c]\|^2\notag\\
\leq& C\al^2\sum\limits_{1\leq|\vth|\leq N}\|w_l\pa^{\vth}g_2\|_{L^\infty}^2
+C\sum\limits_{1\leq|\vth|\leq N}\{\|\pa^{\vth}\FP_1g_2\|^2+\|w_l\pa^{\vth} g_1\|^2_{L^\infty}\},
\end{align}
where the Poincar\'{e}'s inequality $\|\na_x[a,\Fb,c]\| \leq C\|\na^2_x[a,\Fb,c]\|$ has been also used.

\noindent\underline{{\it Step 2.3. Higher order estimates for $\FP_1g_2$.}} With the above estimates in our hands, we then turn to obtain the higher order $L^2$ estimates on $\FP_1g_2$. For this, letting $1\leq|\vth|\leq N$, we take the inner product of $\pa^{\vth}\eqref{f2-eq-2}$ with $\pa^{\vth}g_2$ and apply Lemma \ref{es-L} so as to obtain
\begin{align}\label{g2-l2-p2}
\sum\limits_{1\leq|\vth|\leq N}&\frac{d}{dt}\|\pa^{\vth}g_2\|^2+\de_0\sum\limits_{1\leq|\vth|\leq N}\|\pa^{\vth}\FP_1g_2\|_\nu^2\notag\\
\leq& (C\al^2+\eta+\la_0)\sum\limits_{1\leq|\vth|\leq N}\|\FP_0\pa^{\vth}g_2\|^2
+C\al^2\sum\limits_{1\leq|\vth|\leq N}\|w_l\pa^{\vth}g_2\|_{L^\infty}^2+C_\eta\sum\limits_{1\leq|\vth|\leq N}\|w_l\pa^{\vth}g_1\|_\infty^2,
\end{align}
where according to Lemma \ref{Ga} and the {\it a priori} assumption \ref{ap-as}, the following estimate has been used:
\begin{align}
|(\pa^\vth\Ga(g_2,g_2),\pa^{\vth}g_2)|=&|(\pa^\vth\Ga(g_2,g_2),\pa^{\vth}\FP_1g_2)|\notag\\
\leq& \eta\|\pa^{\vth}\FP_1g_2\|^2_\nu
+C_\eta \int_{\T^3}\|\nu^{\frac{1}{2}}\pa^{\vth'}g_2\|_{L^2_v}^2\|\nu^{\frac{1}{2}}\pa^{\vth-\vth'}g_2\|_{L^2_v}^2dx
\notag\\
\leq& \eta\|\pa^{\vth}\FP_1g_2\|^2_\nu
+C_\eta \|w_l\pa^{\vth'}g_2\|_{L^\infty}^2\|w_l\pa^{\vth-\vth'}g_2\|_{L^\infty}^2
\notag\\
\leq& \eta\|\pa^{\vth}\FP_1g_2\|^2_\nu
+C_\eta \al^2\sum\limits_{1\leq |\vth|\leq N}\|w_l\pa^{\vth}g_2\|_{L^\infty}^2,\notag
\end{align}
with $l\geq 2$ required.

On the other hand, from \eqref{abc-p1} and \eqref{abc-p2}, it follows
\begin{equation}\label{abc-12}
\|{[a_2,\Fb_2,c_2]}\|\leq \|{[a,\Fb,c]}\|+C\|w_lg_1\|_\infty,\ \|[a,\Fb]\|\leq C\|\na_x[a,\Fb]\|\leq C\|\na_xg_2\|+C\|w_l\na_xg_1\|_\infty,
\end{equation}
for $l>5/2.$ In addition, by \eqref{abc-in-eng}, we have for $|\vth|\leq N-1$
\begin{align}
|\ka_1\CE_a^{int}+\CE_\Fb^{int}+\CE_c^{int}|
\leq \|\na_x\pa^\vth[a,\Fb,c]\|^2+C\|\na_x\pa^\vth g_2\|^2+C\|w_l\na_x\pa^\vth g_1\|^2_\infty.\notag
\end{align}
Let $\ka_2>0$ be suitably small, then we define
$$
\CE_N^h(t)=\ka_2(\ka_1\CE_a^{int}+\CE_\Fb^{int}+\CE_c^{int})+\sum\limits_{1\leq|\vth|\leq N}\|\pa^{\vth}g_2\|^2,
$$
and hence there exist positive constants $\tilde{C}_1$ and $\tilde{C}_2$  such that
\begin{align}
\sum\limits_{1\leq|\vth|\leq N}&\|\pa^{\vth}g_2\|^2-\tilde{C}_1\sum\limits_{1\leq|\vth|\leq N}\|w_l\pa^\vth g_1\|^2_\infty\notag\\
\leq& \CE_N^h\leq \sum\limits_{1\leq|\vth|\leq N}\|\pa^{\vth}g_2\|^2
+\tilde{C}_2\sum\limits_{1\leq|\vth|\leq N}\|w_l\pa^\vth g_1\|^2_\infty.\notag
\end{align}
By this, \eqref{2rd-dabc} and \eqref{g2-l2-p2} lead us to
\begin{align}
\frac{d}{dt}\CE_N^h(t)+\la\CE_N^h(t)
\leq C\al^2\sum\limits_{1\leq|\vth|\leq N}\|w_l\pa^{\vth}g_2\|_{L^\infty}^2+C\sum\limits_{1\leq|\vth|\leq N}\|w_l\pa^{\vth'} g_1\|^2_{L^\infty},\notag
\end{align}
which further gives
\begin{align}\label{g2-l2-p3}
\sup\limits_{0\leq s\leq t}&\|[a_2,b_2](s)\|^2+
\sup\limits_{0\leq s\leq t}\sum\limits_{1\leq|\vth|\leq N}\|\pa^{\vth}g_2(s)\|^2\notag\\
\leq &C\al^2\sup\limits_{0\leq s\leq t}\sum\limits_{1\leq|\vth|\leq N}\|w_l\pa^{\vth}g_2\|_{L^\infty}^2+
C\sup\limits_{0\leq s\leq t}\sum\limits_{1\leq|\vth|\leq N}\|w_l\pa^{\vth} g_1(s)\|^2_{L^\infty},
\end{align}
where \eqref{abc-12} has been used.

As a consequence, \eqref{g1-g2-hs} and \eqref{g2-l2-p3} imply
\begin{align}
\sup\limits_{0\leq s\leq t}\sum\limits_{1\leq |\vth|\leq N}\|w_l\pa^\vth g_{1}(s)\|_{L^\infty}
+\sup\limits_{0\leq s\leq t}\sum\limits_{1\leq |\vth|\leq N}\|w_l\pa^\vth g_{2}(s)\|_{L^\infty}
\leq C\sum\limits_{1\leq |\vth|\leq N}\|w_l\pa^\vth f_0\|_{L^\infty}.\label{g1-g2-hsto}
\end{align}
Thus, we get from \eqref{g12-lif-z} and \eqref{g1-g2-hsto} that
\begin{align}
\sup\limits_{0\leq s\leq t}&\|w_lg_{1}(s)\|_{L^\infty}+\al\sup\limits_{0\leq s\leq t}\|w_lg_{2}(s)\|_{L^\infty}
\notag\\&+\sup\limits_{0\leq s\leq t}\sum\limits_{1\leq |\vth|\leq N}\|w_l\pa^\vth g_{1}(s)\|_{L^\infty}
+\sup\limits_{0\leq s\leq t}\sum\limits_{1\leq |\vth|\leq N}\|w_l\pa^\vth g_{2}(s)\|_{L^\infty}
\leq& C\sum\limits_{|\vth|\leq N}\|w_l\pa^\vth f_0\|_{L^\infty}.\label{g1-g2-spcl}
\end{align}
\noindent\underline{{\it Step 2.4. The estimates for mixture derivatives.}} In this final sub-step, we shall deduce the $L^2$ estimates on $\pa_\ze^\vth g_2$ with $\ze>0$ and $|\ze|+|\vth|\leq N$. 
To see this, we first get from the inner product of $\pa_\ze^\vth\eqref{f2-eq-2}$ and $\pa_\ze^\vth g_2$ over $(x,v)\in\T^3\times\R^3$  that
\begin{align}
&(\pa_t \pa_\ze^\vth g_2,\pa_\ze^\vth g_2)+{(\pa_\ze^\vth(v\cdot\na_xg_2),\pa_\ze^\vth g_2)}-\beta(\pa_\ze^\vth(\na_v\cdot(vg_2)),\pa_\ze^\vth g_2)-\al(\pa_\ze^\vth\na_v\cdot(Avg_2),\pa_\ze^\vth g_2)\notag\\
&\qquad-\la_0(\pa_\ze^\vth g_2,\pa_\ze^\vth g_2)+(\pa_\ze^\vth Lg_2,\pa_\ze^\vth g_2)
=(\pa_\ze^\vth((1-\chi_M)\mu^{-\frac{1}{2}}\CK g_1),\pa_\ze^\vth g_2)+e^{-\la_0t}(\pa_\ze^\vth\Ga(g_2,g_2),\pa_\ze^\vth g_2),\notag
\end{align}
which gives
\begin{align}
\frac{d}{dt}\|\pa_\ze^\vth g_2\|^2+2(\de_1-\la_0)\|\pa_\ze^\vth g_2\|_\nu^2
\leq& C\|\pa^\vth g_2\|^2+
C\sum\limits_{|\ze'|+|\vth'|\leq N\atop{\ze'<\ze}}\|\pa_{\ze'}^{\vth'} g_2\|^2
+C(\al+\al^2)\sum\limits_{|\ze|+|\vth|\leq N}\|w_l\pa_\ze^\vth g_2\|_{L^\infty}^2\notag\\&+\eta\|\pa_\ze^\vth g_2\|^2
+C_\eta\sum\limits_{\ze'\leq\ze}\|w_l\pa_{\ze'}^\vth g_1\|_{L^\infty}^2,\notag
\end{align}
according to Lemma \ref{es-L} and Lemma \ref{Ga}. Thus, it follows by Gronwall's inequality
\begin{align}
\sup\limits_{0\leq s\leq t}\|\pa_\ze^\vth g_2(s)\|^2
\leq& C\sup\limits_{0\leq s\leq t}\|\pa^\vth g_2(s)\|^2+
C\sup\limits_{0\leq s\leq t}\sum\limits_{|\ze'|+|\vth'|\leq N\atop{\ze'<\ze}}\|\pa_{\ze'}^{\vth'} g_2\|^2
\notag\\&+C(\al+\al^2)\sup\limits_{0\leq s\leq t}\sum\limits_{|\ze|+|\vth|\leq N}\|w_l\pa_\ze^\vth g_2\|_{L^\infty}^2
+C\sup\limits_{0\leq s\leq t}\sum\limits_{\ze'\leq\ze}\|w_l\pa_{\ze'}^\vth g_1(s)\|_{L^\infty}^2,\notag
\end{align}
which further implies
\begin{align}\label{g2-md-es1}
\sup\limits_{0\leq s\leq t}\sum\limits_{|\ze|+|\vth|\leq N\atop{\ze>0}}\|\pa_\ze^\vth g_2(s)\|^2
\leq& C\sum\limits_{|\vth|\leq N}\sup\limits_{0\leq s\leq t}\|\pa^\vth g_2(s)\|^2
+C\sup\limits_{0\leq s\leq t}\sum\limits_{|\ze|+|\vth|\leq N}\|w_l\pa_{\ze}^\vth g_1(s)\|_{L^\infty}^2\notag\\&+C(\al+\al^2)\sum\limits_{|\ze|+|\vth|\leq N}\|w_l\pa_\ze^\vth g_2\|_{L^\infty}^2.
\end{align}
On the other hand, from \eqref{g1k-es2} and \eqref{g2k-es2}, it follows
\begin{align}\label{g1k-es3}
\sup\limits_{0\leq s\leq t}\sum\limits_{|\ze|+|\vth|\leq N\atop{\ze>0}}\|w_l\pa_\ze^{\vth}g_{1}(s)\|_{L^\infty}
\leq&\sum\limits_{|\ze|+|\vth|\leq N}\|w_l\pa_\ze^{\vth}f_0\|_{L^\infty}+
C\al\sup\limits_{0\leq s\leq t}\sum\limits_{|\ze|+|\vth|\leq N\atop{\ze>0}} \|w_l\pa_\ze^{\vth}g_{2}(s)\|_{L^\infty}
\notag\\&+C\al\sup\limits_{0\leq s\leq t}\sum\limits_{|\vth|\leq N}\|w_l\pa^\vth g_{2}(s)\|_{L^\infty},
\end{align}
and
\begin{align}\label{g2k-es3}
\sup\limits_{0\leq s\leq t}\sum\limits_{|\ze|+|\vth|\leq N\atop{\ze>0}}\|w_l\pa_\ze^{\vth}g_{2}(s)\|_{L^\infty}\leq&
C\sup\limits_{0\leq s\leq t}\sum\limits_{|\ze|+|\vth|\leq N\atop{\ze>0}}\|w_l\pa_\ze^{\vth}g_{1}(s)\|_{L^\infty}
+C\sup\limits_{0\leq s\leq t}\sum\limits_{|\ze|+|\vth|\leq N\atop{\ze>0}}\|\pa_\ze^{\vth}g_2(s)\|
\notag\\&+C\sup\limits_{0\leq s\leq t}\sum\limits_{|\vth|\leq N}|w_l\pa^\vth g_{1}(s)\|_{L^\infty}+C\sup\limits_{0\leq s\leq t}\sum\limits_{|\vth|\leq N}\|w_l\pa^\vth g_{2}(s)\|_{L^\infty}.
\end{align}
Now \eqref{g1-g2-spcl}, \eqref{g2-md-es1}, \eqref{g1k-es3} and \eqref{g2k-es3} lead us to
\begin{align}
\sup\limits_{0\leq s\leq t}\sum\limits_{|\ze|+|\vth|\leq N}\|w_l\pa_\ze^{\vth}g_{1}(s)\|_{L^\infty}+\al\sup\limits_{0\leq s\leq t}\sum\limits_{|\ze|+|\vth|\leq N}\|w_l\pa_\ze^{\vth}g_{2}(s)\|_{L^\infty}
\leq& C\sum\limits_{|\ze|+|\vth|\leq N}\|w_l\pa_\ze^\vth f_0\|_{L^\infty}.\notag
\end{align}
Thus, \eqref{ap-es} is valid and this also confirms \eqref{decay}.

Finally, by the similar procedure as that of \cite[Step 4, pp.47]{DL-2020}, one can show that the solution of \eqref{G-ust} and \eqref{G-id} is non-negative, and the details of the proof is omitted for brevity. This ends the proof of Theorem \ref{ge.th}.
\end{proof}

\section{Appendix}\label{pre-sec}

In this section, we provide those estimates that have been used in the previous sections. We will first give the basic estimates on the linearized operator $L$ as well as the nonlinear operators $\Ga$ and $Q$, then present a key estimate for the operator $\CK$ in the case of hard potentials, and in the end derive a lower bound for a matrix exponential.

The following lemma is concerned with the integral operator $K$ given by \eqref{sp.L}, and its proof in case of the hard sphere model $(\ga=1)$ has been given by \cite[Lemma 3, pp.727]{Guo-2010}.
\begin{lemma}\label{Kop}
Let $K$ be defined as \eqref{sp.L}, then it holds that
\begin{align}\notag
Kf(v)=\int_{\R^3}\Fk(v,v_\ast)f(v_\ast)\,dv_\ast
\end{align}
with
\begin{equation*}
|\Fk(v,v_\ast)|\leq C\{|v-v_\ast|^\ga+|v-v_\ast|^{-2+\ga}\}e^{-
\frac{1}{8}|v-v_\ast|^{2}-\frac{1}{8}\frac{\left||v|^{2}-|v_\ast|^{2}\right|^{2}}{|v-v_\ast|^{2}}}. 
\end{equation*}
Moreover, let
\begin{align}
\Fk_w(v,v_\ast)=w_{l}(v)\Fk(v,v_\ast)w_{l}^{-1}(v_\ast)\notag
\end{align}
with  $l\geq0$,
then it also holds that
\begin{equation*}
\int_{\R^3} \Fk_w(v,v_\ast)e^{\frac{\varepsilon|v-v_\ast|^2}{8}}dv_\ast\leq \frac{C}{1+|v|},
\end{equation*}
for $\varepsilon=0$ or any $\varepsilon> 0$ small enough.
\end{lemma}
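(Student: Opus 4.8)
The plan is to read the kernel representation and pointwise bound off \eqref{fk-12}, and then to prove the weighted integral bound by a change of variables followed by spherical coordinates aligned with $v$, after a convexity estimate on the exponent; the argument refines the one for $\ga=1$ in \cite{Guo-2010}. The identity $Kf(v)=\int_{\R^3}\Fk(v,v_\ast)f(v_\ast)\,dv_\ast$ is Grad's classical reduction of \eqref{sp.L}: writing $Q(\mu^{1/2}f,\mu)$ and $Q_{\textrm{gain}}(\mu,\mu^{1/2}f)$ in the $\om$-representation, the loss part of $Q(\mu^{1/2}f,\mu)$ integrates in $\om$ to the kernel part dominated by $\Fk_1$ in \eqref{fk-12}, while the two gain parts combine and, after the Carleman-type change of variables on $\S^2$, give the kernel part dominated by $\Fk_2$ in \eqref{fk-12}; for $\ga=1$ this is \cite[pp.~45--46]{Gla-book}, and for $0\le\ga<1$ the computation is verbatim, only the power $|v-v_\ast|^\ga$ changing, with $-2+\ga\in(-3,\infty)$ still locally integrable in $\R^3$. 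Setting $\Fk=\Fk_2-\Fk_1$ and using the elementary inequality
\begin{align}
\tfrac18|v-v_\ast|^2+\tfrac18\frac{\big||v|^2-|v_\ast|^2\big|^2}{|v-v_\ast|^2}\le\tfrac14\big(|v|^2+|v_\ast|^2\big),\notag
\end{align}
which follows from $|v|^2+|v_\ast|^2=\tfrac12|v+v_\ast|^2+\tfrac12|v-v_\ast|^2$ together with the Cauchy--Schwarz bound $\big||v|^2-|v_\ast|^2\big|^2=\big|(v+v_\ast)\cdot(v-v_\ast)\big|^2\le|v+v_\ast|^2|v-v_\ast|^2$, one dominates the Gaussian of $\Fk_1$ by that of $\Fk_2$ and obtains the stated pointwise bound on $|\Fk|$.

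For the weighted estimate it suffices, by this pointwise bound, to control
\begin{align}
I(v):=\int_{\R^3}\frac{(1+|v|^2)^l}{(1+|v_\ast|^2)^l}\big\{|v-v_\ast|^\ga+|v-v_\ast|^{-2+\ga}\big\}\,e^{-c_0|v-v_\ast|^2-\frac18\frac{||v|^2-|v_\ast|^2|^2}{|v-v_\ast|^2}}\,dv_\ast,\notag
\end{align}
where $c_0:=(1-\varepsilon)/8>0$ after absorbing $e^{\varepsilon|v-v_\ast|^2/8}$ into $e^{-\frac18|v-v_\ast|^2}$ (take $\varepsilon<\tfrac12$, which also covers $\varepsilon=0$). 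For $|v|\le1$ the integrand is uniformly integrable and $I(v)\le C\le C/(1+|v|)$; for $|v|\ge1$ one splits the $v_\ast$-integral into $\{|v_\ast|\le|v|/2\}$ and $\{|v_\ast|>|v|/2\}$. On the first set $|v-v_\ast|\ge|v|/2$, so the bracket is $\le C(1+|v|)$, the weight ratio is $\le(1+|v|^2)^l$, and $e^{-c_0|v-v_\ast|^2}\le e^{-c_0|v|^2/4}$; since that set has volume $\lesssim|v|^3$, this contribution is $\le C(1+|v|)^{2l+4}e^{-c_0|v|^2/4}\le C/(1+|v|)$. On the second set $1+|v|^2\le4(1+|v_\ast|^2)$, so the weight ratio is $\le4^l$; bounding it by this constant, enlarging the (nonnegative) domain to all of $\R^3$ and substituting $u=v-v_\ast$ reduces matters to showing $J(v)\le C/(1+|v|)$, where
\begin{align}
J(v):=\int_{\R^3}\big\{|u|^\ga+|u|^{-2+\ga}\big\}\,e^{-c_0|u|^2-\frac18\frac{|2v\cdot u-|u|^2|^2}{|u|^2}}\,du.\notag
\end{align}

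This last estimate is the only real obstacle. Writing $2v\cdot u-|u|^2=|u|(2|v|\cos\theta-|u|)$ with $\theta$ the angle between $u$ and $v$, one has $|2v\cdot u-|u|^2|^2/|u|^2=(2|v|\cos\theta-|u|)^2$, and a convexity (Young) inequality of the form $(2|v|\cos\theta-|u|)^2\ge\tfrac13(2|v|\cos\theta)^2-\tfrac12|u|^2$ shows that, for $\varepsilon<\tfrac12$, the exponent in $J$ is bounded above by $-c_1|u|^2-c_2|v|^2\cos^2\theta$ with constants $c_1,c_2>0$---here it is essential that $\varepsilon$ be genuinely small, so that the two $|u|^2$-contributions still combine with a negative coefficient. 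Passing to spherical coordinates for $u$ with $v/|v|$ as polar axis, $du=|u|^2\,d|u|\,\sin\theta\,d\theta\,d\psi$, the estimate factorizes as
\begin{align}
J(v)\le C\Big(\int_0^\infty\big(r^{\ga+2}+r^{\ga}\big)e^{-c_1r^2}\,dr\Big)\Big(\int_0^\pi e^{-c_2|v|^2\cos^2\theta}\sin\theta\,d\theta\Big),\notag
\end{align}
the first factor being a finite constant (since $\ga\ge0$ makes $r^{\ga}$ integrable at the origin) and the substitution $t=\cos\theta$ giving $\int_0^\pi e^{-c_2|v|^2\cos^2\theta}\sin\theta\,d\theta=\int_{-1}^1e^{-c_2|v|^2t^2}\,dt\le C/(1+|v|)$. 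This gives $J(v)\le C/(1+|v|)$ and, combining the two sets, the desired weighted bound. The crux is thus the convexity splitting keeping the $|u|^2$ term with the right sign, together with the observation that it is precisely the angular Gaussian $e^{-c_2|v|^2\cos^2\theta}$, integrated against $\sin\theta\,d\theta$, that produces the gain $(1+|v|)^{-1}$; everything else is routine.
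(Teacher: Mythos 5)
Your proof is correct, and it is essentially the standard Grad-type argument that the paper itself does not reproduce but simply defers to \cite[Lemma 3]{Guo-2010} and \cite[pp.~45--46]{Gla-book}: domination of the $\Fk_1$-Gaussian via $\bigl||v|^2-|v_\ast|^2\bigr|^2\le|v+v_\ast|^2|v-v_\ast|^2$, the split according to $|v_\ast|\lessgtr|v|/2$ to control the weight ratio, and the angular integral $\int_{-1}^{1}e^{-c_2|v|^2t^2}\,dt\le C/(1+|v|)$ producing the gain. All the quantitative checkpoints (the Young splitting requiring $c_0>1/16$, hence $\varepsilon<1/2$, and the integrability of $r^{\gamma}$ at the origin for $\gamma\ge0$) are handled correctly.
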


For the velocity weighted derivative estimates on the nonlinear operator $\Ga$, one has
\begin{lemma}\label{Ga}
Let $0\leq \ga\leq 1$ and $\ta\in[0,1]$. For any $p\in[1,+\infty]$ and any $l\geq0$, it holds that
\begin{align}\label{es1.Ga}
\|w_l\nu^{-\ta}\pa_\ze\Ga(f,g)\|_{L_v^p}\leq C\sum\limits_{\ze'+\ze''\leq \ze}
\left\{\|w_l\nu^{1-\ta}\pa_{\ze'}f\|_{L_v^p}\|\pa_{\ze''}g\|_{L_v^p}+\|\pa_{\ze'}f\|_{L_v^p}\|w_l\nu^{1-\ta}\pa_{\ze''}g\|_{L_v^p}\right\}.
\end{align}
\end{lemma}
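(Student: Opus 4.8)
The plan is to reduce everything to the classical collisional kernel estimates for $Q$ together with the Leibniz rule for velocity derivatives. First I would recall that, by definition \eqref{def.Ga}, $\Ga(f,g)=\mu^{-1/2}Q(\sqrt{\mu}f,\sqrt{\mu}g)$. The key point is that the differential operator $\pa_\ze$ acting on $\Ga(f,g)$ can be distributed over the two arguments and over the Maxwellian factors; since $\pa_\ze(\sqrt{\mu})=P_\ze(v)\sqrt{\mu}$ for a polynomial $P_\ze$, each resulting term is still of the form $\mu^{-1/2}Q(\mu^{a}\pa_{\ze'}f,\mu^{b}\pa_{\ze''}g)$ with $a,b>0$ and $\ze'+\ze''\le\ze$, where the extra polynomial weights coming from differentiating $\mu^{1/2}$ are harmless because they are absorbed by the Gaussian. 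So after this bookkeeping step the claim reduces to a pointwise-in-$v$ bilinear estimate for a single term $w_l\nu^{-\ta}Q(\mu^{a}F,\mu^{b}G)$ in $L^p_v$.

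Next I would invoke the standard structure of $Q$ under the hard-potential cutoff kernel \eqref{hard-sp}--\eqref{hd-po}: split $Q=Q_{\mathrm{gain}}-Q_{\mathrm{loss}}$. For the loss term $Q_{\mathrm{loss}}(\mu^a F,\mu^b G)=\mu^b G\cdot\big(\int B(\om,v-v_\ast)\mu^a(v_\ast)F(v_\ast)\,d\om dv_\ast\big)$, the $v_\ast$-integral is bounded by $C\nu(v)\,\|F\|_{L^p_v}$-type quantities (or pointwise by $C(1+|v|)^\ga$ times a weighted sup/integral of $F$), which after multiplication by $w_l\nu^{-\ta}$ yields exactly the right-hand side of \eqref{es1.Ga}, the factor $\nu^{1-\ta}$ appearing naturally since $\nu\sim(1+|v|)^\ga$ and there is one power of $|v-v_\ast|^\ga\le C(1+|v|)^\ga(1+|v_\ast|)^\ga$. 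For the gain term I would use the Carleman-type representation or, more simply, the change of variables $v\mapsto(v',v_\ast')$ together with the elementary inequalities $|v-v_\ast|^\ga\le C(\lag v'\rag\lag v_\ast'\rag)^\ga$ and $\mu^a(v_\ast)\le C\mu^{a/2}(v')\mu^{a/2}(v_\ast')$ (from $|v|^2+|v_\ast|^2=|v'|^2+|v_\ast'|^2$), so that the weight $w_l(v)\le w_l(v')w_l(v_\ast')$ and all exponential factors are controlled; then Minkowski's and Hölder's inequalities in $L^p_v$ close the gain term with the same bound. One treats $p=\infty$ and $p\in[1,\infty)$ uniformly since the argument is really a pointwise bound on the kernel followed by an $L^p$ integration.

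I would organize the write-up as: (i) Leibniz expansion reducing to the single bilinear term; (ii) the loss-term estimate; (iii) the gain-term estimate via the $v'$--$v_\ast'$ substitution and weight-splitting; (iv) reassembling the sum over $\ze'+\ze''\le\ze$. The main obstacle, and the only genuinely delicate point, is the gain term for general $0\le\ga\le1$: one must verify that the singular kernel $|v-v_\ast|^\ga$ (which is actually a \emph{bounded} multiplier for $\ga\ge0$, so in fact not singular here — the cutoff $B_0\le C|\cos\ta|$ removes the angular singularity and $\ga\ge 0$ removes the relative-velocity singularity), together with the $\mu^{1/2}$ mismatch between $\mu^{-1/2}$ outside and $\mu^a$, $\mu^b$ inside, still produces enough Gaussian decay to absorb $w_l$ and the polynomial factors $P_{\ze'}$, $P_{\ze''}$ generated by differentiating the square-root Maxwellians; this forces the choice $a=b=1/2$ in $\Ga$ to be used carefully so that $\mu^{-1/2}(v)\le C\mu^{-1/2}(v')\mu^{-1/2}(v_\ast')\cdot$(Gaussian) does not lose integrability, which is exactly where the $\nu^{1-\ta}$ on the right-hand side (rather than $\nu^{-\ta}$) is needed to carry the one extra power $(1+|v|)^\ga$. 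Apart from this, the proof is a routine adaptation of well-known estimates (cf.\ \cite{Guo-2010, Gla-book}), and I would cite those for the $\ga=1$ baseline and remark that $\ga\in[0,1)$ is only easier.
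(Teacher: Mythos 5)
Your overall strategy coincides with the paper's: a Leibniz expansion reducing to bilinear terms, a routine loss-term bound that produces the factor $\nu^{1-\ta}$, and a gain-term bound via the pre--post-collisional change of variables combined with weight transfer and H\"older/Minkowski in $L^p_v$. Two points in your plan, however, do not survive scrutiny as written. First, you cannot simply ``distribute $\pa_\ze$ over the two arguments and the Maxwellian factors'': in the representation \eqref{def.Ga} the quantities $v'$, $v_\ast'$, the kernel $|v-v_\ast|^\ga$ and the angle $\cos\ta=\om\cdot\frac{v-v_\ast}{|v-v_\ast|}$ all depend on $v$, so direct differentiation also hits $B(\om,v-v_\ast)$, which is not smooth on the diagonal for $0<\ga<1$; this destroys the clean reduction to terms of the form $\mu^{-1/2}Q(\mu^{a}\pa_{\ze'}f,\mu^{b}\pa_{\ze''}g)$. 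The fix --- and what the paper does in \eqref{der-nop} --- is to first translate $v_\ast\to u=v_\ast-v$, after which $v'=v+u_\parallel$, $v_\ast'=v+u_\perp$ and the kernel $|u|^\ga B_0$ is $v$-independent, so $\pa_\ze$ falls only on $\mu^{1/2}(u+v)$, $f(v+u_\perp)$ and $g(v+u_\parallel)$, with $|\pa_{\ze-\ze'}\mu^{1/2}|\le C\mu^{1/4}$.

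Second, the inequality $\mu^a(v_\ast)\le C\mu^{a/2}(v')\mu^{a/2}(v_\ast')$ is false: collision invariance gives $\mu(v)\mu(v_\ast)=\mu(v')\mu(v_\ast')$, hence $\mu(v_\ast)=\mu(v')\mu(v_\ast')/\mu(v)$, which is \emph{large} compared with $\mu^{1/2}(v')\mu^{1/2}(v_\ast')$ when $|v|$ is large (take $v_\ast=0$). Fortunately you do not need it: in the gain term the Maxwellian should be left at $v_\ast$, where it supplies the integrability needed for H\"older in $v_\ast$, and only the polynomial weight has to be transferred, via $(w_l\nu^{1-\ta})(v)\le C\{(w_l\nu^{1-\ta})(v')+(w_l\nu^{1-\ta})(v_\ast')\}$ as in \eqref{tri-ine}, before undoing the change of variables $(v',v_\ast')\to(v,v_\ast)$ in the resulting double integral. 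With these two repairs your argument becomes essentially the paper's proof.
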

\begin{proof}
Note that if $l=0$ and $\ze=0$, \eqref{es1.Ga} was given by \cite[Theorem 1.2.3, pp.15]{UY-lect}. Let us now show that \eqref{es1.Ga} can be generalized to $l\geq0$ and $\ze\geq0$. For this, we first have from
definition \eqref{def.Ga} that
\begin{align}
\pa_{\ze}\Ga(f,g)=&\pa_{\ze}\int_{\R^3}\int_{\S^2}B_0(\cos\ta)|v-v_\ast|^\ga\mu^{1/2}(v_\ast)f(v_\ast')g(v')\,d\om dv_\ast
\notag\\&-\pa_{\ze}\int_{\R^3}\int_{\S^2}B_0(\cos\ta)|v-v_\ast|^\ga\mu^{1/2}(v_\ast)f(v_\ast)g(v)\,d\om dv_\ast\notag\\
=&\pa_{\ze}\int_{\R^3}\int_{\S^2}B_0(\cos\ta)|v-v_\ast|^\ga\mu^{1/2}(v_\ast)f(v_\ast')g(v')\,d\om dv_\ast
\notag\\&-c_0\pa_{\ze} \left[g(v)\int_{\R^3}|v-v_\ast|^\ga\mu^{1/2}(v_\ast)f(v_\ast)\, dv_\ast\right],
\notag
\end{align}
where we have used $\int_{\S^2}B_0(\cos\ta)\,d\omega=c_0$ for a constant $c_0>0$.
Then, by a change of variable $v_\ast-v\rightarrow u$, one has
\begin{align}
\pa_{\ze}\Ga(f,g)=&\underbrace{\sum\limits_{\ze''\leq\ze'\leq\ze}C_{\ze}^{\ze'}C_{\ze'}^{\ze''}\int_{\R^3}\int_{\S^2}B_0(\cos\ta)
|u|^\ga(\pa_{\ze-\ze'}\mu^{1/2})(u+v)(\pa_{\ze'-\ze''}f)(v+u_\perp)(\pa_{\ze''}g)(v+u_\parallel)\,d\om du}_{\Ga_1}
\notag\\&\underbrace{-c_0\sum\limits_{\ze'\leq\ze}C_{\ze}^{\ze'}
\left[(\pa_{\ze'}g)(v)\int_{\R^3}|u|^\ga(\pa_{\ze-\ze'}\mu^{1/2})(v+u)(\pa_{\ze'}f)(v+u)\, du\right]}_{\Ga_2},
\label{der-nop}
\end{align}
where $u_\parallel=(u\cdot \om)\om$ and $u_\perp=u-u_\parallel$. As $|(\pa_{\ze-\ze'}\mu^{1/2})(u+v)|\leq C\mu^{1/4}(u+v)$,
one has by changing variable $u$ back to $v_\ast-v$ that
\begin{align}
|\Ga_1|\leq& C \int_{\R^3}\int_{\S^2}B_0(\cos\ta)
|u|^\ga\mu^{1/4}(u+v)|(\pa_{\ze'-\ze''}f)(v+u_\perp)(\pa_{\ze''}g)(v+u_\parallel)|\,d\om du
\notag\\
=&C \int_{\R^3}\int_{\S^2}B_0(\cos\ta)
|v_\ast-v|^\ga\mu^{1/4}(v_\ast)|(\pa_{\ze'-\ze''}f)(v_\ast')(\pa_{\ze''}g)(v')|\,d\om dv_\ast,\notag
\end{align}
which together with the inequality
\begin{align}
(w_l\nu^{-\ta+1})(v)\leq C\left( (w_l\nu^{-\ta+1})(v')+(w_l\nu^{-\ta+1})(v'_\ast)\right),\label{tri-ine}
\end{align}
implies
\begin{align}
w_l\nu^{-\ta}|\Ga_1|\leq& C \left((w_l\nu^{-\ta+1})(v')+(w_l\nu^{-\ta+1})(v'_\ast)\right)\nu^{-1}(v)
\notag\\&\times
\int_{\R^3}\int_{\S^2}B_0(\cos\ta)
|v_\ast-v|^\ga\mu^{1/4}(v_\ast)|(\pa_{\ze'-\ze''}f)(v_\ast')(\pa_{\ze''}g)(v')|\,d\om dv_\ast\notag\\
\leq&C\{\|w_l\nu^{-\ta+1}\pa_{\ze'-\ze''}f\|_{L^\infty}\|\pa_{\ze''}g\|_{L^\infty}
+\|\pa_{\ze'-\ze''}f\|_{L^\infty}\|w_l\nu^{-\ta+1}\pa_{\ze''}g\|_{L^\infty}\}\notag\\&\times
\nu^{-1}(v)\int_{\R^3}\int_{\S^2}B_0(\cos\ta)
|v_\ast-v|^\ga\mu^{1/4}(v_\ast)d\om dv_\ast
\notag\\
\leq&C\{\|w_l\nu^{-\ta+1}\pa_{\ze'-\ze''}f\|_{L^\infty}\|\pa_{\ze''}g\|_{L^\infty}
+\|\pa_{\ze'-\ze''}f\|_{L^\infty}\|w_l\nu^{-\ta+1}\pa_{\ze''}g\|_{L^\infty}\}.\notag
\end{align}
This confirms the $L^\infty$ estimate for $\Ga_1.$
If $p\in[1,\infty)$, by H\"{o}lder's inequality, we get
\begin{align}
w_l\nu^{-\ta}|\Ga_1|
\leq&C_\ze w_l\nu^{-\ta}
\left(\int_{\R^3}\int_{\S^2}B_0(\cos\ta)
|v_\ast-v|^{p'\ga}{\mu^{p'/4}(v_\ast)\,d\om dv_\ast}\right)^{\frac{1}{p'}}
\notag\\
&\times
\left(\int_{\R^3}\int_{\S^2}{B_0(\cos\ta)\left|(\pa_{\ze'-\ze''}f)(v_\ast')(\pa_{\ze''}g)(v')\right|^{p}\,d\om dv_\ast}\right)^{\frac{1}{p}}
\notag\\
\leq&C_\ze w_l\nu^{1-\ta}
\left(\int_{\R^3}|(\pa_{\ze'-\ze''}f)(v_\ast')(\pa_{\ze''}g)(v')|^{p}\,dv_\ast\right)^{\frac{1}{p}},\notag
\end{align}
where $\frac{1}{p}+\frac{1}{p'}=1.$
Therefore, using \eqref{tri-ine} again and by a change of variable $(v',v_\ast')\rightarrow(v,v_\ast)$, one has
\begin{align}
\|w_l\nu^{-\ta}\Ga_1\|^p_{L^p}\leq& \int_{\R^3}w_l^p\nu^{-p\ta+p}\int_{\R^3}
|(\pa_{\ze'-\ze''}f)(v_\ast')(\pa_{\ze''}g)(v')|^{p}\,dv_\ast dv
\notag\\
=& \int_{\R^3}\int_{\R^3}(w_l^p\nu^{-p\ta+p})(v'_\ast)
|(\pa_{\ze'-\ze''}f)(v_\ast)(\pa_{\ze''}g)(v)|^{p}\,dv_\ast dv\notag\\
\leq& \int_{\R^3}\int_{\R^3}[(w_l^p\nu^{-p\ta+p})(v)+(w_l^p\nu^{-p\ta+p})(v_\ast)]
|(\pa_{\ze'-\ze''}f)(v_\ast)(\pa_{\ze''}g)(v)|^{p}\,dv_\ast dv
\notag\\
\leq&C\left\{ \|w_l\nu^{-\ta}\pa_{\ze'-\ze''}f\|_{L^p_v}^p\|\pa_{\ze''}g\|_{L^p_v}^p
+\|\pa_{\ze'-\ze''}f\|_{L^p_v}^p\|w_l\nu^{-\ta}\pa_{\ze''}g\|_{L^p_v}^p\right\}.\notag
\end{align}
The corresponding estimates for $\Ga_2$ are similar and easier, so we omit them for brevity. This completes the proof of Lemma \ref{Ga}.
\end{proof}

The following lemma is concerned with  coercivity estimates for the linear collision operator $L$.

\begin{lemma}\label{es-L}
Let $0\leq \ga\leq1$, then there is a constant $\de_0>0$ such that
\begin{align}\label{bL}
\lag Lf,f\rag=\lag L\FP_1f,\FP_1f\rag\geq\de_0\|\FP_1f\|_\nu^2,
\end{align}
where $\|\cdot\|_\nu=\|\nu^{\frac{1}{2}}\cdot\|.$
Moreover, there are constants $\de_1>0$ and $C>0$ such that for $|\ze|>0$
\begin{align}\label{wd-L}
\lag \pa_{\ze} L f,\pa_\ze  f\rag\geq\de_1\|\pa_\ze f\|_\nu^2-C\|f\|^2.
\end{align}
\end{lemma}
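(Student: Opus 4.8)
The plan is to treat the two assertions of Lemma~\ref{es-L} separately, deriving the second from the first by a commutator argument. For \eqref{bL}, the identity $\lag Lf,f\rag=\lag L\FP_1f,\FP_1f\rag$ will follow at once from the self-adjointness of $L$ and $L\FP_0f=0$, since $Lf=L\FP_1f$ and $\lag L\FP_1f,\FP_0f\rag=\lag\FP_1f,L\FP_0f\rag=0$. For the coercivity I would set $g=\FP_1f\in(\ker L)^\perp$ and use $L=\nu-K$, so that $\lag Lg,g\rag=\|g\|_\nu^2-\lag Kg,g\rag$. Taking $l=0$ in Lemma~\ref{Kop} gives $\sup_v\int_{\R^3}|\Fk(v,v_\ast)|\,dv_\ast\le C/(1+|v|)$; splitting the kernel $\Fk$ of $K$ into the part supported on $\{|v|\le R\}\cap\{|v_\ast|\le R\}$ and its complement, the Schur test then yields $|\lag Kg,g\rag|\le\varepsilon(R)\|g\|_\nu^2+C_R\|\mathbf 1_{|v|\le R}g\|^2$ with $\varepsilon(R)\to0$ as $R\to\infty$. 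Choosing $R$ with $\varepsilon(R)\le 1/2$ produces the G\aa rding-type inequality $\lag Lg,g\rag\ge\frac12\|g\|_\nu^2-C_R\|\mathbf 1_{|v|\le R}g\|^2$, which I would upgrade to \eqref{bL} by the standard contradiction/weak-compactness argument: assuming no $\de_0$ works, take $g_n\in(\ker L)^\perp$ with $\|g_n\|_\nu=1$ and $\lag Lg_n,g_n\rag\to0$; the compactness of $\mathbf 1_{|v|\le R}K$ on $L^2$ extracts a $\|\cdot\|_\nu$-limit $g\ne0$ with $g\in(\ker L)^\perp$ and $\lag Lg,g\rag=0$, hence $g\in\ker L$, a contradiction. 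This step is classical for angular-cutoff hard potentials and may be quoted.

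For \eqref{wd-L} with $|\ze|>0$ the plan is to write $\lag\pa_\ze Lf,\pa_\ze f\rag=\lag L\pa_\ze f,\pa_\ze f\rag+\lag[\pa_\ze,L]f,\pa_\ze f\rag$. The first term I would handle via \eqref{bL}: it is $\ge\de_0\|\FP_1\pa_\ze f\|_\nu^2$, and here the key observation is that the coefficients $a,\Fb,c$ of $\FP_0\pa_\ze f$ are the moments $\lag\pa_\ze f,\phi_i\rag$ of $\pa_\ze f$ against the basis $\{\phi_i\}$ of $\ker L$, which by integration by parts equal $\lag f,\pa_\ze\phi_i\rag$ — moments of $f$ against fixed Gaussian-decaying functions — so that $|[a,\Fb,c]|\le C\|f\|$ and consequently $\|\FP_0\pa_\ze f\|_\nu\le C\|f\|$. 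Hence $\|\FP_1\pa_\ze f\|_\nu^2\ge\frac12\|\pa_\ze f\|_\nu^2-C\|f\|^2$, which gives $\lag L\pa_\ze f,\pa_\ze f\rag\ge\frac{\de_0}{2}\|\pa_\ze f\|_\nu^2-C\|f\|^2$.

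It then remains to absorb the commutator $\lag[\pa_\ze,L]f,\pa_\ze f\rag$, where $[\pa_\ze,L]f=\sum_{0<\ze'\le\ze}C_\ze^{\ze'}(\pa_{\ze'}\nu)\pa_{\ze-\ze'}f-\big(\pa_\ze(Kf)-K\pa_\ze f\big)$. Since $0\le\ga\le1$ we have $|\pa_{\ze'}\nu|\le C$ for $|\ze'|\ge1$, so integrating by parts in $v$ to shift one derivative off $\pa_\ze f$ and then applying Cauchy--Schwarz will bound the first sum by $\eta\|\pa_\ze f\|_\nu^2$ plus $C_\eta$ times $L^2$-norms of velocity derivatives of $f$ of order strictly below $|\ze|$; these lower-order norms I would reduce, via Gagliardo--Nirenberg interpolation in $v$ together with $\nu\ge\nu_0>0$, to $\eta\|\pa_\ze f\|_\nu^2+C_\eta\|f\|^2$. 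For the $K$-commutator I would differentiate the kernel representation as in \eqref{der-nop}; each resulting kernel is again of Grad type with the Gaussian factor of \eqref{fk-12} intact, so Lemma~\ref{Kop} still applies, and the same splitting of the kernel used above bounds this contribution by $(\eta+\varepsilon(R))\|\pa_\ze f\|_\nu^2+C_{R,\eta}\|\mathbf 1_{|v|\le R}\pa_\ze f\|^2+C\|f\|^2$, with the compact piece again absorbed by interpolation. Choosing $\eta$, and then $R$, small enough would complete the proof of \eqref{wd-L}.

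The hard part will be the bookkeeping in the commutator term: one must show that every term pairing the top derivative $\pa_\ze f$ against a lower-order object can be reorganised, by integration by parts and $v$-interpolation, so that only $\eta\|\pa_\ze f\|_\nu^2$ and $C\|f\|^2$ survive, and keeping track of the $\nu$-weights (so that the $\eta\|\pa_\ze f\|_\nu^2$ can actually be absorbed on the left) is the delicate point. The compactness argument behind \eqref{bL} is the other place needing care, but it is entirely standard in the cutoff hard-potential setting.
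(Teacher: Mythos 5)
Your proposal is correct and follows essentially the same route as the paper: the paper simply cites Guo (2006) for \eqref{bL} (as you note one may), and for \eqref{wd-L} it uses exactly your commutator decomposition, controls $\FP_0\pa_\ze f$ by $\|f\|$ via the same integration-by-parts-onto-the-Gaussian-basis observation, bounds $|\pa_{\ze'}\nu|\le C$ and uses Cauchy--Schwarz plus Sobolev interpolation for the $\nu$-commutator, and differentiates the Grad kernel (preserving the Gaussian factor, as in \eqref{dK-ubd}) to absorb the $K$-commutator. The only cosmetic differences are that you offer to reprove \eqref{bL} by the classical G\aa rding-plus-compactness argument and insert an extra integration by parts in the $\nu$-commutator term, neither of which changes the substance.
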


\begin{proof}
Note that \eqref{bL} has been already proved in \cite[Lemma 3.2, pp.638]{Guo-2006}. As for \eqref{wd-L}, from \cite[Lemma 3.3, pp.639]{Guo-2006}, we have 
\begin{align}
\lag \pa_{\ze} L f,\pa_\ze  f\rag\geq\de_1\|\pa_\ze f\|_\nu^2-C\|f\|_\nu^2.\notag
\end{align}
We now prove that this can be relaxed to \eqref{wd-L}, which is indeed true for Maxwell molecular case because $\nu\sim c_0$ for some $c_0>0$ in this situation. For $0<\ga\leq1$, we write
\begin{align}\label{wd-L-p1}
\lag \pa_{\ze} L f,\pa_\ze  f\rag=&\lag \pa_{\ze} (\nu f),\pa_\ze  f\rag-\lag \pa_{\ze} (K f),\pa_\ze  f\rag
\notag\\=&\lag L\pa_{\ze} f,\pa_\ze  f\rag+\sum\limits_{0<\ze'\leq\ze}C_\ze^{\ze'}\lag \pa_{\ze'} \nu \pa_{\ze-\ze'}f,\pa_\ze  f\rag
-\sum\limits_{0<\ze'\leq\ze}C_\ze^{\ze'}\lag (\pa_{\ze'} K) \pa_{\ze-\ze'}f,\pa_\ze  f\rag.
\end{align}
From \eqref{bL}, one has
\begin{align}\label{L-coes}
\lag L\pa_{\ze} f,\pa_\ze  f\rag\geq\de_0\|\FP_1\pa_\ze  f\|^2_\nu\geq\de_0\|\pa_\ze  f\|^2_\nu-\de_0\|\FP_0\pa_\ze  f\|^2_\nu
\geq\de_0\|\pa_\ze  f\|^2_\nu-C\|f\|^2.
\end{align}
By definition \eqref{sp.L}, it follows
\begin{align}
{\bf1}_{\ze'>0}|\pa_{\ze'} \nu|\leq C(1+|v|)^{\ga-|\ze'|}\leq C.\notag
\end{align}
Thus, one has by Cauchy-Schwarz's inequality with $\eta>0$ and Sobolev's interpolation inequality that
\begin{align}
|\lag \pa_{\ze'} \nu \pa_{\ze-\ze'}f,\pa_\ze  f\rag|\leq \eta\|\pa_\ze  f\|^2+C_\eta\|f\|^2.\label{nu-es}
\end{align}
Next, in view of \eqref{fk-12}, we have by a change of variable $v_\ast-v\rightarrow u$
\begin{align}
(\pa_{\ze'} K) \pa_{\ze-\ze'}f=&\tilde{c}_1\sum\limits_{0\leq\ze'\leq\ze}C_\ze^{\ze'}\int_{\R^3\times\om^2}B_0(\cos\ta)|u|^\ga \pa_{\ze'}\{e^{-\frac{|u+v|^2+|v|^2}{4}}\}\pa_{\ze-\ze'}f(v+u)du\notag\\
&-\tilde{c}_2\sum\limits_{0\leq\ze'\leq\ze}C_\ze^{\ze'}\int_{\R^3\times\om^2}B_0(\cos\ta)|u|^\ga \pa_{\ze'}\{e^{-
\frac{1}{8}|u|^{2}-\frac{1}{8}\frac{\left|2v\cdot u+|u|^2\right|^{2}}{|u|^{2}}}\}\pa_{\ze-\ze'}f(v+u)du.\notag
\end{align}
Furthermore, direct computations give
\begin{align}
\pa_{\ze'}\{e^{-\frac{|u+v|^2+|v|^2}{4}}\}\leq C(\ze')e^{-\frac{|u+v|^2+|v|^2}{8}},\notag
\end{align}
and
\begin{align}
\pa_{\ze'}\{e^{-
\frac{1}{8}|u|^{2}-\frac{1}{8}\frac{\left|2v\cdot u+|u|^2\right|^{2}}{|u|^{2}}}\}
\leq C(\ze')e^{-
\frac{1}{16}|u|^{2}-\frac{1}{16}\frac{\left|2v\cdot u+|u|^2\right|^{2}}{|u|^{2}}},\notag
\end{align}
which further implies
\begin{align}
|(\pa_{\ze'} K) \pa_{\ze-\ze'}f|\leq C(\ze)\int_{\R^3}\bar{\Fk}(v,v_\ast)|\pa_{\ze-\ze'}f(v_\ast)|d v_\ast\notag
\end{align}
with
\begin{equation}\label{dK-ubd}
\bar{\Fk}(v,v_\ast)\leq C\{|v-v_\ast|^\ga+|v-v_\ast|^{-2+\ga}\}e^{-
\frac{1}{16}|v-v_\ast|^{2}-\frac{1}{16}\frac{\left||v|^{2}-|v_\ast|^{2}\right|^{2}}{|v-v_\ast|^{2}}}. 
\end{equation}
In particular,
\begin{align}
\int_{\R^3}\bar{\Fk}(v,v_\ast)dv\leq \frac{C}{1+|v_\ast|},\quad \int_{\R^3}\bar{\Fk}(v,v_\ast)dv_\ast\leq \frac{C}{1+|v|}.\notag
\end{align}
Therefore, by Cauchy-Schwarz's inequality, {Fubini's theorem} and Sobolev's interpolation inequality, we obtain
\begin{align}
|\lag (\pa_{\ze'} K) \pa_{\ze-\ze'}f,\pa_\ze f\rag|\leq& \eta\|\pa_\ze f\|^2
+C_\eta\int_{\R^3}\left(\int_{\R^3}\bar{\Fk}(v,v_\ast)|\pa_{\ze-\ze'}f(v_\ast)|d v_\ast\right)^2dv\notag\\
\leq& \eta\|\pa_\ze f\|^2
+C_\eta\int_{\R^3}\int_{\R^3}\bar{\Fk}(v,v_\ast)d v_\ast \int_{\R^3}\bar{\Fk}(v,v_\ast)|\pa_{\ze-\ze'}f(v_\ast)|^2d v_\ast dv
\notag\\
\leq& \eta\|\pa_\ze f\|^2
+C_\eta\int_{\R^3}\int_{\R^3}\bar{\Fk}(v,v_\ast)dv|\pa_{\ze-\ze'}f(v_\ast)|^2d v_\ast
\notag\\
\leq& 2\eta\|\pa_\ze f\|^2+C_\eta\|f\|^2.\label{fk-es}
\end{align}
Finally, plugging \eqref{L-coes}, \eqref{nu-es} and \eqref{fk-es} into \eqref{wd-L-p1} gives \eqref{wd-L}.
This ends the proof of Lemma \ref{es-L}.

\end{proof}
\begin{remark}
From \eqref{dK-ubd}, one can justify that $\pa_{\ze'} K$ is a compact operator from $H^{|\ze|}$ to $H^{|\ze|}$, which directly implies \eqref{fk-es}, cf. \cite[Lemma 2.2, pp.1109]{Guo-vpb}.
\end{remark}

Next, the following lemma which was proved in \cite[Proposition 3.1, pp.13]{DL-2020} gives the $L^\infty$ estimates of the solutions in the case of Maxwell molecule model.
\begin{lemma}\label{CK}
Let $\ga=0$ and $\CK$ be given by \eqref{sp.cL}, then for any nonnegative integer $|\ze|\geq 0$, there is $C>0$ such that for any arbitrarily large $l>0$, there is $M=M(l)>0$ such that it holds that
\begin{align}\label{CK1}
\sup_{|v|\geq M} w_{l}|\pa_{\ze}(\CK f)|\leq \frac{C}{l} \sum\limits_{0\leq \ze'\leq \ze}\|w_{l}\pa_{\ze'}f\|_{L^\infty}.
\end{align}
In particular, one can choose $M=l^2$.
\end{lemma}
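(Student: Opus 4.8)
The plan is to realize $\CK$ as an integral operator, strip off the velocity derivatives, and reduce everything to a single scalar kernel bound in which the weight $w_l$ both restores integrability and produces the gain of order $1/l$.

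By \eqref{sp.cL} one has $\CK f=\sqrt{\mu}\,K(f/\sqrt{\mu})$, so the kernel estimates \eqref{fk-12} for $K$ give $\CK f(v)=\int_{\R^3}\Fk_{\CK}(v,v_\ast)\,f(v_\ast)\,dv_\ast$ with $\Fk_{\CK}(v,v_\ast)=\frac{\sqrt{\mu(v)}}{\sqrt{\mu(v_\ast)}}\,\Fk(v,v_\ast)$ and $|\Fk|\le\Fk_1+\Fk_2$. Applying $\pa_\ze$ and distributing it over the collision integral as in \eqref{der-nop}, every derivative lands either on a Maxwellian factor or on $f$; those on the Maxwellian (and, in the hard-potential range, on $|v-v_\ast|^\ga$ or on the angular factor $B_0(\cos\ta)$, the latter kept harmless by $B_0(\cos\ta)\le C|\cos\ta|$ from \eqref{hd-po}) only replace $\mu$ by a polynomial times a Maxwellian and preserve the kernel structure. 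Hence it suffices to treat $\ze=0$ for each such operator and prove
\[
\sup_{|v|\ge M}\ \int_{\R^3}\frac{w_l(v)}{w_l(v_\ast)}\,\bigl|\Fk_{\CK}(v,v_\ast)\bigr|\,dv_\ast\ \le\ \frac{C}{l},\qquad M=M(l):=l^2,
\]
after which summation over $\ze'\le\ze$ recovers the stated bound \eqref{CK1}.

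A direct computation combining the factor $\sqrt{\mu(v)}/\sqrt{\mu(v_\ast)}=e^{(|v_\ast|^2-|v|^2)/4}$ with the exponentials of \eqref{fk-12} (for $\Fk_2$ this is a completion of the square in $|v|^2-|v_\ast|^2$) gives
\[
\frac{\sqrt{\mu(v)}}{\sqrt{\mu(v_\ast)}}\Fk_1\le C|v-v_\ast|^{\ga}e^{-\frac12|v|^2},\qquad
\frac{\sqrt{\mu(v)}}{\sqrt{\mu(v_\ast)}}\Fk_2\le C|v-v_\ast|^{-2+\ga}\exp\!\Bigl(-\tfrac{(v\cdot(v-v_\ast))^2}{2|v-v_\ast|^2}\Bigr),
\]
so the first piece is globally controlled by $e^{-|v|^2/2}$, while the second has Gaussian decay \emph{only} along the direction of $v$. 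I would then split the $v_\ast$-integral into $\{|v_\ast|\le|v|/2\}$ and $\{|v_\ast|>|v|/2\}$. On the first set, $|v-v_\ast|\ge|v|/2$ and $|v\cdot(v-v_\ast)|\ge\tfrac12|v|^2$, so $|\Fk_{\CK}|\le C(1+|v-v_\ast|)^{\ga}e^{-c|v|^2}$; since $|v|\ge M=l^2$ the Gaussian $e^{-c|v|^2}$ dominates the polynomial $(1+|v|^2)^l$, and this region (together with the whole $\Fk_1$-contribution) is bounded by any fixed negative power of $l$. On the second set I would pass to cylindrical coordinates $v_\ast=v+t\widehat v+u_\perp$, $u_\perp\perp v$: the factor $\exp\!\bigl(-\frac{|v|^2t^2}{2(t^2+|u_\perp|^2)}\bigr)$ confines $t$, which makes the $t$-integral of $|v-v_\ast|^{-2+\ga}$ finite (this is where the borderline singularity $|v-v_\ast|^{-2}$ of the $\ga=0$ case is absorbed), while the transverse integral is governed by $w_l^{-1}(v_\ast)\le(1+|v|^2+|u_\perp|^2)^{-l}$ together with the elementary identity $\int_{\R^2}\frac{(1+|v|^2)^l}{(1+|v|^2+|u_\perp|^2)^l}\,du_\perp=\frac{\pi(1+|v|^2)}{l-1}$, which is the source of the $1/l$ once the remaining polynomial factors are tracked. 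Adding the two contributions gives the displayed bound, and $M(l)=l^2$ (which $\to\infty$) is the choice that lets the Maxwellian overwhelm the weight in the complementary regime.

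The main obstacle is this last estimate on $\{|v_\ast|>|v|/2\}$: after the $\sqrt\mu$-conjugation the kernel of $\CK$ has genuinely no decay transverse to $v$, so both integrability and the quantitative $1/l$ improvement must be squeezed out of the single polynomial weight $w_l^{-1}(v_\ast)$, while the near-diagonal singularity $|v-v_\ast|^{-2+\ga}$ (borderline non-integrable for $\ga=0$) is controlled only through the one-dimensional Gaussian in the $v$-direction. In the hard-potential case the same delicacy reappears when $\pa_\ze$ acts on the angular part of the collision kernel, where the hypothesis $B_0(\cos\ta)\le C|\cos\ta|$ is precisely what prevents differentiation from degrading this singularity, and it is again the choice $M(l)=l^{2}$ that makes $e^{-c|v|^2}$ beat $(1+|v|^2)^l$ in the large-velocity regime.
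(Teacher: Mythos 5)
The paper does not actually prove Lemma \ref{CK} here --- it is quoted from \cite[Proposition 3.1, pp.13]{DL-2020} --- so your argument has to stand on its own. Most of it does: the reduction to a scalar kernel bound, the conjugation identity
\begin{align*}
\frac{\sqrt{\mu(v)}}{\sqrt{\mu(v_\ast)}}\,\Fk_2(v,v_\ast)\le C\,|v-v_\ast|^{-2}\exp\Bigl(-\tfrac{(v\cdot(v-v_\ast))^2}{2|v-v_\ast|^2}\Bigr),
\end{align*}
the disposal of the region $\{|v_\ast|\le |v|/2\}$ and of the $\Fk_1$-part by $(1+|v|^2)^l e^{-c|v|^2}\ll 1$ for $|v|\ge l^2$, and the treatment of derivatives via \eqref{der-nop} are all correct.

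The gap is in the final, decisive step, where you claim the factor $1/l$ comes from $\int_{\R^2}w_l(v)w_l^{-1}(v+u_\perp)\,du_\perp=\pi(1+|v|^2)/(l-1)$. That identity is true but is not the integral you actually face. Writing $v_\ast=v+t\hat v+u_\perp$ and $\rho=|u_\perp|$, the longitudinal Gaussian confines $t$ to $|t|\lesssim \rho/|v|$, so the $t$-integration contributes a factor $\sim\rho/|v|$, while the singularity $|v-v_\ast|^{-2}\sim\rho^{-2}$ cancels one power of $\rho$ against the area element $\rho\,d\rho$. What survives is the \emph{one}-dimensional integral
\begin{align*}
\frac{C}{|v|}\int_0^\infty\Bigl(\frac{1+|v|^2}{1+|v|^2+\rho^2}\Bigr)^{l}d\rho
=\frac{C\sqrt{1+|v|^2}}{|v|}\int_0^\infty(1+t^2)^{-l}\,dt\sim\frac{C}{\sqrt{l}},
\end{align*}
not $C/l$. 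This is not lossy bookkeeping: on the set $\{1\le\rho\le|v|/\sqrt{l},\ |t|\le\rho/|v|\}$ both the weight ratio and the Gaussian are bounded below, and the same count gives a matching lower bound $c/\sqrt{l}$ for the model kernel $|v-v_\ast|^{-2}e^{-(v\cdot(v-v_\ast))^2/(2|v-v_\ast|^2)}$. Hence, starting from \eqref{fk-12} --- which the paper itself warns ``may not be optimal'' --- your scheme can only deliver $\sup_{|v|\ge l^2}w_l|\pa_\ze(\CK f)|\le C l^{-1/2}\sum_{\ze'\le\ze}\|w_l\pa_{\ze'}f\|_{L^\infty}$. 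To obtain the stated $C/l$ one needs a genuinely sharper kernel estimate for the Maxwell-molecule $\CK$ than \eqref{fk-12}, or a different mechanism altogether. (Any bound tending to $0$ as $l\to\infty$ would suffice for the uses of this lemma in Lemma \ref{lifpri}, but it would not prove \eqref{CK1} as stated.)
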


In the case of $0<\ga\leq1$, the following lemma which can be found in
\cite[Proposition 3.1, pp.397]{AEP-87} enables us to gain the smallness property of $\CK$ at large velocity.
\begin{lemma}\label{g-ck}
Let $0\leq\ga\leq1$ and $l>4$, then there exists a function $\varsigma(l)$ which satisfies $\varsigma(l)\rightarrow0$ as $l\rightarrow+\infty$
such that
\begin{align}
w_l\{|Q_{\rm{loss}}(f,g)|&+|Q_{\rm{gain}}(f,g)|+|Q_{\rm{gain}}(g,f)|\}\notag\\
\leq& \|w_lf\|_{L^\infty}\{C(l)\|w_{l+\ga/2}g\|_{L^\infty}+\varsigma(l)\|w_{3}g\|_{L^\infty}(1+|v|)^\ga\},\label{Q-lf2}
\end{align}
where $Q_{\textrm{loss}}$ denotes the negative part of $Q$ in \eqref{Q-op}.
\end{lemma}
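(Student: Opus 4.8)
The plan is to treat the three pieces $Q_{\textrm{loss}}(f,g)$, $Q_{\textrm{gain}}(f,g)$ and $Q_{\textrm{gain}}(g,f)$ separately, the loss term being elementary and the two gain terms requiring the collisional change of variables. The one structural ingredient used throughout is the energy identity $|v'|^2+|v_\ast'|^2=|v|^2+|v_\ast|^2$, which gives $|v|^2\le |v'|^2+|v_\ast'|^2$ and hence the weight-transfer inequality $w_l(v)\le C_l\big(w_l(v')+w_l(v_\ast')\big)$; this is what lets the polynomial weight $w_l(v)$ on the left be redistributed onto the post-collisional arguments.

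\textbf{Step 1 (loss term).} Since $Q_{\textrm{loss}}(f,g)(v)=g(v)\int_{\R^3}\int_{\S^2}|v-v_\ast|^\ga B_0(\cos\ta)|f(v_\ast)|\,d\om dv_\ast$, bounding $|f(v_\ast)|\le w_l^{-1}(v_\ast)\|w_lf\|_{L^\infty}$ and using $\int_{\R^3}|v-v_\ast|^\ga w_l^{-1}(v_\ast)\,dv_\ast\le C(1+|v|)^\ga$ (valid once $2l-\ga>3$, in particular for $l>4$) gives $w_l(v)|Q_{\textrm{loss}}(f,g)(v)|\le C\|w_lf\|_{L^\infty}(1+|v|)^\ga w_l(v)|g(v)|$. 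Since $(1+|v|)^\ga(1+|v|^2)^{-\ga/2}\le C$, the right-hand side is $\le C(l)\|w_lf\|_{L^\infty}\|w_{l+\ga/2}g\|_{L^\infty}$, which is of the required form.

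\textbf{Step 2 (gain terms).} For $Q_{\textrm{gain}}(f,g)(v)=\int_{\R^3}\int_{\S^2}|v-v_\ast|^\ga B_0(\cos\ta)f(v_\ast')g(v')\,d\om dv_\ast$ I would apply the weight transfer and split into the branch carrying $w_l(v_\ast')$ and the branch carrying $w_l(v')$. On the first branch $w_l(v_\ast')|f(v_\ast')|\le\|w_lf\|_{L^\infty}$, and the residual kernel $\int\int|v-v_\ast|^\ga B_0|g(v')|\,d\om dv_\ast$ is handled by the Carleman (Grad) representation of the gain operator: integrating out the $v_\ast'$–variable over the hyperplane through $v$ orthogonal to $v-v'$ leaves a convolution $\int_{\R^3}|v-u|^{-2+\ga}|g(u)|\,du$ with a locally integrable kernel (here $2-\ga\in[1,2]$), which for $l$ large is bounded by $C(l)\|w_{l+\ga/2}g\|_{L^\infty}$ uniformly in $v$. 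The branch carrying $w_l(v')$ is the delicate one: I would split the $(\om,v_\ast)$–integration according to whether $|v_\ast'|$ is small or large relative to $(1+|v|)$ (equivalently, according to how close the collision is to grazing). Where $|v_\ast'|\gtrsim(1+|v|)$, the factor $w_l^{-1}(v_\ast')$ coming from $|f(v_\ast')|\le w_l^{-1}(v_\ast')\|w_lf\|_{L^\infty}$ overwhelms $w_l(v')$ for $l$ large, so that only the weak weight $w_3$ need be kept on $g$; a direct estimate of the collision measure on that set produces exactly a term $\varsigma(l)\|w_3g\|_{L^\infty}(1+|v|)^\ga$ with $\varsigma(l)\to0$, the crossover radius delimiting this region moving to infinity with $l$. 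The complementary piece again reduces, via the collisional change of variables, to the convolution estimate above and goes into $C(l)\|w_{l+\ga/2}g\|_{L^\infty}$. The term $Q_{\textrm{gain}}(g,f)$ is treated identically after interchanging the roles of the two collisional arguments.

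\textbf{Main obstacle.} The crux is Step 2: making the collisional change of variables precise under the cutoff $B_0(\cos\ta)\le C|\cos\ta|$, and, above all, choosing the near-grazing / large-velocity splitting so that the bad region contributes a genuinely $l$-vanishing coefficient $\varsigma(l)$ rather than merely a bounded one. This forces careful bookkeeping of the competing powers of $(1+|v|)$ coming from $w_l(v')$, from $w_l^{-1}(v_\ast')$ and from $|v-v_\ast|^\ga$. This is precisely the analysis of \cite[Proposition 3.1]{AEP-87}, whose argument I would follow.
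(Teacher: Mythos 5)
The paper offers no proof of this lemma: it is quoted directly from Arkeryd--Esposito--Pulvirenti \cite[Proposition 3.1]{AEP-87}, and the only extra information the authors later extract from that reference is the explicit rate $\varsigma(l)=1/l$ used in the proof of Lemma \ref{g-ck-lem}. Your proposal is therefore consistent with the paper's treatment and goes beyond it in one respect: Step 1 is a complete, correct proof of the loss-term bound (the integral $\int_{\R^3}|v-v_\ast|^{\ga}w_l^{-1}(v_\ast)\,dv_\ast\leq C(1+|v|)^{\ga}$ indeed requires only $2l-\ga>3$, and converting $(1+|v|)^{\ga}w_l(v)|g(v)|$ into $C\|w_{l+\ga/2}g\|_{L^\infty}$ is exactly right). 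Step 2 also identifies the correct structural ingredients of the AEP argument: the energy identity for weight transfer, the decomposition $v'=v+u_\parallel$, $v_\ast'=v+u_\perp$ so that $f(v_\ast')$ supplies integrability in the two-dimensional variable and $g(v')$ in the one-dimensional one, and a dichotomy producing the two terms on the right of \eqref{Q-lf2}. The one point where your heuristic does not hold up is the claimed origin of the smallness: on the set $|v_\ast'|\gtrsim(1+|v|)$ the ratio $w_l(v)/w_l(v_\ast')$ is merely bounded by a constant of the form $c^{-2l}$ --- it does not tend to zero as $l\to\infty$, and actually grows with $l$ when the implicit constant $c$ is below one --- so ``$w_l^{-1}(v_\ast')$ overwhelms $w_l(v')$'' cannot by itself yield $\varsigma(l)\to0$. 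In \cite{AEP-87} the factor $1/l$ comes instead from integrating the high inverse power of the weight over the plane orthogonal to $\om$, an integral of the type $\int_{\R^2}(1+|y|^2)^{-l}\,dy\sim C/l$; that is, from the vanishing total mass of $w_l^{-1}$ in the transverse variable, not from a pointwise comparison of weights. Since you explicitly defer to that reference for the execution, this is a flaw in the heuristic rather than in the proposed route, but it is the heart of the lemma and would have to be done correctly in a self-contained write-up.
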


The following result is a direct consequence of Lemma \ref{g-ck}.

\begin{lemma}\label{g-ck-lem}
Let $0<\ga\leq1$, then there is a constant $C>0$ such that for any arbitrarily large $l>0$, there are sufficiently large $M=M(l)>0$ and suitably small $\varsigma=\varsigma(l)>0$ such that it holds that
\begin{align}\label{CK2}
\sup_{|v|\geq M}\nu^{-1} w_{l}|\CK f|\leq C\{(1+M)^{-\ga/2}+\varsigma\}\|w_{l}f\|_{L^\infty}.
\end{align}
\end{lemma}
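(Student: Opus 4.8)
The plan is to deduce \eqref{CK2} directly from Lemma~\ref{g-ck} by recalling the structure of $\CK$ and then dividing by the collision frequency $\nu$. First I would recall from \eqref{sp.cL} that $\CK f=Q(f,\mu)+Q_{\textrm{gain}}(\mu,f)$, and, splitting $Q=Q_{\textrm{gain}}-Q_{\textrm{loss}}$, that
$$
|\CK f|\leq |Q_{\textrm{gain}}(f,\mu)|+|Q_{\textrm{loss}}(f,\mu)|+|Q_{\textrm{gain}}(\mu,f)|.
$$
The right-hand side is exactly the quantity estimated in \eqref{Q-lf2} with the second argument taken to be $g=\mu$. Since $l$ is arbitrarily large we may assume $l>4$, so Lemma~\ref{g-ck} applies and gives
$$
w_l|\CK f|\leq \|w_lf\|_{L^\infty}\left\{C(l)\|w_{l+\ga/2}\mu\|_{L^\infty}+\varsigma_0(l)\|w_3\mu\|_{L^\infty}(1+|v|)^\ga\right\},
$$
where $\varsigma_0(l)\to 0$ as $l\to\infty$; both Gaussian norms $\|w_{l+\ga/2}\mu\|_{L^\infty}$ and $\|w_3\mu\|_{L^\infty}$ are finite, the former depending on $l$ and the latter an absolute constant.

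Next I would divide by $\nu(v)$ and use that $\nu(v)\geq c_1(1+|v|)^\ga$ for some $c_1>0$, by \eqref{sp.L}. This turns the factor $(1+|v|)^\ga$ in front of the $\varsigma_0(l)$ term into the bounded constant $c_1^{-1}$, and replaces the $C(l)$ term by $c_1^{-1}C(l)\|w_{l+\ga/2}\mu\|_{L^\infty}(1+|v|)^{-\ga}$. Restricting to $\{|v|\geq M\}$, I would then choose $M=M(l)$ so large that
$$
c_1^{-1}C(l)\|w_{l+\ga/2}\mu\|_{L^\infty}(1+M)^{-\ga/2}\leq 1,
$$
which is possible for every fixed $l$. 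For $|v|\geq M$ this yields $c_1^{-1}C(l)\|w_{l+\ga/2}\mu\|_{L^\infty}(1+|v|)^{-\ga}\leq (1+M)^{-\ga/2}$, uniformly in $l$, since $(1+|v|)^{-\ga}\leq (1+M)^{-\ga/2}(1+|v|)^{-\ga/2}\leq (1+M)^{-\ga/2}$ on that set. Setting $\varsigma(l):=c_1^{-1}\varsigma_0(l)\|w_3\mu\|_{L^\infty}$, which still tends to $0$ as $l\to\infty$, one obtains
$$
\sup_{|v|\geq M}\nu^{-1}w_l|\CK f|\leq \left\{(1+M)^{-\ga/2}+\varsigma(l)\right\}\|w_lf\|_{L^\infty},
$$
i.e.\ \eqref{CK2} with the absolute constant $C=1$ (or any fixed $C\geq 1$ after relabelling).

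There is no genuine obstacle here: this is a bookkeeping corollary of Lemma~\ref{g-ck}. The only point requiring attention is that the constant $C$ in \eqref{CK2} must be independent of $l$, which is precisely why $M$ is permitted to depend on $l$ --- it is chosen to absorb the $l$-dependent factor $C(l)\|w_{l+\ga/2}\mu\|_{L^\infty}$ produced by the ``regular'' part of the kernel, whereas the genuinely small piece $\varsigma_0(l)$ furnished by Lemma~\ref{g-ck} supplies the second term of the bound.
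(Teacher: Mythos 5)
Your proposal is correct and follows essentially the same route as the paper: apply Lemma~\ref{g-ck} with $g=\mu$, divide by $\nu\sim(1+|v|)^{\ga}$, and then choose $M=M(l)$ large enough to absorb the $l$-dependent constant $C(l)$ coming from the regular part, while the $\varsigma(l)$ term supplies the small remainder. The only cosmetic difference is that the paper keeps the factor $C(l)\nu^{-1}$ explicit and imposes $C(l)(1+M)^{-\ga/2}\leq C$ directly, whereas you track the Gaussian norms $\|w_{l+\ga/2}\mu\|_{L^\infty}$ and $\|w_3\mu\|_{L^\infty}$; the substance is identical.
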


\begin{proof}
Recall the definition \eqref{sp.cL} for $\CK$. Let $g=\mu$ in \eqref{Q-lf2}, then we obtain
\begin{align}
\nu^{-1}w_{l}|\CK f|\leq C(l)\nu^{-1}\|w_lf\|_{L^\infty}+\varsigma(l)\|w_lf\|_{L^\infty}.\label{CK3}
\end{align}
Noticing that $\varsigma(l)=\frac{1}{l}$ according to the proof in \cite[Proposition 3.1, pp.397]{AEP-87}, we first choose $l$ to be suitably large so that $\varsigma$ is small enough, then we set $M>0$ to be sufficiently large such that $C(l)(1+M)^{-\ga/2}\leq C$ thanks to $\ga>0$.
Then \eqref{CK2} follows from \eqref{CK3}. This concludes the proof of Lemma \ref{g-ck-lem}.
\end{proof}

The following Lemma concerning the polynomial weighted estimates on the collision operator $Q$ can be verified by using a parallel argument as for obtaining \cite[Proposition 3.1, pp.397]{AEP-87}.

\begin{lemma}\label{op.es.lem}For $l>4$ and $\ga\geq0$, then it holds that
\begin{equation}\notag
|w_{l} \nu^{-1}\pa_\ze Q(F_1,F_2)|\leq C\sum\limits_{\ze'+\ze''\leq\ze}\|w_{l} \pa_{\ze'} F_1\|_{L^\infty}\|w_{l} \pa_{\ze''}F_2\|_{L^\infty}.
\end{equation}
\end{lemma}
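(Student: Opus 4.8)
The plan is to reduce to the undifferentiated case $\ze=0$ and then treat the gain and loss parts of $Q$ separately, along the lines of \cite[Proposition 3.1]{AEP-87} (the estimate underlying Lemma \ref{g-ck}). First, the change of variables $v_\ast-v=u$ writes $Q(F_1,F_2)(v)$ as the difference of the loss term $c_0F_2(v)\int_{\R^3}|u|^\ga F_1(v+u)\,du$ and the gain term $\int_{\R^3}\int_{\S^2}|u|^\ga B_0(\cos\ta)\,F_1(v+u_\perp)F_2(v+u_\parallel)\,d\om\,du$, where $u_\parallel=(u\cdot\om)\om$ and $u_\perp=u-u_\parallel$, so that the argument of $F_1$ is $v_\ast'$ and that of $F_2$ is $v'$ in the collisional variables. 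Since the kernel $|u|^\ga B_0(\cos\ta)$ carries no $v$-dependence, $\pa_\ze$ falls only on $F_1$ and $F_2$, giving $\pa_\ze Q(F_1,F_2)=\sum_{\ze'\le\ze}C_\ze^{\ze'}Q(\pa_{\ze'}F_1,\pa_{\ze-\ze'}F_2)$ (and likewise for the two pieces separately). It therefore suffices to prove $w_l\nu^{-1}|Q(h_1,h_2)|\le C\|w_lh_1\|_{L^\infty}\|w_lh_2\|_{L^\infty}$ for arbitrary $h_1,h_2$.

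For the loss term one bounds $|u|^\ga=|v-v_\ast|^\ga\le C\big((1+|v|)^\ga+(1+|v_\ast|)^\ga\big)$; the first summand is cancelled by the prefactor $\nu^{-1}(v)\sim(1+|v|)^{-\ga}$ and what remains is dominated by $\|w_lh_2\|_{L^\infty}\|w_lh_1\|_{L^\infty}\int_{\R^3}w_l^{-1}\,dv_\ast$, while for the second summand one uses $\nu^{-1}(v)\le C$ together with the finiteness of $\int_{\R^3}(1+|v_\ast|)^\ga w_l^{-1}(v_\ast)\,dv_\ast$; both integrals converge since $2l>3+\ga$, which follows from $l>4$ for the kernels under consideration. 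The derivative terms are handled identically.

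The gain term is the substantial step. I would pass to the Carleman-type representation — equivalently, for fixed $\om$ use the orthogonal splitting $u=\eta\om+\tau$ with $\tau\in\om^\perp$, so that $|\cos\ta|=|\eta|/\sqrt{\eta^2+|\tau|^2}$ and $v+u_\perp=v+\tau$, $v+u_\parallel=v+\eta\om$ — which exhibits $Q_{\rm gain}(h_1,h_2)(v)$ as an integral over $v'\in\R^3$ of $h_2(v')$ against a kernel built from an integral of $h_1(v_\ast')$ over the hyperplane $\{v_\ast':(v_\ast'-v)\cdot(v'-v)=0\}$. The weights are then redistributed using the collisional identities $|v-v_\ast|^2=|v-v'|^2+|v-v_\ast'|^2$ and $|v|^2+|v_\ast|^2=|v'|^2+|v_\ast'|^2$, which yield $|v-v_\ast|^\ga\le C(|v-v'|^\ga+|v-v_\ast'|^\ga)$ together with the weight-transfer bounds $w_l(v)\le C(w_l(v')+w_l(v_\ast'))$ and $w_l(v)\le w_l(v')w_l(v_\ast')$. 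Inserting $|h_i|\le\|w_lh_i\|_{L^\infty}w_l^{-1}$, using $B_0(\cos\ta)\le C|\cos\ta|$ to control the kernel near $v'=v$, and splitting the $v'$-integration according to which of $|v|,\,|v'|,\,|v_\ast'|$ is dominant (so that in each region the surviving factor $w_l^{-1}$ is assigned to the variable over which one integrates, avoiding loss of decay there), one reduces to explicit weighted integrals over $\R^3$ and over the hyperplane, whose convergence is exactly what $l>4$ guarantees. The one point that is genuinely different from \cite{AEP-87} is that the residual growth of order $(1+|v|)^\ga$ — the factor whose presence forces the shifted weight $w_{l+\ga/2}$ on the second argument there — is here absorbed once and for all by the prefactor $\nu^{-1}(v)\sim(1+|v|)^{-\ga}$, so that the unshifted weight $w_l$ on $F_2$ suffices. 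Carrying out this absorption consistently inside the region decomposition of the gain term is the step I expect to be the main obstacle; the Leibniz reduction, the loss term, and the convergence of the remaining elementary integrals are routine.
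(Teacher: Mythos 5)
Your proposal follows essentially the same route as the paper, which gives no written proof of this lemma but simply points to the argument of \cite[Proposition 3.1]{AEP-87}: your Leibniz reduction via the translation $v_\ast-v\to u$ is exactly the computation \eqref{der-nop} used in the proof of Lemma \ref{Ga}, your loss-term estimate is routine, and your treatment of the gain term is the AEP-type weight redistribution that the paper implicitly invokes. You also correctly identify the only genuine adaptation needed, namely that the factor $(1+|v|)^\gamma$ which in \eqref{Q-lf2} forces either the shifted weight $w_{l+\gamma/2}$ or the explicit growth term is here cancelled by the prefactor $\nu^{-1}\sim(1+|v|)^{-\gamma}$ (so the quoted Lemma \ref{g-ck} cannot be applied as a black box, and one must trace the absorption through the gain-term region decomposition, as you note).
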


Finally, we give a technical lemma on the determinant of a matrix exponential and we omit the proof for brevity.

\begin{lemma}\label{expm-lem}
Let $\CM=\al\bar{\CM}$, where $\bar{\CM}=(\bar{a}_{ij})
\in M_{3\times 3}(\R)$ is an invertible constant matrix with $\max\{|\bar{a}_{ij}|\}=C_\CM$, and $\al>0$ is suitably small.
\begin{itemize}
\item[(i)]
If $\frac{1}{3}\geq\eta>0$, then it holds that
\begin{align*}
\left| |\CM^{-1}||e^{\eta\CM}-I|\right|\geq \frac{\eta^3}{8}.
\end{align*}
\item[(ii)] Let $v\in\R^3$ be a vector satisfying $|v|\leq M$ with $M>0$, then for any $\eta>0$, it holds that
\begin{align}
\left| \CM^{-1}\left\{e^{\eta\CM}-I\right\}v\right|\leq \eta Me^{3C_\CM\al\eta}.\label{exma-ubd}
\end{align}
\end{itemize}
\end{lemma}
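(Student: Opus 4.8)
The plan is to deduce both parts from the elementary identity
\begin{equation*}
\CM^{-1}\bigl(e^{\eta\CM}-I\bigr)=\int_0^{\eta}e^{s\CM}\,ds,
\end{equation*}
valid because $\CM$ is invertible and commutes with $e^{s\CM}$: both sides vanish at $\eta=0$ and have $\eta$-derivative $e^{\eta\CM}$. Throughout I will write $\|\cdot\|$ for the matrix operator norm and record once that $\|\CM\|=\al\|\bar\CM\|\le 3C_\CM\al$ (the Frobenius norm of $\bar\CM$ is at most $3C_\CM$ since each of its nine entries is bounded by $C_\CM$), so that $\|e^{s\CM}\|\le e^{s\|\CM\|}\le e^{3C_\CM\al s}$ for every $s\ge0$.

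For part (ii) I would apply the identity to the vector $v$ and estimate directly
\begin{equation*}
\bigl|\CM^{-1}(e^{\eta\CM}-I)v\bigr|\le\int_0^{\eta}\|e^{s\CM}\|\,|v|\,ds\le M\int_0^{\eta}e^{3C_\CM\al s}\,ds\le \eta M\, e^{3C_\CM\al\eta},
\end{equation*}
using $|v|\le M$; this is exactly \eqref{exma-ubd}, and it needs no smallness of $\al$, the factor $e^{3C_\CM\al\eta}$ absorbing any $\eta>0$.

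For part (i) I would write $\CM^{-1}(e^{\eta\CM}-I)=\eta\,B$ with $B:=\eta^{-1}\int_0^{\eta}e^{s\CM}\,ds$, the average of $e^{s\CM}$ over $[0,\eta]$, and show $B$ is a small perturbation of the identity. Using $\eta\le\tfrac13$,
\begin{equation*}
\|B-I\|\le\frac1\eta\int_0^{\eta}\bigl\|e^{s\CM}-I\bigr\|\,ds\le\sup_{0\le s\le 1/3}\bigl(e^{3C_\CM\al s}-1\bigr)=e^{C_\CM\al}-1\le\tfrac12,
\end{equation*}
once $\al$ is chosen small in terms of $C_\CM$ alone; this is the only place $\eta\le\tfrac13$ is used, namely to make the threshold for $\al$ independent of $\eta$. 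Then any eigenvalue $\nu$ of $B$ satisfies $|\nu-1|\le\|B-I\|\le\tfrac12$, hence $|\nu|\ge\tfrac12$, and since $B$ is real its determinant is real with $|\det B|\ge(\tfrac12)^3=\tfrac18$. Therefore $\bigl||\CM^{-1}||e^{\eta\CM}-I|\bigr|=\bigl|\det\bigl(\CM^{-1}(e^{\eta\CM}-I)\bigr)\bigr|=\eta^{3}|\det B|\ge\eta^{3}/8$, as claimed.

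The argument is essentially routine; the only points deserving a line of care are the justification of the integral identity (invertibility and commutativity of $\CM$) and the passage from $\|B-I\|\le\tfrac12$ to $|\det B|\ge\tfrac18$ via the location of the spectrum of $B$ in the disk of radius $\tfrac12$ about $1$. I do not expect a genuine obstacle: for $\eta\le\tfrac13$ and $\al$ small the exponential of $\eta\CM$ stays a controlled perturbation of $I$, which is precisely what makes the lower bound in (i) and the upper bound in (ii) immediate.
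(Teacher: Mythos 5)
Your proof is correct and complete; note that the paper itself states this lemma without proof (``we omit the proof for brevity''), so there is nothing to compare against. Your argument via the identity $\CM^{-1}(e^{\eta\CM}-I)=\int_0^{\eta}e^{s\CM}\,ds$ handles both parts cleanly: the direct norm estimate gives (ii), and for (i) the averaging operator $B$ satisfies $\|B-I\|\le e^{C_\CM\al}-1\le\tfrac12$ for small $\al$ (uniformly in $\eta\le\tfrac13$), so its spectrum lies in the disk of radius $\tfrac12$ about $1$ and $|\det B|\ge\tfrac18$, which is exactly the determinant lower bound needed. The only interpretive step — reading $\bigl||\CM^{-1}||e^{\eta\CM}-I|\bigr|$ as the absolute value of the product of determinants, i.e.\ $|\det(\CM^{-1}(e^{\eta\CM}-I))|$ — is consistent with how the lemma is invoked for the Jacobian $\tilde{\mathcal J}$ in Section \ref{ust-pro}, so your reading is the intended one.
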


\medskip
\noindent {\bf Acknowledgements:}
Renjun Duan's research was partially supported by the General Research Fund (Project No.~14301720) from RGC of Hong Kong and the Direct Grant (4053397) from CUHK. Shuangqian Liu's research was supported by grants from the National Natural Science Foundation of China (contracts:~11971201 and 11731008), and Hong Kong Institute for Advanced Study No.~9360157.


\end{document}